\def\R{{\mathbb R}}
\def\E{{\mathbb E}}
\def\P{{\mathbb P}}
\def\N{{\mathbb N}}
\def\F{{\mathcal F}}
\def\B{{\mathcal B}}
\def\Pma{{\mathcal P}}
\def\shd{{\mathcal D}}
\def\1{{\mathds 1}}
\def\vphi{{\varphi}}
\newtheorem{theorem}{Theorem}[section]
\newtheorem{prop}[theorem]{Proposition}
\newtheorem{coro}[theorem]{Corollary}
\newtheorem{remark}[theorem]{Remark}
\newtheorem{example}[theorem]{Example}
\newtheorem{lemma}[theorem]{Lemma}
\newtheorem{hyp}[theorem]{Hypothesis}
\newtheorem{definition}[theorem]{Definition}
\newtheorem{property}[theorem]{Property}
\newtheorem{notation}[theorem]{Notation}
\renewcommand{\theequation}{\arabic{section}.\arabic{equation}}
\def \D{\mathbb D}
\def\R{\mathbb R}
\def\N{\mathbb N}
\def\E{\mathbb E}
\def\P{\mathbb P}
\def\Q{\mathbb Q}
\def\sha{{\cal A}}
\def\shb{{\cal B}}
\def\shc{{\cal C}}
\def\shd{{\cal D}}
\def\shf{{\cal F}}
\def\shg{{\cal G}}
\def\shh{{\cal H}}
\def\shm{{\cal M}}
\def\shn{{\cal N}}
\def\shl{{\cal L}}
\def\shp{{\cal P}}
\def\shs{{\cal S}}
\def\scrd{{\mathscr D}}
\author{
  {\sc Thibaut BOURDAIS}
	\thanks{ENSTA Paris, Institut Polytechnique de Paris.
		Unit\'e de Math\'ematiques Appliqu\'ees (UMA).
		E-mail:{ \tt thibaut.bourdais@ensta.fr}} 
	{\sc,}\ {\sc Nadia OUDJANE}
	\thanks{EDF R\&D,   and FiME (Laboratoire de Finance des March\'es de l'Energie
		(Dauphine, CREST,  EDF R\&D) www.fime-lab.org). 
		E-mail:{\tt  
			nadia.oudjane@edf.fr}}
	\ {\sc and}\ {\sc Francesco RUSSO} 
	\thanks{ENSTA Paris, Institut Polytechnique de Paris.
		Unit\'e de Math\'ematiques Appliqu\'ees (UMA). 
		E-mail:{\tt  francesco.russo@ensta.fr}.
}}
\date{March 2026}
\title{Exponential twist of probability measures: drift correction in term of a generalized gradient.}
\newcommand{\MBFigure}[6]{
	$\left. \right.$ \\
	\refstepcounter{figure}
	\addcontentsline{lof}{figure}{\numberline{\thefigure}{\ignorespaces #5}}
	\begin{center}
		\begin{minipage}{#1cm}
			\centerline{\includegraphics[width=#2cm,angle=#3]{#4}}
			\begin{center}
				\upshape{F\textsc{ig} \normal
				\end{center}
				size{\thefigure}. $-$} #5
		\end{center}
		\label{#6}
	\end{minipage}
\end{center}
$\left. \right.$ \\}
\begin{document}
\maketitle

\begin{abstract}
  In this paper we study the exponential twist, i.e.
 a path-integral exponential change of measure,
 of a Markovian reference probability measure $\P$.
 This type of transformation naturally appears in variational representation formulae originating from the theory of
 large deviations and can be interpreted in some cases, as the solution of a specific stochastic control problem.
 Under a very general Markovian assumption on $\P$, we fully characterize the exponential twist probability measure as the solution of a martingale problem and prove that it inherits the Markov property of the reference measure.
 The ''generator'' of the martingale problem shows
 a drift depending on a {\it generalized gradient} of some
  suitable {\it value function} $v$.
 \end{abstract}
 
\medskip\noindent {\bf Key words and phrases:}  
Stochastic control;  optimization;
exponential twist; generalized gradient; relative entropy.

\medskip\noindent  {\bf 2020  AMS-classification}: 60H10; 
60H30; 60J60; 65C05; 49L25;
35K58.

\medskip

\section{Introduction}


This paper focuses on exponential twist probability measures $\Q$ resulting from an exponential change of measure with respect to a Markovian reference probability measure $\P$, i.e.
\begin{equation}
	\label{eq:introOptiSolution}
	d\Q \propto e^{-\vphi} d\P,
\end{equation}
when $\vphi$ is a \textit{path-integral} functional of the form
\begin{equation} \label{eq:decPhig}
	\vphi(X) = \int_0^T f(r, X_r)dr +g(X_T),
\end{equation}
for measurable functions $f, g$.
When the reference probability $\P$ is the Wiener measure,
 the properties of $\Q$ have been extensively studied in
 \cite{EntropyWeighted}. 
 In the case of discrete time Markov models with finite state space, the stability of the Markov property by the exponential twist transformation \eqref{eq:introOptiSolution} was already pointed out in \cite{KLQ, KLQPowerDemand, ExpoTwistGeneral}. In this paper we extend these results
to all Markovian models, including
 c\`adl\`ag (possibly singular) Markovian SDEs and provide a precise characterization of the generator associated to the martingale problem verified by the probability measure $\Q$.

  Exponential twist probability measures of the form \eqref{eq:introOptiSolution} are intimately connected to various application domains.
It appears naturally in variational representation formulae in relation to the theory of large deviations \cite{SanovLargeDeviations, DupuisEllisLargeDeviations}.
In fact we have the variational formula 
 \begin{equation}
 	\label{eq:variationalFormulation}
 	- \log \int_\Omega e^{- \vphi(\omega)}d\P(\omega) = \inf_{\Q \in \Pma(\Omega)} \int_\Omega \vphi(\omega)d\Q(\omega) + H(\Q|\P),
 \end{equation}
  see Proposition 1.4.2 in \cite{DupuisEllisLargeDeviations},
 where
 $\Pma(\Omega)$ is the space of all probability measures
 on $(\Omega, \F)$ and $H$ is the relative entropy of $\Q$
 with respect to $\P$, see Definition
\ref{def:klDiv}. The minimum in \eqref{eq:variationalFormulation} is achieved for the exponential twist probability measure $d\Q \propto e^{-\vphi}d\P$ and 
$\Q$ is said to be a \textit{solution} to the optimization problem \eqref{eq:variationalFormulation}.

In fact, the characterization results for the exponential twist measure \eqref{eq:introOptiSolution} provided in the present paper
apply to the framework of non-linear optimization on the space of probability measures (often related to mean-field optimization \cite{MFOptimization})
 stated as
\begin{equation}
	\label{eq:generalizedCharacterization}
	\inf_{\Q \in \shp(\Omega)} F\left(\E^\Q[\vphi(X)]\right) + H(\Q | \P),
\end{equation}
where $F : \R \rightarrow \R$ is a differentiable convex function and 
$\vphi$ is given by \eqref{eq:decPhig}.
Indeed, assume that Problem \eqref{eq:generalizedCharacterization} admits a solution $\tilde \Q$. Then $\tilde \Q$ is also solution of
\begin{equation}
	\label{eq:linearizedProblem}
	\inf_{\Q \in \shp(\Omega)} \E^\Q[\tilde\vphi(X)] + H(\Q | \P),
\end{equation}
where $\tilde\vphi(X) := F'\left(\E^{\tilde \Q}[\vphi(X)]\right)\vphi(X)$ (see Lemma \ref{lemma:equivalenceOpti})
and $\Q$ is an exponential twist measure of the form \eqref{eq:introOptiSolution} with $\vphi$ replaced by $ \tilde \vphi$.
Hence, any solution of Problem \eqref{eq:generalizedCharacterization} falls into the framework of the present paper.
Optimization programs of the form \eqref{eq:generalizedCharacterization} appear for example in \cite{KLQ, KLQPowerDemand, seguret, seguret2} for demand side management in power systems.

 The first crucial observation in this paper is that the only assumption on $\P$ to be
 Markovian
determines a natural domain $\shd(\P)$ of an (intrinsic) Markovian martingale
problem, see Definition \ref{def:domainP}.
This also identifies a map $a=a^\P: \shd(\P) \rightarrow L^0(\P)$
(the {\it generator}),
where $L^0(\P) = \{\phi:[0,T] \times \R^d \rightarrow \R \vert \phi \in
L^0(dt \otimes d\P_{X_t})$,
 is the natural basic space for our developments.
In particular, for every $\phi \in \shd(\P),$
$\phi(t,X_t) = \phi(0,X_0) + \int_0^t a^\P\phi(r,X_r) dr + M[\phi]$,
where $M[\phi]$ is a locally square integrable local martingale.
 When $\P$ is a solution  of
 a càdlàg Markovian martingale problem of domain $\shd$
 with respect to a (generally PDE or PIDE) map $\shl$
 (see Definition \ref{def:martingaleProblem}),
necessarily $\shd \subset \shd(\P)$ and
the restriction of $\shl$ is a restriction of $\shd(\P)$
to $\shd$. This includes the case when $\P$ is the law of a
stochastic differential equation (SDE) with jumps, in the case of
singular (distributional) coefficients.
The notion of martingale problem was introduced by
 D.W. Stroock and S.R.S Varadhan in the seminal papers
 \cite{sv1_1969, sv2_1969} and has been exploited extensively
 starting from \cite{stroock}. In that framework $\shd$
 was given, for instance by $C^{1,2}$-functions, possibly bounded
 or with compact support.

 In our approach, there is no issue of well-posedness for the considered
 martingale
problems. 
 Given a general
 Markovian probability $\P$, the probability $\Q$,
 defined in \eqref{eq:introOptiSolution}, 
 is still Markovian by Proposition \ref{lemma:QMarkovProp}.
The  main objective of the paper is then to 
 characterize       
 the Markovian martingale problem,
in particular to determine the generator $a^\Q$ defined
in the domain $\shd(\Q)$ in a precise way, exhibiting a class
of important examples.
In Section \ref{sec:characterization},
we  identify the so called {\it Ideal Property} \ref{cond:idealCondition}
associated with the reference probability measure $\P$ and a functional domain $\shd$, in relation of what we call the {\it intrinsic value function} $v$
introduced in Definition \ref{definitionV} as a class of functions
defined on $[0,T]\times \R^d$.
 One difficulty in the paper comes from the fact that, in general, the natural domain
 $\shd(\P)$ is not an algebra.
 For this reason, the role of  Theorem 
 \ref{thm:IdealCond} is relevant: it shows that $\P$ verifies
 the Ideal Property
 with respect to every subalgebra $\shd$ of $\shd(\P)$.
 Suppose that a martingale problem is fulfilled for $(\P, \shd)$
and that the Ideal Property with respect to $\shd$ is fulfilled.
Theorem \ref{th:markovDrift} 
states that 
$ \shd \subset \shd(\P) \subset \shd(\Q)$ and
the ''generator'' $a^{\Q}$ of the corresponding Markovian martingale problem problem
is explicitly expressed as follows.
For all test function $\phi \in \shd$, $a^{\Q}(\phi) = a^\P(\phi) + \Gamma^v(\phi)/v$,
and $\Gamma^v(\phi)/v$ is a correction term identified via a Girsanov's change of measure, associated with the intrinsic value function $v$.

In Section \ref{sec:extension}, we further specify the map
$ \shd \ni \phi \mapsto \Gamma^v(\phi)$ in the integro-differential
case. In particular Proposition \ref{prop:GammaContinuous} states that it can be expressed
as the sum of an integral term corresponding to the jumps contribution, and a
\textit{generalized gradient}  $\Gamma^{v,c}$ of $v$.
On the other hand Corollary \ref{cor:P1PDE} shows that if there is a solution $w \in C^{0,1}$
of $a^\P w = f w$, $w$ is a version of the intrinsic value function
and the generalized gradient can be expressed as $\Gamma^{v,c}(\phi) =
(\nabla_x \phi)^\top \sigma \sigma^\top\nabla_x v$,
independently on the fact that the canonical process is a semimartingale.
Then, in Proposition \ref{prop:extensionGammaClosure}, we extend the ``out of jumps component''
$\Gamma^{v,c}$ of the operator $\Gamma^{v}$
to a larger space $\scrd$ including the identity function $id$ and other test functions
outside the domain $\shd$. This extension allows
 us, in Proposition \ref{prop:idGamma},  to express the change of probability measure as a drift
 modification depending only on $\Gamma^{v,c}(id)$.
In particular,
even when the initial drift is a Schwartz distribution,
an additional measurable drift term appears, extending the notional term 
$ \sigma \sigma^\top\nabla_x v$ when $\nabla_x v$ does not exist.

Section \ref{sec:checkingHypV} 
is devoted by the verification of Ideal Property as soon as $\shd$ is an algebra: this
is done via delicate ``Dellacherie-Meyer'' type arguments.

In Section \ref{sec:applications}, we instantiate our characterization result (Theorem \ref{th:markovDrift}) on several specific examples.
We first study the case where the reference probability measure $\P$ is solution to a martingale problem associated to a jump diffusion. In this situation we are able to fully characterize in Proposition \ref{prop:markovianDriftJumps} the drift of the canonical process under the optimal probability $\Q$, as well as its jump intensity,
as Markovian functions of the current state. We emphasize  that we do not require any integrability condition of the underlying process with respect to $\P$. We then apply these results to the case of (even very singular) Brownian diffusions
deriving in Corollary \ref{coro:markovianDrift}   
the  drift correction related to $\Q$,  as a Markovian function
and we characterize it by means of a generalized gradient.
We finally consider more irregular examples in Proposition \ref{prop:markovianDriftDistri}, where the drift $b$ related to reference probability $\P$, is even a Schwartz distribution.

In a companion paper \cite{BOROptimi2023} and in its complete version \cite{BOROptimi2023Complete},
we make use of 
 Corollary \ref{coro:markovianDriftSDE}
 which is a consequence of  Corollary \ref{coro:markovianDrift}.
This is a basic tool
which allows us to develop an algorithm that provides
Markovian controls  approximating
the solution to a large class
of stochastic control problems.
In that framework no particular hypotheses (in particular no integrability) are required for the cost functions $f$ and $g$, beyond measurability
and a lower bound.


The paper is organized as follows. After a preliminary section of notations, in Section \ref{sec:characterization},
we characterize the general Markovian martingale problem associated with the given reference probability,
and we provide some suitable calculus with respect to a related generator and ''carré du champs operator''.
We also characterize in full generality the Markovian martingale problem verified by the
exponential twist probability. We also introduce the {\it Ideal Property} associated with
a linear subspace $\shd$ of the general Markov domain. 
In Section \ref{sec:extension}, we identify a continuous component of the carré du champs operator
and we provide an extension in full generality.
In Section \ref{sec:checkingHypV} we show that the Ideal Property is always verified
with respect to any Markovian probability and each subalgebra  $\shd$  of the
Markovian domain.

Section \ref{sec:applications} contains various applications to general jump-diffusion processes
and solutions to SDE with distributional drift.

\section{Notations and definitions}
\label{sec:notation}

\setcounter{equation}{0}

In this section we introduce the basic notions and notations used throughout this document. In what follows, $T  > 0$ will be a fixed time horizon.
\begin{itemize}
	\item All vectors $x \in \R^d$ are column vectors. Given $x \in \R^d$, $|x|$ will denote its Euclidean norm.
	
	\item Given a matrix $A \in \R^{d \times d}$, $\|A\| := \sqrt {Tr[AA^\top]}$ will denote its Frobenius norm.
	
	\item For any $x \in \R^d$, $\delta_x$ will denote the Dirac mass in $x$.
	
	\item $\shc^{i, j} := C^{i, j}([0, T] \times \R^d, \R)$ will be the
          $F$-space of real valued functions on $[0, T] \times\R^d$ that are continuous together with their time and space derivatives up to order $i$ and $j$ respectively. It is endowed with the topology of uniform convergence on compact sets.
	
	\item $\shc^{i, j}_b := C_b^{i, j}([0, T] \times \R^d, \R)$ will be
          the Banach space of functions belonging to $\shc^{i, j}$ which
          are bounded  together with their time and space derivatives up to order $i$ and $j$ respectively. It is endowed with the topology of uniform convergence.
	
	\item For any topological metric spaces $E$ and $F,$ $\mathcal{B}(E)$ will denote the Borel $\sigma$-field of $E.$ $C(E, F)$ (resp. $C_b(E, F)$, $\B(E, F)$, $\B_b(E, F)$)
          will denote the linear space of functions from $E$ to $F$ that are continuous (resp. bounded continuous, Borel, Borel bounded).
          If $E = F$ we will simply denote $C(E)$ (resp. $C_b(E)$, $\B(E)$, $\shb_b(E)$) for $C(E, E)$ (resp. $C_b(E, E)$, $\B(E, E)$, $\B_b(E, E)$).
          $\Pma(E)$ will denote the set of Borel probability measures on $E$. Given $\P \in \Pma(E),$ $\E^\P$ will denote the expectation with respect to (w.r.t.) $\P.$
 	
	\item Given $\phi \in C^{1, 2}([0, T] \times \R^d, \R)$, $\partial_t\phi$, $\nabla_x \phi$ and $\nabla_x^2\phi$ will denote respectively the partial derivative of $\phi$ with respect to $t \in [0, T]$, its gradient and its Hessian matrix  w.r.t. $x \in \R^d$.
	Given any bounded function $\phi$ we
	will denote by $\Vert \phi \Vert_\infty$ its supremum.
	
	\item For $x \in \R^d$, $id(x) := (id_i(x))_{1 \le i \le d} := (x_i)_{1 \le i \le d}$ will denote the identity on $\R^d$.
	
	\item 
	Given $0 \le t \le T$, $D([t, T], \R^d)$
	will denote  of c\`adl\`ag functions defined on $[t, T]$
	with values in $\R^d$.
	In the whole paper   $\Omega$ will denote space
	$D([0,T])$.
	For any $t \in [0, T] $ we denote by $X_t : \omega \in \Omega \mapsto \omega_t$ the coordinate mapping on $\Omega.$ We introduce the $\sigma$-field $\F := \sigma(X_r, 0 \le r \le T)$. On the measurable space $(\Omega, \F),$ we introduce the \textit{canonical process} $X : (t,\omega) \in ([0, T] \times \Omega, \B([0, T])\otimes \F) \mapsto X_t(\omega) = \omega_t \in (\R^d, \B(\R^d))$.
	
	We endow $(\Omega, \F)$ with the right-continuous filtration $\F_t := \underset{t < s \le T}{\bigcap}\sigma(X_r, 0 \le r \le s).$ The filtered space
	$(\Omega, \F, (\F_t))$ will be called the \textit{canonical space}; for the sake of brevity, we denote $(\F_t)_{t \in [0, T]}$ by $(\F_t)$.
	
	For $0 \le t \le u \le T$, we denote
  $\shf_{t, u} := \sigma\left(X_r, t \le r \le u\right)$ and, if $u < T$, \\
   $\shf_{t, u+} := \underset{n > 0}{\bigcap}\sigma\left(X_r, t \le r \le u + \frac{1}{n}\right)$.
   \item     $ \shp(\Omega)$ will denote the space of all the probabilities
        on $(\Omega, \shf)$. Since $\Omega$ is a separable Banach space,
  $\shf$ coincided with the Borel $\sigma$-field of $\Omega$.
        
	\item Given $\P \in \shp(\Omega)$ and a generic $\sigma$-field $\shg$ on $\Omega$, $\shg^\P$ will denote the $\P$-completion of $\shg$.
	
	\item Given $\P \in \shp(\Omega)$, $\D^{ucp}(\P)$ will
          denote the space of all c\`adl\`ag adapted processes (indexed by
          $[0,T]$) endowed with
      topology of the uniform convergence in probability (u.c.p.) topology under $\P$.
    \item 
      A process $(X_t)$ will be said locally square integrable
      if there is an increasing sequence of stopping times $(\tau_n)$
      diverging to $+\infty$ such that
      $\sup_t \vert X_{\tau_n \wedge t}\vert$ is square integrable.
      Given $\P \in \shp(\Omega)$, $\mathcal{H}^2_{loc}(\P)$ will denote the space of locally square-integrable martingales. Given $M, N \in \mathcal{H}^2_{loc}(\P),$
  $\langle M, N \rangle$ will denote their predictable \textit{(angle) bracket}.
  If $M = N,$ we will use the notation
  $\langle M\rangle.$

  We also denote $Pos(\langle M, N \rangle) := \frac{1}{4}\langle M + N \rangle$ and $Neg(\langle M, N \rangle) := \frac{1}{4}\langle M - N \rangle$.
  
	\item Given $\P \in \shp(\Omega)$, $\sha_{loc}(\P)$ will denote the set of c\`adl\`ag processes with $\P$-locally integrable variation.
	 
	\item Equality between stochastic processes are in the sense of \textit{indistinguishability}.
	
	\item Throughout the paper we will use the notion of random measures
          and their associated {\it compensator}. For a detailed discussion on this topic as well as some unexplained notations we refer to Chapter II and Chapter III in \cite{JacodShiryaev}.
          In particular, the \textit{compensator}
of a random measure is introduced in
resp. Theorem 1.8, Chapter II, 
\cite{JacodShiryaev}. We also make use of the compensator
of bounded variation process, which is introduced in Theorem 3.18, Chapter I
of \cite{JacodShiryaev}.

\item We will work with the convention that $\inf \emptyset = + \infty$. In particular, any hitting time $\tau$ of a Borel set by a stochastic process defined on $[0, T]$ will have values in $[0, T] \cup \{+ \infty\}$.
	
\end{itemize}

\begin{definition} (Relative entropy).
  \label{def:klDiv}
  Let $\P, \Q \in \Pma(\Omega).$ The \textit{relative entropy} $H(\Q | \P)$ between the measures $\P$ and $\Q$ is defined by
	\begin{equation}
		\label{eq:relativeEntropy}
		H(\Q | \P) :=
		\left\{
		\begin{aligned}
			& \E^{\Q}\left[\log \frac{d\Q}{d\P}\right] &\text{if $\Q \ll \P$}\\
			& + \infty &\text{otherwise,}
		\end{aligned}
		\right.
	\end{equation}
	with the convention $\log(0/0) = 0$.
\end{definition}
\begin{remark}
	\label{rmk:relativeEntropy}
	The relative entropy $H$ fulfills the following properties 
	for which we refer to \cite{DupuisEllisLargeDeviations} Lemma 1.4.3.
	\begin{enumerate}
		\item
		$H$ is non negative and jointly convex, that is for all $\P_1, \P_2, \Q_1, \Q_2 \in \Pma(\Omega)$, for all $\lambda \in [0, 1]$, $H(\lambda \Q_1 + (1 - \lambda) \Q_2 | \lambda \P_1 + (1 - \lambda)\P_2) \le \lambda H(\Q_1 | \P_1) + (1 - \lambda)H(\Q_2| \P_2)$.
		\item $(\P, \Q) \mapsto H(\Q | \P)$ is lower semicontinuous with respect to the weak convergence on Polish spaces.
	\end{enumerate}
\end{remark}
We introduce here a significant space of
      (equivalence classes)
      of Borel functions, associated with a given probability $\P$.

      \begin{notation} \label{rmk:v}
		\begin{equation}
			\label{eq:defL0}
			L^0 := L^0(\P) = \left\{\phi \in \shb([0, T] \times \R^d, \R)~:~\int_0^T |\phi(r, X_r)|dr < + \infty \quad
                          \P\text{-a.s.}\right\},		\end{equation}
		which corresponds to the classical space $L^0([0,T] \times \R^d, dt \otimes d\P_{X_t}),$
                where $\P_{X_t}$ is the (marginal) law of $X_t$ under $\P$.
                With a slight abuse of notations $L^0$ can be seen as a linear space of equivalence classes, where the equivalence is given
                by the equality up to  a  $dt \otimes d\P_{X_t}$ null set.
              \end{notation}
              \begin{remark} \label{rmk:L^0}
                We have $L^0(\P) = L^0(\Q)$ if
                $P$ and $Q$ are equivalent probabilities.
                \end{remark}
              \begin{definition}
	(Lévy kernel).
	\label{def:levyKernel}
	$L : [0, T] \times \R^d \times \shb(\R^d)$ is called a
        {\it (deterministic) Lévy kernel} if it satisfies the following.
	\begin{enumerate}
        \item For all $t \in [0,T], x  \in \R^d$,  $L(t, x, .)$ is a non-negative Borel measure on $\R^d$ such that $L(t, x, \{0\}) = 0$,
which  is  $\sigma$-finite  on $\R^d \backslash \{0\}$;
        	\item $(t, x) \mapsto \int_A (1 \wedge |q|^2)L(t, x, dq)$ is Borel  and bounded for all $A \in \B(\R^d)$.
                  	\end{enumerate}
                      \end{definition}
                      In the sequel we will often postulate the following hypothesis.
                      \begin{hyp}
	\label{hyp:compensator}
	(Compensator).
	The $\P$-compensator $\nu^{X, \P}$ of the jump measure $\mu^X$ of $X$ satisfies $\nu^{X, \P}(X, dt, dq) = dtL(t, X_{t-}, dq),$ where $L$ is a deterministic
        Lévy kernel in the sense of Definition \ref{def:levyKernel}.

      \end{hyp}
                      
\section{Characterization of the exponential twist measure}
\label{sec:characterization}
\setcounter{equation}{0}

\subsection{Martingale problem and Markov domain}

In the paper we will often use the notion of martingale problem.
Given a measurable process $Y$ we say that it admits
a càdlàg modification (with respect to $\P$)
if there is a càdlàg measurable process
$\tilde Y$ such that $Y_t(\omega) = \tilde Y_t(\omega)$
for almost all $t$, for $\P$-almost all $\omega$.
\begin{definition}
	\label{def:martingaleProblem}
	(Martingale problem).
	Let $a : \shd \subset \B([0, T] \times \R^d, \R) \rightarrow \B([0, T] \times \R^d, \R)$ be a linear operator.
	Let $\mu \in \Pma(\R^d)$. We say that a probability measure $\P \in \Pma(\Omega)$ is solution of the \textit{martingale problem associated to
		$(\shd, a, \mu)$} if
	
	\begin{enumerate}
		\item $\mathcal{L}^\P(X_0) = \mu$;
		\item for every $\phi \in \shd$ the process
		\begin{equation}
			\label{eq:MphiDef}
			M[\phi] := \phi(\cdot, X_\cdot) - \phi(0, X_0) - \int_0^\cdot a(\phi)(r, X_r)dr
                      \end{equation}
		has a c\`adl\`ag modification which is a local martingale under $\P$.
	\end{enumerate}
\end{definition}

We will moreover assume that the reference probability measure $\P$ 
 has the Markov property
 below.
\begin{hyp}
	\label{hyp:markovProp}
	$\P$ satisfies the Markov property
	\begin{equation}
		\label{eq:markovPropDef}
		\E^{\P}[F((X_u)_{u \in [t, T]})|\F_t] = \E^{\P}[F((X_u)_{u \in [t, T]})|X_t],
	\end{equation}
	for all $F \in \B_b(D[t, T], \R)$, $t \in [0, T]$.
      \end{hyp}
      We introduce below the notion of Markov domain and generators
      directly related
      to the Markov property.

\begin{definition}
	\label{def:domainP}
	(Markov domain $\shd(\P)$ and associated generator  $a^\P$).
        \begin{itemize}
        \item A Borel
          function $\phi : [0, T] \times \R^d \rightarrow \R$ is an element of the Markov domain $\shd(\P)$ if
          there exists a Borel
        function $\chi \in \shb([0, T] \times \R^d, \R)$ such that the process
	\begin{equation}
		\label{eq:defMPhiP}
		M[\phi] := \phi(\cdot,X ) - \phi(0, X_0) - \int_0^\cdot \chi(r, X_r)dr,
              \end{equation}
	has a c\`adl\`ag modification in $\shh_{loc}^2(\P)$.
        That modification will still  be denoted $M[\phi]$.
\item
        In this way also $(\phi(\cdot,X))$ admits a càdlàg modification,
        which (when there is no ambiguity) will still denoted by
        $(\phi(\cdot,X_\cdot))$.
   \item      We we will also denote
        $a^\P: \chi \mapsto a^\P(\phi) := \chi$.
\end{itemize}
      \end{definition}

 From now on we will make use of the linear space $L^0:= L^0(\P)$ defined in Notation \ref{eq:defL0}.
                \begin{remark} \label{rmk:MP}
	\begin{enumerate}
        \item $a^\P(\phi)$ defines a $dt \otimes d\P_{X_t}$-unique element of
$L^0$.
Indeed assume that there exist two elements $\chi_1$ and $\chi_2$ of $\shb([0, T] \times \R^d, \R)$ such that \eqref{eq:defMPhiP} holds for $\chi = \chi_1$ or $\chi_2$. Clearly $\phi(\cdot, X_\cdot)$ is a special semimartingale under $\P$ and uniqueness of the decomposition of special semimartingales immediately yields $\int_0^t \chi_1(r, X_r)dr = \int_0^t \chi_2(r, X_r)dr$ $\P$-a.s. for all $t \in [0, T]$, that is $\chi_1 = \chi_2$ $dt \otimes d\P_{X_t}$-a.e.
\item Also if $\phi^1 = \phi^2$ in $L^0$ then $M[\phi^1] = M[\phi^2]$
  up to indistinguishability.
\item If $\phi \in \shd(\P)$ we will denote by
  $(\phi_{-}(t,X_t))$ the (càglad) process
  $\lim_{s \rightarrow t-} \phi(t,X_t)$.
  
\item $\P$ is (naturally) the solution of the martingale problem associated with $(\shd(\P), \sha^\P)$.
\item $\shd(\P)$ is a linear space.
  If $\lambda, \mu \in \R$ and $\phi^1, \phi^2 \in \shd(\P)$
  then
  $M[\lambda \phi^1 + \mu \phi^2] =  \lambda M[\phi^1] + \mu M[\phi^2]$.
                \item Assume that $\P$ is solution of a martingale problem associated to $(\shd, a)$ in the sense of Definition \ref{def:martingaleProblem}.
 If $\P$ fulfills the martingale problem
  with respect to $(\shd, a)$, then
 $\shd  \subset \shd(\P)$ and $a$ is a restriction
 of $a^\P$ to $\shd$.

\item 	When $\shd \subset \shd(\P)$ is a linear subspace of $\shd(\P)$ which is also an algebra ({\it i.e.} stable by multiplication),
  $\shd$ will be called a \textbf{subalgebra} of $\shd(\P)$.
\end{enumerate}
\end{remark}

      \subsection{The exponential twist measure and its Markov property}
        
We consider now $f, g$ verifying Hypothesis \ref{hyp:costFunctions} below.
\begin{hyp}
	\label{hyp:costFunctions}
	(Cost functions). $f \in \B([0, T] \times \R^d, \R)$, $g \in \B(\R^d, \R)$ and $f, g \ge 0$ such that
       $f \in L^0(\P)$.
      \end{hyp}
Hypothesis \ref{hyp:costFunctions} will also be in force in the whole paper.
             \begin{remark} \label{rmk:costFunctions}
Without restriction of generality, Hypothesis \ref{hyp:costFunctions}
can be relaxed supposing $f, g$ to be lower bounded.
\end{remark}

As mentioned earlier,
in this paper we aim at characterizing  the exponential twist $\Q$ defined by
\begin{equation} \label{eq:Qstar}
  d\Q := D_Td\P,
  \end{equation}
  where
  
\begin{equation} \label{eq:optimalDensity}
  D_T := \frac{\exp\left(-\int_0^T f(r, X_r)dr - g(X_T)\right)}{\E^{\P}\left[\exp\left(-\int_0^T f(r, X_r)dr - g(X_T)\right)\right]}.
  \end{equation}

  
A first important observation concerns the fact that the exponential twist $\Q$
conserves the Markov property, i.e. $\Q$
still fulfills Hypothesis \ref{hyp:markovProp}.
  \begin{prop}
       \label{lemma:QMarkovProp} 
        Let $\P$ our reference probability  supposed to fulfill
        Markov property Hypothesis \ref{hyp:markovProp}. 
        In this lemma we do not necessarily suppose
        that $g \ge 0$ but only that
       $ \exp\left(- g(X_T)\right) \in L^1(\P)$.
        
        Then, the probability $\Q$  defined by (\ref{eq:Qstar}),
 also verifies the same Markov property.
\end{prop}
\begin{proof}
	Let $t \in [0, T]$ and $F \in \B_b(D([t, T], \R^d), \R)$. It holds
	\begin{equation}
		\label{eq:formulaCondExp}
		\E^{\Q}\left[F\left(\left(X_r\right)_{r \in [t, T]}\right)\middle | \F_t\right] = \frac{\E^{\P}\left[F\left(\left(X_r\right)_{r \in [t, T]}\right)\frac{d\Q}{d\P}\middle | \F_t\right]}{\E^{\P}\left[\frac{d\Q}{d\P}\middle | \F_t\right]}.
	\end{equation}
	Then
	\begin{equation}
		\label{eq:condExp1}
		\begin{aligned}
			\E^{\P}\left[\frac{d\Q}{d\P}\middle | \F_t\right] & = \E^{\P}\left[\exp\left(-\int_0^Tf(r, X_r)dr - g(X_T)\right) \middle |  \F_t\right]\\
			& = \exp\left(-\int_0^t f(r, X_r)dr\right)\E^\P\left[\exp\left(-\int_t^T f(r, X_r)dr - g(X_T)\right)\middle | \F_t\right]\\
			& = \exp\left(-\int_0^t f(r, X_r)dr\right)\E^\P\left[\exp\left(- \int_t^T f(r, X_r)dr - g(X_T)\right)\middle | X_t\right],
		\end{aligned}
	\end{equation}
by the validity of the Markov property  for $\P$.
	\begin{equation}
		\label{eq:condExp2}
		\begin{aligned}
			&\E^{\P}\left[F\left(\left(X_r\right)_{r \in [t, T]}\right)\frac{d\Q}{d\P}\middle | \F_t\right]\\ =&\exp\left(-\int_0^t f(r, X_r)dr\right) \E^{\P}\left[F\left(\left(X_r\right)_{r \in [t, T]}\right)\exp\left(- \int_t^T f(r, X_r)dr - g(X_T)\right)\middle | X_t\right].
		\end{aligned}
              \end{equation}
	Combining \eqref{eq:condExp1} and \eqref{eq:condExp2} with \eqref{eq:formulaCondExp}, we get
	\begin{equation}
		\label{eq:firstEqQ}
		\E^{\Q}\left[F\left(\left(X_r\right)_{r \in [t, T]}\right)\middle | \F_t\right] = \frac{\E^{\P}\left[F\left(\left(X_r\right)_{r \in [t, T]}\right)\exp\left(- \int_t^T f(r, X_r)dr - g(X_T)\right)\middle | X_t\right]}{\E^\P\left[\exp\left(- \int_t^T f(r, X_r)dr - g(X_T)\right)\middle | X_t\right]}.
	\end{equation}
	This concludes the proof.
\end{proof}

\subsection{The dynamics of the canonical process under the exponential twist measure}

We investigate now the dynamics of the canonical process $X$ under $\Q$, e.g. which martingale problem is fulfilled by $\Q$.

Since $\P$ verifies Hypothesis \ref{hyp:markovProp} (Markov property), for all $t \in [0, T]$, we have
\begin{equation}
	\label{eq:densityMarkovProp}
	\E^\P[D_T \vert \shf_t] = \frac{\exp\left(-\int_0^t f(r, X_r)dr\right) }{\E^\P\left[\exp\left(- \int_0^T f(r, X_r)dr - g(X_T)\right)\right]} \E^\P\left[\exp\left(-\int_t^Tf(r, X_r)dr - g(X_T)\right)\middle | X_t\right].
      \end{equation}
      We introduce now the useful notation
\begin{equation}
	\label{eq:definitionU}
	U^0_\cdot := \int_0^\cdot f(r, X_r)dr.
      \end{equation}
Below we define three significant processes playing
       a fundamental role in the sequel.
      
      \begin{notation} \label{not:MartD}
        \begin{enumerate}
\item We will denote by $D:= (D_t)_{t \in [0,T]}$ the càdlàg version of the martingale
$(\E^\P[D_T \vert \shf_t])$, see \eqref{eq:densityMarkovProp}.
\item 
      Below we define
\begin{equation} \label{eq:VD}
  V_t := \E^\P\left (\exp(- U^0_T - g(X_T))\right)
  \exp\left(U^0_t\right)D_t, \quad   \ t \in [0,T].
\end{equation}
\item We set
      \begin{equation}
		\label{eq:MVBis}
		M_t := V_t - V_0 - \int_0^t f(r, X_r) V_r dr, \ t \in[0,T].
              \end{equation}
\end{enumerate}
\end{notation}
  
\begin{remark} \label{rmk:MVD}
By \eqref{eq:densityMarkovProp} and \eqref{eq:VD} 
  we observe that
              $$ \E^\P(D_0) = 1, \E^\P(V_0) =  \E^\P(\exp(-U^0_T - g(X_T)), $$
              so that \eqref{eq:VD} becomes
            \begin{equation} \label{eq:VDBis}
	V_t = \E^\P(V_0)   \exp\left(U^0_t \right)D_t, \quad   \ t \in [0,T].
      \end{equation}
  By integration by parts, using \eqref{eq:VDBis} and \eqref{eq:MVBis}, we easily obtain that
	\begin{equation} \label{eq:MVTer}
		M_t =   \int_0^t \E(V_0)  \exp(U_r^0)dD_r, \ t \in [0,T].
	\end{equation}
        and
        \begin{equation} \label{eq:MVTerBis}
		D_t =   \int_0^t \frac{1}{\E(V_0)}  \exp(-U_r^0)dM_r, \ t \in [0,T].
	\end{equation}
      \end{remark}

      \begin{remark} \label{rmk:Function_v}
 By Proposition 5.1 in \cite{MimickingItoGeneral}
there exists a Borel function $v : [0, T] \times \R^d \rightarrow \R$ such that
\begin{equation}
	\label{eq:representationV}
	v(t, X_t)
        = \E^\P\left[\exp\left(-\int_t^Tf(r, X_r)dr - g(X_T)\right)\middle | X_t\right]\quad dt \otimes d\P \text{-a.e.}
      \end{equation}

\end{remark}
\begin{definition} \label{definitionV} 
  A Borel function $v$ fulfilling \eqref{eq:representationV} 
  will be called {\bf intrinsic value function}.
  It is  uniquely defined as  element of $L^0(\P)$.
  Taking into account Remark \ref{rmk:Function_v} is bounded, $v$
  can be obviously chosen to be bounded and strictly positive. 
  \end{definition}

\begin{prop} \label{prop:MarkovMart}
  Let $\P$ be a probability measure fulfilling the Markov property, i.e. Hypothesis \ref{hyp:markovProp}. Let $v$
  be an intrinsic value function defined in Definition \ref{definitionV}.
       We have the following.      
          \begin{enumerate}
 \item  $V$ is a càdlàg version of $(v(t,X_t))$, i.e.
 \begin{equation} \label{eq:cadlagV}
   V_t = v(t,X_t), \quad dt  d\P\text{-a.e.} \quad
          \end{equation}
        \item $v \in \shd(\P)$ and $a^\P(v) = fv$.
          In particular $\shd(\P)$ is non trivial.
        \item Let $w \in \shd(\P)$ another solution non-negative
       solution of the deterministic problem
          $a^\P(w) = fw, w(T, \cdot) = e^{-g}$. Then $w  = v$ as an element of $L^0$.
\end{enumerate}
\end{prop}

\begin{proof}
  \begin{enumerate}
  \item	Combining \eqref{eq:densityMarkovProp} and
\eqref{eq:VD}, taking into account Remark \ref{rmk:Function_v},
    for almost all
    $t \in [0,T[,$ $\P$-a.s. we have
        \begin{equation} \label{eq:CondExp}
          v(t,X_t) = D_t \E^\P\left[\exp\left(- U^0_T - g(X_T)\right)\right] \exp\left(U^0_t \right)
          = V_t,
\end{equation}
which shows \eqref{eq:cadlagV}.
\item
 The process $M$ defined in 
		\eqref{eq:MVBis},
                by \eqref{eq:MVTer}  it is a stochastic integral w.r.t. the martingale $D$, hence $M$ is a local martingale. Moreover, since it is the sum of a bounded process and a continuous adapted process (hence locally bounded), $M$ actually belongs to $\shh_{loc}^2(\P)$.

                By  Definition \ref{def:domainP} we get that $v \in \shd(\P)$.
            \item 	By assumption, for all $t \in [0, T]$ we have
	$$
	w(t, X_t) = w(0, X_0) + \int_0^t (fw)(r, X_r)dr + M[w]_t,
	$$
	where $M[w]$ is a local martingale. Then by integration by parts, the process
	$$
	\exp\left(-\int_0^t f(r, X_r)dr\right)w(\cdot, X_\cdot)
	$$
	is a local martingale, which is a genuine martingale since it is bounded. Consequently, by taking the conditional expectation with respect to $\shf_t$ and making use of the Markov property \eqref{eq:markovPropDef} we get
	$$
	w(t, X_t) = \E^\P\left[\exp\left(-\int_t^T f(r, X_r)dr - g(X_T)\right) \middle | X_t\right] \quad \P\text{-a.s.},
	$$
	for all $t \in [0, T]$. So $w$ verifies \eqref{eq:representationV},
        and by Remark \ref{rmk:v}, $w = v$ in $L^0$.
              \end{enumerate}

      \end{proof}
      \begin{remark} \label{rmk:MVBis}
        Obviously \eqref{eq:MVBis} implies that
        $M$ defined in \eqref{eq:MVTer} equals
        $M[v]$.
        
      \end{remark}
      We introduce now a clarifying property and
an equivalent useful characterization.
     The property below concerns $\P$ and a linear subspace
     $\shd \subset \shd(\P)$.

\begin{property}
	(Ideal Property).
	\label{cond:idealCondition}
	The element $v$ of $L^0$ introduced in Definition \ref{definitionV}
        is an element of $\shd(\P)$, such that $  \phi v \in \shd(\P)$ for all $\phi \in \shd$.
      \end{property}

      \begin{remark} \label{rkk:Ideal}
        We will see in Section \ref{sec:checkingHypV} that
   when $\P$ is Markovian, the Ideal Property is verified
   for any subalgebra
  $\shd \subset \shd(\P).$ 
  This will be the object of Theorem \ref{thm:IdealCond}.
\end{remark}

Next lemma provides some basic multiplications rules 
used in the sequel. It will help to find a useful
equivalent formulation for the Ideal Property.
Its proof is postponed to Section \ref{sec:proofsEquivalence}.

\begin{lemma}
	\label{lemma:equivalenceProduct}
	Let $\psi$ and $\phi$ be two elements of $\shd(\P)$. The following statements are equivalent.
	\begin{enumerate}
		\item $\phi \psi \in \shd(\P)$.
		\item 
                   The process (càdlàg modification of)
                  $(\phi \psi)(t,X_t)$
                  is locally square integrable under $\P$ and  there exists $\Gamma(\phi, \psi) \in \shb([0, T] \times \R^d, \R)$ (unique in $L^0$)
                  such that
		\begin{equation}\label{eq:bracket}
		\langle M[\phi], M[\psi] \rangle = \int_0^\cdot \Gamma(\phi, \psi)(r, X_r)dr \quad \P\text{-a.s.}
		\end{equation}
		Moreover we have
		\begin{equation}
			\label{eq:gammaPsiPhiP}
			\Gamma(\phi, \psi) = a^\P(\phi\psi) - \psi a^\P(\phi) - \phi a^\P(\psi).
		\end{equation}
	\end{enumerate}
\end{lemma}

\begin{remark}
  \label{rmk:carreDuChamp}
  \begin{enumerate}
    \item 
      The bilinear map $\Gamma$ in \eqref{eq:gammaPsiPhiP} is called the {\it carré du champ operator}.
    \item An immediate consequence of Lemma \ref{lemma:equivalenceProduct}
      is that, if $\phi \in \shd(\P)$ as well as $\phi^2$, then
      $ \langle M \rangle$ is absolutely continuous with respect to Lebesgue.
      \end{enumerate}
    \end{remark}

The corollary below is a consequence of Lemma \ref{lemma:equivalenceProduct}.
   \begin{coro}
	\label{coro:equivalenceProduct}
	Let $\shd \subset \shd(\P)$ be a linear subalgebra.
        The following statements are equivalent.
	\begin{enumerate}
        \item  $(\P, \shd)$ fulfills the Ideal Property.
	\item There exists a linear operator $\Gamma^v : \shd \rightarrow L^0$, such that for all $\phi \in \shd$,
		\begin{equation}
			\label{eq:bracketGamma}
			\langle M[\phi], M[v] \rangle = \int_0^\cdot
                        \Gamma^v(\phi)(r, X_r)dr.
		\end{equation}
		Moreover, 
		\begin{equation}
			\label{eq:operatorGammaPsi}
			\Gamma^v(\phi) = a^\P(v \phi) - v a^\P(\phi) -
                        \phi a^\P(v).
		\end{equation}
	\end{enumerate}
\end{coro}
\begin{proof}
  We apply Lemma \ref{lemma:equivalenceProduct} with
  $\psi = v$.
  We remark that, if $\phi \in \shd(\P)$ then, under $\P$,
the càdlàg modification $\Phi$ of  $(\phi(t,X_t))$ is locally square integrable
and therefore the càdlàg modification of $\phi(t,X_t) v(t,X_t)$, which is
is indistinguishable
  of the process $\Phi_t V_t$ is therefore square integrable
  because the process $V$ defined in \eqref{eq:VD} is bounded.

  The result is then a direct consequence of Lemma \ref{lemma:equivalenceProduct}, the only thing to check
  being the linearity of $\Gamma^v$ which immediately follows from \eqref{eq:operatorGammaPsi} as $a^\P$ is linear. 
      \end{proof}

    Let us come back to the exponential twist measure $\Q$
defined in (\ref{eq:Qstar}), 
which
is equivalent to the reference probability measure $\P$,
as  solution of a martingale problem naturally requires the use of Girsanov's theorem. Notice first that $D \in \shh_{loc}^2(\P)$
since it is a bounded martingale taking into account
\eqref{eq:optimalDensity} and Hypothesis \ref{hyp:costFunctions}. In particular, for any $\shm \in \shh_{loc}^2(\P)$, $\langle \shm, D\rangle$ is well-defined under $\P$. 
Let us then recall the Girsanov's theorem in our context, see for example Theorem 3.11, Chapter III in \cite{JacodShiryaev} along with Proposition 3.5 item $(i)$, Chapter III in \cite{JacodShiryaev} for the positivity of $D$.
\begin{theorem}
	\label{th:girsa}(Girsanov).
	Let $\shm \in \shh_{loc}^2(\P)$.
        Let $\Q$ defined as in (\ref{eq:Qstar}) and $D$ be the strictly positive
        càdlàg martingale introduced as in Notation \ref{not:MartD}.

        Under $\Q$ the process
        $\shm - \int_0^\cdot \frac{1}{D_{r-}}d\langle \shm, D\rangle_r$ is a $\Q$-local martingale.
      \end{theorem}
    
   We state another preparatory lemma.

      \begin{lemma} \label{lm:girsa}
        Let $D$ be the càdlàg martingale defined in Notation \ref{not:MartD}. 
Let $v$ be a Borel function
introduced in Definition \ref{definitionV}.
Suppose the validity of the Ideal Property with respect to a linear subspace $\shd$
of $\shd(\P)$.
Let $\Gamma^v$ be the map defined in Corollary \ref{coro:equivalenceProduct}.
For every $\phi \in \shd$, we have  
\begin{equation}
	\label{eq:secondComputationsGirsaBis}
	\int_0^\cdot \frac{1}{D_{r-}} d\langle M[\phi], D \rangle _r=
	\int_0^\cdot \frac{\Gamma^v(\phi)(r, X_r)}{v(r, X_r)}dr,
      \end{equation}
where $M(\phi)$ for $\phi \in \shd(\P)$ was defined in
\eqref{eq:defMPhiP}
\end{lemma}

 \begin{proof}
  
  We need to evaluate $\langle M[\phi],D \rangle$;
   this, taking into account Remark \ref{rmk:MVBis},
and \eqref{eq:MVTer}, is equivalent to computing the bracket $\langle M[\phi], M[v] \rangle$.
  Indeed, since $D$ is strictly positive $\P$-a.s., the same holds for $V$ by \eqref{eq:VD}.
By \eqref{eq:MVTer} we have
$$
\langle M[\phi], M[v]\rangle = \int_0^\cdot \E^\P[V_0]\exp(U_r^0)d\langle M[\phi], D\rangle_r,
$$
so that
\begin{equation} \label{eq:BracketM}
\langle M[\phi], D\rangle = \int_0^\cdot \frac{\exp(-U_r^0)}{\E^\P[V_0]}d\langle M[\phi], M[v] \rangle.
\end{equation}
Consequently, by \eqref{eq:VDBis} and \eqref{eq:BracketM}
\begin{equation}
	\label{eq:firstComputationsGirsa}
	\int_0^\cdot \frac{1}{D_{r-}} d \langle M[\phi], D \rangle_r = \int_0^\cdot  \frac{\E^\P[V_0] \exp( U^0_r)}{V_{r-}}
        d\langle M[\phi], D \rangle_r 
      =  \int_0^\cdot \frac{d\langle M[\phi],
          M[v]\rangle_r}{V_{r-}}.
\end{equation}
      Since $v \in \shd(\P)$
and
$V_r = v(r,X_r), \ dr \otimes d\P$-a.e.,
by Corollary \ref{coro:equivalenceProduct},
 \eqref{eq:firstComputationsGirsa} becomes
\begin{equation}
	\label{eq:secondComputationsGirsa}
	\int_0^\cdot \frac{1}{D_{r-}}d\langle M[\phi], D \rangle _r=
        \int_0^\cdot \frac{\Gamma^v(\phi)(r, X_r)}{V_{r-}}dr =
	\int_0^\cdot  \frac{\Gamma^v(\phi)(r, X_r)}{V_{r}}dr =
	\int_0^\cdot \frac{\Gamma^v(\phi)(r, X_r)}{v(r, X_r)}dr.
\end{equation}
This concludes the proof of the lemma.
\end{proof}

About the martingale problem verified by $\Q$, we need to specify the linear operator $a^{\Q}$ and
     its domain, as we will do below in Theorem \ref{th:markovDrift}.
     The idea is to apply Theorem \ref{th:girsa} with $\shm = M[\phi]$
    and Lemma  \ref{lm:girsa}.

\begin{theorem}
	\label{th:markovDrift}
        Let our reference measure $\P$ verify Hypothesis \ref{hyp:markovProp}
         (Markov property).
Let $v: [0,T] \times \R^d \rightarrow \R$ be a Borel function
defined in Definition \ref{definitionV}.
 Let $v_0: \R \rightarrow \R$ be a Borel function such that
 $v_0(X_0) = \E(V_0 \vert X_0), \P$ a.s. 

Suppose $\P$ fulfills the Ideal Property with respect to a linear subspace
$\shd$ of  $\shd(\P)$.
Let $\mu$ be the law of $X_0$ under $\P$.
 Let  $\Q$ be the probability measure defined in (\ref{eq:Qstar}).

 Then $\Q$ is solution to the martingale problem associated to $(\shd, a^{\Q}, \nu)$ in the sense of Definition \ref{def:martingaleProblem}, where for all $\phi \in \shd$, we have
	\begin{eqnarray} 
          a^{\Q}(\phi)(t, x) &:=& a^\P(\phi)(t, x) +
      \frac{\Gamma^v(\phi)(t, x)}{v(t, x)} \label{eq:newGenerator}
       \\ 
          \nu(dx) &:=& v_0(x)\mu(dx)/\int_{\R^d}v_0(y)\mu(dy).
                       \label{eq:nudx}             
	\end{eqnarray}
\end{theorem}
\begin{remark} \label{rmk:DQstar}
  \begin{enumerate}
\item	In particular $\shd \subset \shd(\Q)$.
\item If $\shd = \shd(\P)$, then $\Q$ verifies the Markov
  property and $\shd(\P) \subset \shd(\Q)$.
  \end{enumerate}
\end{remark}
\begin{remark} \label{rmk:Good}
	As we mentioned in the Introduction, our Theorem \ref{th:markovDrift} has some similarities with 
	Theorem 4.2 from \cite{ExpoTwistMarkov} which supposes the existence of a 
        (so-called) \textit{''good function''} (according to Section 1 in \cite{ExpoTwistMarkov}) 
	$v : \R^d \mapsto \R^*_+$, which in particular belongs to $\shd(\P)$.
	\begin{enumerate}
        \item Our Theorem \ref{th:markovDrift} implies the result of
          Theorem 4.2 in \cite{ExpoTwistMarkov} under their assumption,
      at least under, the technical  hypotheses on $f := a^\P(v)/v $
                  and $g := -\log(v)$ to be lower bounded.
          In this case we are in position to apply our
          Theorem \ref{th:markovDrift},
          which entails the statement of Theorem 4.2 in \cite{ExpoTwistMarkov}. Indeed, the particular assumption ``$\shd^h_A = \shd(A)$`` of Theorem 4.2 in \cite{ExpoTwistMarkov} implies the validity of our Ideal Property for $(\P,\shd)$ when $\shd$ is the whole extended domain $\shd(\P)$.
		
        \item Theorem 4.2 in \cite{ExpoTwistMarkov} (stated in the time inhomogeneous setting) can be used to prove our Theorem \ref{th:markovDrift}.
          If we assume that the process in \cite{ExpoTwistMarkov} is of the form $(t,X_t)$ with time horizon $T$ and $X$ being an inhomogeneous Markov process,
          considerations just above \eqref{eq:representationV}  provide the existence of a good function $v$ on the basis of a running cost $f$ and a terminal cost $g$.
	\end{enumerate}

\end{remark}

\begin{proof}[Proof of Theorem \ref{th:markovDrift}.]
  We first check item 1. of Definition \ref{def:martingaleProblem}. Let $\psi \in \B_b(\R^d, \R)$. Then, taking into account
  \eqref{eq:densityMarkovProp}, we get
	\begin{equation*}
          \begin{aligned}
            \E^{\Q}[\psi(X_0)] = \E^\P[D_0\psi(X_0)] =
 \E^\P[\frac{V_0}{\E(V_0)} \psi(X_0)] =
            \frac{1}{\E^{\P}[v_0(X_0)]}\E^\P[v_0(X_0)\psi(X_0)].
		\end{aligned}
	\end{equation*}
	Hence $\mathcal{L}^{\Q}(X_0) = \nu,$ where
$\nu$ is defined in \eqref{eq:nudx}.
It remains to check item 2. of Definition \ref{def:martingaleProblem}. Let $\phi \in \shd$. Theorem \ref{th:girsa} states that under $\Q$ the process $M[\phi] - \int_0^\cdot \frac{1}{D_{r-}}d\langle M[\phi], D\rangle_r$ is a local martingale. Now by Lemma \ref{lm:girsa},
we have
$$
	\int_0^\cdot \frac{1}{D_{r-}}d\langle M[\phi], D \rangle_r = \int_0^\cdot \frac{\Gamma^v(\phi)(r, X_r)}{v(r, X_r)}dr,
	$$
	where $\Gamma^v$ is given by Corollary \ref{coro:equivalenceProduct}.
        Consequently, under $\Q$, the process
	\begin{equation*}
          \begin{aligned}
            M(\phi) &-\int_0^\cdot \frac{\Gamma^v(\phi)(r, X_r)}{v(r, X_r)}dr \\
		&=	\phi(\cdot, X_\cdot)  - \phi(0, X_0) - \int_0^\cdot a^\P(\phi)(r, X_r)dr - \int_0^\cdot \frac{\Gamma^v(\phi)(r, X_r)}{v(r, X_r)}dr\\
			& = \phi(\cdot, X_\cdot) - \phi(0, X_0) - \int_0^\cdot a^{\Q}(\phi)(r, X_r)dr
		\end{aligned}
	\end{equation*}
	is a local martingale. This concludes the proof.
\end{proof}

\subsection{Two significant particular cases}

Below we mention two special cases,
which will be explored in the sequel.
They are formulated in the hypothesis below which includes
two alternative items.

\begin{hyp}
  \label{hyp:existencePsxJumpDiffusion}
 Let $b \in \B([0, T] \times \R^d, \R^d)$, $\sigma \in \B([0, T] \times \R^d,
\R^{d \times d}).$  Let $L : [0, T] \times \R^d \times \B(\R^d)$
be a deterministic Lévy kernel in the sense of Definition \ref{def:levyKernel}. 

  \begin{enumerate}
  \item 
    We consider  a truncation function $k : \R^d \rightarrow \R^d$, i.e. a bounded real function defined on $\R^d$ equal to the identity in a neighborhood of zero.
  We suppose that $\P$ is solution to the martingale problem with respect to $(\shd, a, \mu),$ where $\mu \in \shp(\R^d)$, $\shd := \shd(a) := C_b^{1, 2}([0, T] \times \R^d, \R)$ and $a(\phi)$ is given as 
		\begin{equation}
			\label{eq:generatorJumps}
			\begin{aligned}
				a(\phi)(t, x) & = \partial_t \phi(t, x) + \langle \nabla_x\phi(t, x), b(t, x)\rangle + \frac{1}{2}Tr[\sigma\sigma^\top(t, x)\nabla_x^2 \phi(t, x)]\\
				& + \int_{\R^d}\left(\phi(t, x + q) - \phi(t, x) - \langle \nabla_x \phi(t, x), k(q)\rangle\right)L(t, x, dq),
			\end{aligned}
		\end{equation}
		for all $\phi \in \shd$.
\item We consider the same framework as in previous item 1.,
  replacing the non-local operator \eqref{eq:generatorJumps}
   with
	\begin{equation}
		\label{eq:generatorDiffBrownian}
		a(\phi)(t, x) = \partial_t \phi(t, x) + \langle \nabla_x\phi(t, x), b(t, x)\rangle + \frac{1}{2}Tr[\sigma\sigma^\top(t, x)\nabla_x^2 \phi(t, x)],
	\end{equation}
	for all $\phi \in \shd$.
\end{enumerate}
      \end{hyp}

      \begin{remark}\label{rmk:Jacod}
        \begin{enumerate} 
        \item        Theorem 2.42, Chapter II in \cite{JacodShiryaev} implies that the a random measure $\nu$ is the compensator of the jump measure $\mu^X$
          if and only if,
for all $\phi \in C_b^2(\R^d)$ the process
\begin{equation*}
	\begin{aligned}
		& \phi(X_\cdot) - \phi(X_0) - \int_0^\cdot  (\nabla_x\phi)^\top (X_r) b(r, X_r) dr - \frac{1}{2}\int_0^\cdot Tr[\sigma\sigma^\top(r, X_r)\nabla_x^2 \phi(X_r)]dr \\
		& - \int_0^\cdot  \int_{\R^d}(\phi(X_{r-} + y) - \phi(X_r) - \langle \nabla_x\phi(X_r), y\rangle k(y))\nu (dr, dy),
	\end{aligned}
\end{equation*}
is a local martingale under $\P$. Indeed the characteristic triple is uniquely determined by previous property.
So, under the validity of  Hypothesis \ref{hyp:existencePsxJumpDiffusion} item 1., the Hypothesis \ref{hyp:compensator}
is fulfilled.
 \item
    Let us assume Hypothesis \ref{hyp:existencePsxJumpDiffusion} 2.
By   \eqref{eq:generatorDiffBrownian}, taking into account item 1. of the present Remark,  since the characteristics are uniquely determined,
we have $\nu^{X, \P} = 0$. This implies that the jump measure $\mu^X$ vanishes and the process $X$
is $\P$-a.s. continuous.
\end{enumerate}
\end{remark}

\section{Extension of the {\it carré du champ} under $\P$}
\label{sec:extension}
\setcounter{equation}{0}


In this Section \ref{sec:extension} we further characterize the operator $\Gamma^v$ appearing in Theorem \ref{th:markovDrift} and introduced in
Corollary \ref{coro:equivalenceProduct}.
Let now $\P$ be our reference  probability measure, fulfilling
the Markov Property Hypothesis \ref{hyp:markovProp}.
We consider a subdomain $\shd$ such that
$(\P,\shd)$ verifies the Ideal Property \ref{cond:idealCondition}.
Let $v:[0,T] \times \R^d \rightarrow \R$ be 
an intrinsic value function.

In this section, we will always also suppose the validity of Hypothesis \ref{hyp:compensator}.

  The proof of proposition below can be found in the Appendix,
see Section \ref{prop:GammaContinuousApp}.
\begin{prop}
  \label{prop:GammaContinuous}
  Let $\P \in \shp(\Omega)$ and let $\shd \subset \shd(\P)$
such that
$(\P,\shd)$ verifies the Ideal Property \ref{cond:idealCondition}.
We also suppose that $\P$ satisfies Hypothesis \ref{hyp:compensator}.
Given $ \phi \in  \shd$,
we denote
\begin{equation} \label{eq:martcomTer}
W(t, x, q) := (v(t, x + q) - v(t, x))(\phi(t, x + q) - \phi(t, x)).
\end{equation}  
Then we have the following.
        \begin{itemize}
        \item The process
          $ (\int_{\R^d} W(t, x, q) L(t, x, dq), t \in[0,T]),$
        is well-defined
        and it belongs to $\sha_{loc}(\P)$.
       \item
        Given $\Gamma^v$ introduced in Corollary \ref{coro:equivalenceProduct},
we define the linear operator
         $\Gamma^{v,c} : \shd \rightarrow \shb([0, T] \times \R^d, \R)$ by
	\begin{equation}
		\label{eq:decompGamma}
		\Gamma^{v,c} (\phi)(t, x) := \Gamma^{v}(\phi)(t, x) -
         \int_{\R^d} W(t, x, q) L(t, x, dq), \ (t,x) \in [0,T] \in \R^d,              \end{equation}
              for  $\phi \in \shd$.
        Then
	\begin{equation}
		\label{eq:defGammaContinuous}
		[ M[v]^c, M[\phi]^c] = \int_0^\cdot\Gamma^{v, c}(\phi)(r, X_r)dr
                 \quad \P \text{-a.s.},
               \end{equation}
             \end{itemize}
             where, given a local martingale $M$, $M^c$
             indicates the continuous local martingale component of $M$.
      \end{prop}

      \begin{remark} \label{rmk:GammaContinuous}
        The identity  \eqref{eq:defGammaContinuous}
        also shows that       $ \Gamma^{v,c}$
        can be considered as a map $ \shd \rightarrow L^0$.
      \end{remark}

Proposition \ref{prop:GammaContinuous} states that, under Hypothesis \ref{hyp:compensator},
the operator $\Gamma^v$ can be decomposed into two components: the first one is a component $\Gamma^{v, c}$,
given by \eqref{eq:decompGamma}, related to covariations of continuous local martingales and
the second one  is related to the  jumps compensation. 

We extend below the covariations of semimartingales and we introduce the notion
of weak Dirichlet process.

\begin{definition}
  \label{def:weakDirichlet}
   Let $\P \in \shp(\Omega)$.
        Let $Y, Z$ be  a c\`adl\`ag process.
        \begin{enumerate}
          \item (Covariation).
        We define
\begin{equation} 
\label{Appr_cov_ucpI}	 
[Z,Y]^{\varepsilon}(t):= \,\int_{]0,\,t]}\,
\frac{(Z((r+\varepsilon)\wedge t)-Z(r))(Y((r+\varepsilon)\wedge t)-Y(r))}{\varepsilon}\,dr.
\end{equation}
$[Z,Y]$ is  by definition the u.c.p. limit,
whenever it exists, of $[Z,Y]^{\varepsilon}$ when $\epsilon \rightarrow 0$.
If $Y, Z$ are c\`adl\`ag semimartingales
then $[Y,Z]$ is the usual (quadratic) covariation, see Proposition 1.1 of
\cite{rv95}.
\item (Weak Dirichlet process).
  $Z$ is called a {\bf weak Dirichlet process} if it is $(\F_t)$-adapted and if under $\P$ it admits a decomposition $Z = M + A$, where $M$ is a $(\P, \shf_t)$-local martingale and the process $A$ satisfies $[A, N] = 0$ for all $(\P, \F_t)$-continuous local martingales. $A$ will be called a \textbf{martingale orthogonal process}. For more properties on those processes, see
  \cite{Russo_Vallois_Book}, Chapter 15 and \cite{WeakDirichletJumps, GeneralizedMP}. In particular an $(\shf_t)$-semimartingale is a weak Dirichlet process.
\item A multidimensional weak Dirichlet process is a multidimensional process
  such that every component is a weak Dirichlet process.
\item If $Y, Z$ are vector-valued process (considered as column vectors)
  then the matrix $[Y,Z]:=([Y,Z]_{ij})$ denotes the matrix-valued process
  $([Y^i,Z^j])$.
\end{enumerate}
\end{definition}
The following statement is   Proposition 3.2 in \cite{GeneralizedMP}.
\begin{prop}
	\label{prop:decompWeakDirichlet}
	Let $Z$ be a c\`adl\`ag weak Dirichlet process. There exists a unique continuous local martingale $Z^c$ and a unique process $A,$ vanishing at zero, verifying $[A, N] = 0$ for all $(\P, \F_t)$-continuous local martingale such that $Z = Z^c + A$.
      \end{prop}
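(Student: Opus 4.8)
The plan is to build the decomposition directly from Definition \ref{def:weakDirichlet} by extracting the continuous martingale part of the local martingale component, and to obtain uniqueness from the fact that a continuous local martingale which is simultaneously martingale orthogonal must be indistinguishable from a constant.

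\textbf{Existence.} By Definition \ref{def:weakDirichlet}, under $\P$ we may write $Z = M + A'$ with $M$ a $(\P,\F_t)$-local martingale and $A'$ a martingale orthogonal process. Replacing $M$ by $M - M_0 + Z_0$ and $A'$ by $A' + M_0 - Z_0$ does not change any covariation of the second summand (adding a constant is irrelevant for $[\,\cdot\,,N]$), so we may and do assume $M_0 = Z_0$ and $A'_0 = 0$. Decompose the local martingale $M$ as $M = Z_0 + M^c + M^d$, where $M^c$ is its continuous martingale part and $M^d$ its purely discontinuous martingale part, both vanishing at $0$; such a splitting is available for an arbitrary local martingale, see Chapter I of \cite{JacodShiryaev}. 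Set $Z^c := Z_0 + M^c$ and $A := M^d + A'$. Then $Z^c$ is a continuous local martingale, $A = Z - Z^c$ is c\`adl\`ag, adapted and vanishes at $0$, and $Z = Z^c + A$. To see that $A$ is martingale orthogonal it suffices to check $[M^d, N] = 0$ for every $(\P,\F_t)$-continuous local martingale $N$: since $\Delta N \equiv 0$ and the continuous part of $M^d$ is zero, $[M^d,N]_t = \sum_{s\le t}\Delta M^d_s\,\Delta N_s = 0$, whence $[A,N] = [M^d,N] + [A',N] = 0$.

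\textbf{Uniqueness.} Suppose $Z = Z^c + A = \tilde Z^c + \tilde A$ are two decompositions of the required form, and put $D := Z^c - \tilde Z^c = \tilde A - A$. Then $D$ is a continuous local martingale with $D_0 = 0$, and for every $(\P,\F_t)$-continuous local martingale $N$ we have $[D,N] = [\tilde A, N] - [A, N] = 0$. Taking $N = D$ yields $\langle D\rangle = [D,D] = 0$, and a continuous local martingale vanishing at $0$ with zero quadratic variation is indistinguishable from $0$; hence $Z^c = \tilde Z^c$ and $A = \tilde A$.

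I expect the only mildly delicate points to be the bookkeeping of initial values and the invocation of the continuous/purely-discontinuous splitting for a local martingale that need not be locally square-integrable; once the orthogonality of $M^d$ to all continuous local martingales is in place, the rest is routine.
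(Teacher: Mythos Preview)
Your proof is correct. The paper does not actually supply a proof of this proposition; it simply quotes it as Proposition 3.2 of \cite{GeneralizedMP}, so there is no in-paper argument to compare against. Your approach---extracting $M^c$ from the continuous/purely-discontinuous splitting of the local-martingale summand and absorbing $M^d$ into the orthogonal part, then using $[D,D]=0$ for uniqueness---is the standard one and exactly what one would expect the cited reference to contain.
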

    We go on now focusing on a better characterization of the
      map $\Gamma^{v,c}$ when $X$ is weak Dirichlet process
      with continuous martingale component of diffusive type.
      This will include a large class of Markov processes
      even with very irregular drift so that they are not even semimartingales.

      \begin{coro} \label{cor:P1PDE} 
        We suppose the following for the basic Markovian reference probability $\P$.
        \begin{itemize}
        \item $\P$ fulfills a martingale problem with respect to $(\shd, a, \mu)$ with some operator
        $a$, an initial condition $\mu$ and $\shd \subset C^{0,1}$, in the sense of Definition \ref{def:martingaleProblem},
        \item Under $\P$, the canonical process $X$ is a weak Dirichlet process with unique decomposition $X = X^c + A$
         and there is a locally bounded function
         $\sigma: [0,T] \times \R^d\rightarrow \R^d$,
         with $[X^c, X^c]_{\cdot} = \int_0^\cdot \sigma^\top \sigma(s,X_s)ds.$
       \item Under $\P$, $X$ is a weakly finite quadratic variation process,
         see Definition 3.30 of \cite{GeneralizedMP}.
       \item $\P$ verifies Hypothesis \ref{hyp:compensator}.
\item There is a
 non-negative function $w \in C^{0,1}[0,T]\times \R^d)$
     belonging to $\shd(\P)$ and verifying $a^\P(w) = fw.$
\end{itemize}
     Then $w$ is an $L^0$-version of the intrinsic value function $v$
	which is defined in Definition \ref{definitionV}. 
        Moreover the map $\Gamma^{v,c}$ defined in Proposition
        \ref{prop:GammaContinuous}
        can be characterized as 
        \begin{equation} \label{eq:gradv}
        \Gamma^{v,c}(\phi) := (\nabla_x \phi)^\top \sigma \sigma^\top\nabla_x w.
\end{equation}
\end{coro}
\begin{remark}
  \begin{enumerate}
  \item For $\varepsilon > 0$, We set
$$ Z(\varepsilon) = 
          \int_0^T \frac{\vert X_{(r+\varepsilon)\wedge T} -  X_r\vert^2}{\varepsilon}dr.$$ According to Proposition 3.32 of \cite{GeneralizedMP}
          $X$ (under $\P$), is a weakly finite quadratic variation process
          if one of the two following conditions are fulfilled.
          \begin{itemize}
            \item  $ \sup_{0 < \varepsilon \le 1} Z(\varepsilon)
            < +\infty, \P$-a.s. 
          \item $  \sup_{0 < \varepsilon \le 1} \E^{\P}(Z(\varepsilon)) < + \infty.$
            \end{itemize}
          \item Obviously, if $[X,X]$ exists, then
            $X$ is a weakly finite quadratic variation process. 
            \end{enumerate}
          \end{remark}

\begin{proof} [Proof of Corollary \ref{cor:P1PDE}]
By Proposition \ref{prop:MarkovMart}, $w = v$, as element of $L^0$.
Theorem 3.37 of \cite{GeneralizedMP} we have
\begin{eqnarray*}
M[w]^c &=& w(0,X_0) + \int_0^\cdot (\nabla w)^\top(r,X_r) dX^c_r, \\
M[\phi]^c &=& \phi(0,X_0) +  \int_0^\cdot (\nabla \phi)^\top(r,X_r) dX^c_r.
  \end{eqnarray*}
  Consequently
$$   [ M[v]^c, M[\phi]^c] = [ M[w]^c, M[\phi]^c] = \int_0^\cdot (\nabla_x \phi)^\top \sigma \sigma^\top\nabla_x w(r,X_r) dr.$$
   The result follows by  Proposition \ref{prop:GammaContinuous}
which states that
$$		\int_0^\cdot\Gamma^{v, c}(\phi)(r, X_r)dr =
		[ M[v]^c, M[\phi]^c]. $$
              \end{proof}

We now extend this new operator $\Gamma^{v, c}$, defined in Proposition  \ref{prop:GammaContinuous}, 
from $\shd$ (being the domain of a martingale problem)
to a wider domain $\scrd$.
By assumption, for every $\phi \in \shd,$
$\phi(\cdot, X_\cdot)$ is a special semimartingale.
This will  allow to identify a unique decomposition for $\phi(\cdot, X_\cdot)$ 
even for an important class of non-special semimartingales.

The extension of $\Gamma^{v, c}$ will 
naturally intervene in the formulation of  the martingale problem verified by
$\Q$ in the examples in Section \ref{sec:applications} when the process $X$ is not a special semimartingale under $\P$. 
For $\phi, \psi \in \shd(\P)$, let $d_c$ be defined by
\begin{equation}
	\label{eq:metricD2}
	d_c(\phi, \psi) := \E^\P\left[\frac{[M[\phi]^c - M[\psi]^c ]_T}{1 + [ M[\phi]^c - M[\psi]^c ]_T}\right].
\end{equation}
\begin{remark} \label{rmk:pseudometric}
    \begin{enumerate}
  \item The application $d_c$ introduced by \eqref{eq:metricD2} is a {\it semidistance} in the sense that it is non-negative, symmetric, verifies the triangular inequality but $d_c(\phi, \psi)$ might be $0$ even if $\phi \neq \psi$.
  \item $d_c$ is homogeneous in the sense that
    $d_c(\phi, \psi) =  d_c(\phi-\psi, 0)$ for all $\phi, \psi \in  \shd(\P)$.
    \end{enumerate}
      \end{remark}
      We endow $L^0$ defined in Notation \ref{rmk:v},
      with the natural metric
\begin{equation}
	\label{eq:d3}
	d_{L^0}(\phi, \psi) := \E^\P\left[\frac{\int_0^T |\phi - \psi|(r, X_r)dr}{1 + \int_0^T |\phi - \psi|(r, X_r)dr}\right].
\end{equation}

At this point we can extend the operator $\Gamma^{v,c}$ from $\shd$  naturally to a larger space.

\begin{definition}
	\label{def:closureDomain}
	(Closure of $\shd$).
	A linear metric space $(\scrd, d_\scrd),$  where
$d_\scrd$ is a homogeneous distance,
        is said to be a closure of $\shd$ if the following holds.
	\begin{enumerate}
		\item $\shd$ is dense in $\scrd$ with respect to the metric $d_\scrd$.
		\item $d_c + d_{L^0} < d_\scrd$ on $\shd$, in the sense that convergence under $d_\scrd$ implies convergence under $d_c + d_{L^0}$.
		\item For every $\phi \in \scrd$ and for every continuous $\P$-local martingale $N$,
                  the covariation  $[\phi(\cdot, X_\cdot), N]$ exists.
	\end{enumerate}

      \end{definition}


\begin{remark}
  \label{rmk:equivalence}
    Let $(\scrd, d_{\scrd})$ be the closure of $(\shd, d_{\scrd})$
in the sense of Definition \ref{def:closureDomain}.
  \begin{enumerate}
  \item 
As immediate consequence of item 3. above, is that 
$\phi \mapsto [\phi(\cdot, X_\cdot), N]$ is continuous from $\scrd$ to $\D^{ucp}$ with respect to the metric $d_\scrd$ for every continuous $\P$-local martingale $N$.
This follows easily by Banach-Steinhaus theorem for $F$-spaces, see e.g. Chapter 2.1 in \cite{dunford}, taking into account Definition \ref{def:weakDirichlet} item 1. 
\item A sufficient condition for the validity of item 3. above, is that, still making use of
  the same Banach-Steinhaus, is that
  for every  $\phi \in \scrd$, the process $\varphi(\cdot,X)$ is a weakly finite quadratic variation process.
  In fact  $\shd$ is a dense subset of $\scrd$ and the covariation $[\phi(\cdot, X_\cdot), N]$ always exists when
  $\phi \in \shd$, since $\phi(\cdot, X_\cdot)$ is a semimartingale. 
  \end{enumerate}
        \end{remark}

      The proof of the proposition below is in the
      Appendix, see Section \ref{prop:closureWeakDirichletApp}.
\begin{prop}
	\label{prop:closureWeakDirichlet}
	Let $(\scrd, d_\scrd)$ be a closure of $\shd$ in the sense of Definition \ref{def:closureDomain}. Let $\phi \in \scrd$. Then $\phi(\cdot, X_\cdot)$ is a weak Dirichlet process in the sense of Definition \ref{def:weakDirichlet}.
      \end{prop}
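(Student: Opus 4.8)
The plan is to show that $\phi(\cdot,X_\cdot)$ decomposes as a $\P$-local martingale plus a martingale orthogonal process, exploiting the density of $\shd$ in $\scrd$ together with the three defining properties of a closure. First I would fix $\phi\in\scrd$ and, using property 1 of Definition \ref{def:closureDomain}, pick a sequence $(\phi_n)\subset\shd$ with $d_\scrd(\phi_n,\phi)\to 0$. For each $n$, since $\phi_n\in\shd\subset\shd(\P)$, we have the decomposition $\phi_n(\cdot,X_\cdot)=\phi_n(0,X_0)+M[\phi_n]+\int_0^\cdot a^\P(\phi_n)(r,X_r)dr$, where $M[\phi_n]\in\shh^2_{loc}(\P)$ and $A^n:=\int_0^\cdot a^\P(\phi_n)(r,X_r)dr$ is a continuous bounded-variation (hence martingale orthogonal) process. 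Property 2 of Definition \ref{def:closureDomain} ($d_c+d_{L^0}<d_\scrd$ on $\shd$) gives that $(\phi_n)$ is Cauchy for both $d_c$ and $d_{L^0}$: the first forces $(M[\phi_n]^c)$ to be Cauchy in the $d_c$-semidistance, and the second forces $(A^n)$ to be Cauchy in $d_{L^0}$, i.e. $\int_0^T|a^\P(\phi_n)-a^\P(\phi_m)|(r,X_r)dr\to 0$ in probability.

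Next I would extract the limiting decomposition. From the $d_{L^0}$-Cauchy property, $a^\P(\phi_n)$ converges in $L^0$ to some $\chi$, and $A^n\to A:=\int_0^\cdot\chi(r,X_r)dr$ u.c.p.; $A$ is continuous with finite variation, hence martingale orthogonal. Then $M[\phi_n]=\phi_n(\cdot,X_\cdot)-\phi_n(0,X_0)-A^n$. I need to argue that $\phi_n(\cdot,X_\cdot)$ converges u.c.p. to $\phi(\cdot,X_\cdot)$ — here I would use that $d_\scrd$ controls $d_{L^0}$ and the covariation semidistance, but the cleanest route is via the decomposition itself: once I know $M[\phi_n]$ converges, so does $\phi_n(\cdot,X_\cdot)$. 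To see $M[\phi_n]$ converges, I would combine the $d_c$-Cauchy property of the continuous parts with control on the jump parts. The jump parts $M[\phi_n]^d$ are handled through property 3: for the purely discontinuous components one uses that $[M[\phi_n]^d,N]=0$ for every continuous local martingale $N$, so the orthogonality that defines a weak Dirichlet process is tested only against continuous local martingales anyway. Concretely, the decomposition I aim for is $\phi(\cdot,X_\cdot)=\phi(0,X_0)+M+A$ where $M$ is the u.c.p. limit of $M[\phi_n]$ (a local martingale, being a u.c.p. limit of local martingales along a suitable localization) and $A$ is the martingale orthogonal process above; the identity $[A,N]=0$ is inherited from each $A^n$ by u.c.p. convergence plus property 3 ensuring $[\phi_n(\cdot,X_\cdot),N]\to[\phi(\cdot,X_\cdot),N]$, whence $[M,N]=[\phi(\cdot,X_\cdot),N]-[A,N]=\lim[M[\phi_n],N]$, and each $[M[\phi_n],N]$ is a local martingale (covariation of local martingales), so the limit is too, forcing $[A,N]=0$.

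The main obstacle I anticipate is establishing that $M[\phi_n]$ actually converges u.c.p. to a local martingale — the $d_c$-semidistance only controls the continuous martingale parts, and the semidistance is degenerate, so one must be careful about localization and about what happens to $M[\phi_n]^d$. The key trick is that property 3 of Definition \ref{def:closureDomain} already pins down $[\phi(\cdot,X_\cdot),N]$ for all continuous $N$, and the weak Dirichlet property only requires orthogonality against continuous local martingales; thus I do not actually need the full process $M[\phi_n]$ to converge in a strong sense — it suffices to produce \emph{some} decomposition $\phi(\cdot,X_\cdot)=M+A$ with $M$ a local martingale and $A$ martingale orthogonal, and this can be obtained by first defining $A$ from the $L^0$-limit $\chi$ and then showing $M:=\phi(\cdot,X_\cdot)-\phi(0,X_0)-A$ is a local martingale by checking it is orthogonal to all continuous local martingales via the computation above and invoking the uniqueness in Proposition \ref{prop:decompWeakDirichlet} — but to get that $M$ is genuinely a local martingale rather than merely orthogonal to continuous martingales, I would pass to the limit in the martingale property of $M[\phi_n]$ along a common localizing sequence, using a uniform integrability argument coming from the $d_c$ and $d_{L^0}$ bounds. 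Once $M$ is identified as a local martingale and $A$ as martingale orthogonal, Definition \ref{def:weakDirichlet} item 2 is satisfied and the proof is complete.
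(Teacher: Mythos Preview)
Your proposal has two genuine gaps that prevent it from going through.

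\textbf{First gap: you misread what $d_{L^0}$ controls.} Property 2 says $d_c + d_{L^0} < d_\scrd$ on $\shd$, but $d_{L^0}$ as defined in \eqref{eq:d3} measures $\int_0^T|\phi_n-\phi_m|(r,X_r)\,dr$, i.e.\ the functions $\phi_n$ themselves, \emph{not} $a^\P(\phi_n)$. There is no hypothesis whatsoever forcing $a^\P(\phi_n)$ to be Cauchy in $L^0$, so your construction of the limiting bounded-variation process $A=\int_0^\cdot\chi(r,X_r)\,dr$ is unfounded.

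\textbf{Second gap: the ``uniform integrability'' escape does not work.} Even granting the first point, your plan to show that $M:=\phi(\cdot,X_\cdot)-\phi(0,X_0)-A$ is a local martingale by passing to the limit in the martingale property of $M[\phi_n]$ fails: $d_c$ controls only the \emph{continuous} parts $M[\phi_n]^c$, and $d_{L^0}$ says nothing about $M[\phi_n]^d$. There is no uniform control on the purely discontinuous components, so no common localizing sequence or uniform integrability argument is available.

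\textbf{What the paper does instead.} The key observation you are missing is that a weak Dirichlet decomposition does \emph{not} require the martingale part to absorb all of $M[\phi_n]$. The paper writes
\[
\phi_n(\cdot,X_\cdot)=M[\phi_n]^c+A(\phi_n),\qquad A(\phi_n):=M[\phi_n]^d+\int_0^\cdot a^\P(\phi_n)(r,X_r)\,dr,
\]
and takes as local martingale part only the limit $M$ of $M[\phi_n]^c$ (which exists u.c.p.\ because $d_c$-Cauchy sequences of continuous local martingales converge u.c.p.\ to a continuous local martingale). The orthogonal process is then simply $A(\phi):=\phi(\cdot,X_\cdot)-M$; the purely discontinuous martingale $M[\phi_n]^d$ is absorbed into $A(\phi_n)$, which is harmless because $[M[\phi_n]^d,N]=0$ for every continuous local martingale $N$ anyway. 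The identity $[A(\phi),N]=0$ then follows from $[A(\phi_n),N]=0$ together with the continuity of $\psi\mapsto[\psi(\cdot,X_\cdot),N]$ (property 3) and of $\psi\mapsto[M[\psi]^c,N]$ (Kunita--Watanabe plus $d_c<d_\scrd$). No convergence of $a^\P(\phi_n)$ or of $M[\phi_n]^d$ is ever needed.
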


      \begin{remark} \label{rmk:closure}
  	Let $(\scrd, d_\scrd)$ be a closure of $\shd$ in the sense of Definition \ref{def:closureDomain}. 
        From now on, if $\phi \in \scrd$, 
        $M[\phi]^c$ will denote the unique continuous local martingale
of the weak Dirichlet decomposition of $\phi(\cdot, X_\cdot)$, see Proposition \ref{prop:decompWeakDirichlet}. 
\end{remark}

\begin{example}
  \label{ex:closure}
  Below we provide two examples, the first one (see Proposition
\ref{prop:closureCont})
  in a non-semimartingale framework,
  the second in a bounded variation (purely jump) framework with jumps,
  see Proposition \ref{prop:JumpExamples}.
  
  \begin{prop} \label{prop:closureCont}
    Let $\shd = \shc_b^{1, 2}$.  Consider $\scrd = \shc^{0, 1}$, equipped with
    the metric $\scrd$ so that a sequence 
$(f_n)$ converges to $f$ in $\scrd$ if it converges uniformly on compacts sets
     to $f$, as well as
    the corresponding space derivatives.
          We suppose the following. 
          \begin{enumerate}
  \item   Hypothesis \ref{hyp:compensator}.
          \item 
         The canonical process $X$ (under $\P$) is a weakly finite quadratic variation process.
         \end{enumerate}
         Then $\scrd$ is a closure of $\shd$
         in the sense of Definition \ref{def:closureDomain}.
     \end{prop}
\begin{proof}[Proof of Proposition \ref{prop:closureCont}]
       
  Obviously $\shd$ is dense in $\shc^{0, 1}$ which establishes item 1. of Definition \ref{def:closureDomain}.
  Concerning item 2., we  first observe that $d_{L^0} < d_\scrd$. 
On  the other hand,
  for all $\phi \in \shd$,
  $\phi(\cdot, X_\cdot)$ is a special semimartingale, and in particular a weak Dirichlet process.
          By Theorem 4.3 in \cite{GeneralizedMP}, under $\P$ the canonical process $X$ is a weak Dirichlet process.
          Let  $X = M^c + A$ be the unique decomposition under $\P$ according to Proposition \ref{prop:decompWeakDirichlet}.
          By Theorem 3.37 in \cite{GeneralizedMP}, the unique continuous local martingale part $M[\phi]^c$ of $\phi(\cdot, X_\cdot)$ is 
\begin{equation} \label{eq:MPhic}
          M[\phi]^c = \int_0^\cdot (\nabla_x\phi)^\top(r, X_r)dX_r^c.
        \end{equation}
        Concerning the fact that  $d_c < d_\scrd$, for all $\phi \in \shd$, \eqref{eq:MPhic} implies
		\begin{equation*}
[ M[\phi]^c ]_T = \left[\int_0^\cdot (\nabla_x\phi)^\top(r, X_r) dX_r^c\right]_T = \sum_{i, j = 1}^d \int_0^T \partial_{x_i}\phi(r, X_r)\partial_{x_j}\phi(r, X_r)d[X^{c, i}, X^{c, j}]_r.
		\end{equation*}
		Now if $\phi_n \underset{n \rightarrow + \infty}{\longrightarrow} 0$ in $\shc^{0, 1}$, clearly $[ M[\phi]^c]_T \underset{n \rightarrow + \infty}{\longrightarrow} 0$ in probability under $\P$ and it follows that $d_c(\phi_n, 0) \underset{n \rightarrow + \infty}{\longrightarrow} 0$.
Consequently we also have $d_{c} < d_\scrd$, which implies item 2. 
                
                For any $\phi \in \shc^{0, 1}$, $\phi(\cdot, X_\cdot)$ being a weak Dirichlet process, taking into account \eqref{eq:MPhic} we have
		$$
		[\phi(\cdot, X_\cdot), N] = [M[\phi]^c,N] = \int_0^\cdot (\nabla_x \phi)^\top(r, X_r)d[X^c, N].
		$$
		So, previous equality implies that the map  $\phi \mapsto [\phi(\cdot, X_\cdot),N]$ is continuous,
which yields item 3. of Definition \ref{def:closureDomain}.
                 We conclude from the above that $\scrd$ is a closure of $\shd$.

                 We remark that Theorems 3.37 and 4.3 in \cite{GeneralizedMP} are stated in the one-dimensional
                framework but they can be easily extended to the multidimensional case.
\end{proof}

  \begin{prop}  \label{prop:JumpExamples} We set here   $\shd = \shc^{0,0}_b$ and $\scrd = \shc^{0, 0}$, equipped with a metric 
 $d_\scrd$
                compatible with the uniform convergence on compact sets.
   Suppose, that under $\P$, the canonical process  $X $
   has a discrete number of jumps and that $X_t = \sum_{s\le t} (\Delta X_s),  \ \P$-a.s.
                Then, again, $\scrd$ is a closure of $\shd$
         in the sense of Definition \ref{def:closureDomain}.
       \end{prop}
       \begin{proof} \
         Again, obviously, item 1. of  Definition \ref{def:closureDomain} is fulfilled.
         Concerning item 2., again trivially  $d_{L^0} < d_\scrd$. 
         On the other hand we show below that $d_c \equiv 0$ so that item 2. is also verified.
         
         Indeed, for  $\phi \in \shd$, $\phi(t,X_t)$
         $$ \phi(t,X_t) =  \sum_{s\le t} (\Delta \phi(s, X_s)),  \ \P\text{-a.s},$$
         so that  $(\phi(t,X_t))$ is a bounded variation process and therefore a semimartingale which
         is also special, being $\phi$ bounded. So we can decompose the process
         in the sum of a martingale $M(\phi)$ plus a predictable process $V(\phi)$ with bounded variation.
         At this point $M[\phi]^c = 0$, otherwise, its quadratic variation would be non zero and this is impossible
         for a bounded variation continuous process. This implies that $d_c \equiv 0$ as well as its extension. 
         Concerning item 3., since $\phi(\cdot, X_\cdot)$   has bounded variation,
$[\phi(\cdot, X_\cdot), N]$ exists for all continuous local martingale $N$ and is equal to $0$,
see e.g. item $d)$ of Proposition 4.49, Chapter I in \cite{JacodShiryaev}.
In particular $\phi \mapsto [\phi(\cdot, X_\cdot), N]$ is continuous on $\shd$.
This concludes the proof that $\scrd$ is a closure of $\shd$.
We remark that the map $\Gamma^c = 0$ by \eqref{eq:defGammaContinuous} and so its extension
to $\scrd$ is also trivially zero.
\end{proof}


\end{example}

Before proving the main result of this section, we extend the operator $\Gamma^{v, c}$ introduced in Proposition \ref{prop:GammaContinuous} from $\shd$ to $\scrd$. The proof of the result below will be formulated in the Appendix,
see Section \ref{prop:extensionGammaClosureApp}.
\begin{prop}
	\label{prop:extensionGammaClosure}
	Let $\shd \subset \shd(\P)$ a subalgebra and $(\scrd, d_{\scrd})$ be a closure of $\shd$.
        Assume moreover that $\P$ verifies Hypothesis \ref{hyp:compensator} and let $\Gamma^{v, c}$ be the operator given by Proposition \ref{prop:GammaContinuous}.
	The operator $\Gamma^{v, c} : \shd \rightarrow L^0$ extends continuously to $\scrd \rightarrow L^0$: we will keep the notation $\Gamma^{v, c}$ for the extension and we still have
        	\begin{equation}
		\label{eq:defGammaContinuousExt}
		\int_0^\cdot\Gamma^{v, c}(\phi)(r, X_r)dr =
		[ M[v]^c, M[\phi]^c]  \quad \P \text{-a.s.}, \ \forall \phi \in \scrd.
	\end{equation}
      \end{prop}
      \begin{remark} \label{rmk:ClosureIdeal}
        Since $\P$ verifies the Markov property and $\shd$ is a subalgebra
        of $\shd(\P)$ then the Ideal Property  \ref{cond:idealCondition}
        is fulfilled, by Theorem \ref{thm:IdealCond} in the next section.
       \end{remark}

      Below we have finally the most important result of the section.
\begin{prop}
	\label{prop:idGamma}
	Let us assume the following for our reference probability $\P$.
\begin{itemize}
\item Under $\P$, the canonical process $X$ is a weak Dirichlet process with unique decomposition $X = X^c + A$ given by
  Proposition \ref{prop:decompWeakDirichlet}.
  \item $X$ is a weakly finite quadratic variation process.
\item Hypothesis \ref{hyp:compensator}.
\item $\shd$ is dense in $\scrd = \shc^{0, 1}$.
\end{itemize}
Then $\scrd$ is a closure of $\shd$ in the sense of Definition \ref{def:closureDomain} and for all $\phi \in \scrd$,
        $(t, x) \in [0, T] \times \R^d$,
	\begin{equation}
		\label{eq:idGamma}
		\Gamma^v(\phi)(t, x) = \langle \Gamma^{v, c}(id)(t, x), \nabla_x \phi(t, x) \rangle + \int_{\R^d}(v(t, x + q) - v(t, x))(\phi(t, x + q) - \phi(t, x))L(t, x, dq), 
	\end{equation}
	with $\Gamma^{v, c}(id) := \left(\Gamma^{v, c}(id_i)\right)_{1 \le i \le d}$, $\Gamma^{v, c}$ being the linear operator given by Proposition \ref{prop:extensionGammaClosure}.
      \end{prop}

\begin{proof}

  The fact that $\scrd$ is a closure of $\shd$ was the object  of
Proposition \ref{prop:closureCont}.
Concerning the proof of \eqref{eq:idGamma}, we see from \eqref{eq:decompGamma} in Proposition \ref{prop:GammaContinuous} that it is enough to show that
	\begin{equation}
		\label{eq:idGammaC}
		\Gamma^{v, c}(\phi)(t, x) = \langle \Gamma^{v, c}(id)(t, x), \nabla_x \phi(t, x) \rangle \quad dt\otimes d\P_{X_t}\text{-a.e.}
	\end{equation}
         Recall that for all $\phi \in \scrd$,
        by \eqref{eq:defGammaContinuousExt} in Proposition \ref{prop:extensionGammaClosure},
        we have
\begin{equation} \label{eq:partialvId}
  \int_0^\cdot \Gamma^{v, c}(\phi)(r, X_r)dr = [ M[v]^c, M[\phi]^c].
 	\end{equation}
In particular, taking $\phi = id_i$, which is an element of
        $\scrd$,
\begin{equation} \label{eq:partialvIdPlus}
  \int_0^\cdot \Gamma^{v, c}(id)(r, X_r)dr = [ M[v]^c, M[id]^c] =
   [ M[v]^c, X^c],
 	\end{equation}
where $M[id]:= (M[id_i])_{1 \le i \le d}$.
        Let $\phi \in \shd$.
        Taking into account the definition of $L^0$,
        it is enough to prove that
	\begin{equation} \label{eq:enough}
			[ M[v]^c, M[\phi]^c]  =  \int_0^\cdot (\nabla_x\phi)^\top(r, X_r)\Gamma^{v, c}(id)(r, X_r)dr.
		\end{equation}
                Now, since $X$ is a weak Dirichlet process under $\P$ and it is a weakly finite quadratic variation process,
                Theorem 3.37 in \cite{GeneralizedMP} yields
	\begin{equation} 
          \label{eq:MPhicDirichlet}
		M[\phi]^c = \phi(0,X_0) + \int_0^\cdot (\nabla_x \phi)^\top(r, X_r) dX^c_r.
	\end{equation}
Finally 
	\begin{equation*}
			[ M[v]^c, M[\phi]^c]  = \int_0^\cdot (\nabla_x \phi)^\top(r, X_r)d[M[v]^c, X^c]_r
	\end{equation*}
gives \eqref{eq:enough} using \eqref{eq:partialvIdPlus}.

\end{proof}

      \section{The verification of the Ideal Property}
      
      \label{sec:checkingHypV}

      

\setcounter{equation}{0}

   

As announced earlier we show in this section that a
Markovian probability often fulfills the Ideal property.
The most important result of the section is a following one.

\begin{theorem} \label{thm:IdealCond}
  Let $\P$ be a probability fulfilling Hypothesis \ref{hyp:markovProp}.
  Then $\P$  verifies the Ideal Property
  with respect to every subalgebra $\shd$ of $\shd(\P)$.
\end{theorem}

  The proof of Theorem \ref{thm:IdealCond} will be based on the proposition
  below.
  \begin{prop}
		\label{prop:exitenceGamma}
		Let $M, N$ be two elements of $\shh_{loc}^2(\P)$ such that $M_u - M_t$ and $N_u - N_t$ are $\shf_{t, u}$-measurable for all $0 \le t \le u \le T$. Assume that $d\langle N \rangle \ll dt$. There exists a Borel function $\Gamma : [0, T] \times \R^d \rightarrow \R$ such that
		\begin{equation}
			\label{eq:angleBracketGamma2}
			\langle M, N\rangle = \int_0^\cdot \Gamma(r, X_r)dr \quad \P\text{-a.s.}
		\end{equation}
              \end{prop}

  \begin{proof} [Proof of Theorem \ref{thm:IdealCond}]
    Let $\phi \in \shd$. Since $\shd$ is an algebra
    also   $\phi^2 \in \shd \subset  \shd(\P)$. By
     Remark \ref{rmk:carreDuChamp},
     the angle bracket of the
    locally square integrable martingale $M(\phi)$  is absolutely continuous.
      Since $v \in \shd(\P)$, then $M[v]$ is a locally square integrable
    martingale and so, we can apply Proposition
    \ref{prop:exitenceGamma}
with $M = M(v)$ and $N = M (\phi)$, 
    to conclude the proof.

  \end{proof}

     Before formulating the proof of Proposition \ref{prop:exitenceGamma},
     we need to establish some intermediate results.

	\begin{lemma}
		\label{lemma:piSystem}
		For all $t \in [0, T]$, $\Pi := \{F_t \cap F_{t, T}~:~F_t \in \shf_t, F_{t, T} \in \shf_{t, T}\}$ is a non-empty $\pi$-system (i.e.
                a family  stable with respect to the intersection)
               such that $\sigma(\Pi) = \shf$.
	\end{lemma}
	\begin{proof}
          We only  prove that  $\sigma(\Pi) = \shf$, the rest being
          straightforward
          taking into account the Dynkin monotone class theorem,
          see e.g. Theorem 3.2, Chapter 1, in \cite{billingsley}.
Since $\shf_{t, T}$ is a $\sigma$-field, $\Omega \in \shf_{t, T}$ and for all $F_t \in \shf_t$, $F_t = F_t \cap \Omega \in \Pi$. Hence $\shf_t \subset \Pi$.
Similarly, $\shf_{t, T} \subset \Pi$ and we have $\shf = \shf_t \vee \shf_{t, T} \subset \sigma(\Pi)$. Since $\sigma(\Pi) \subset \shf$, we conclude that $\sigma(\Pi) = \shf$.
\end{proof}
\begin{remark}
	\label{rmk:piSystem}
	The result of Lemma \ref{lemma:piSystem} remains valid replacing $\shf_t$ by $\shf_{t, r}$, $\shf_{t, T}$ by $\shf_{r, T}$ and $\shf$ by $\shf_{t, T}$,
        for all $r \in [t,T]$.
      \end{remark}

	\begin{prop}
		\label{prop:FtuMeasurable}
		Let $M \in \shh_{loc}^2(\P)$ such that $M_u - M_t$ is $\shf_{t, u}$-measurable for all $0 \le t \le u \le T$.
 $\langle M\rangle_u - \langle M \rangle_t$ is $\shf_{t, u+}^\P$-measurable for all $0 \le t \le u \le T$.
\end{prop}
Before proving Proposition \ref{prop:FtuMeasurable}, we mention that,
	in what follows, the conditional expectation of non-negative random variables has to be understood in the generalized sense given in Proposition \ref{prop:generalizedCondExp}. It is necessary, in our setting, since for example the random variable $\langle M \rangle_u - \langle M \rangle_t$ might not be integrable. Yet this version of the conditional expectation has the same characterization as the usual conditional expectation and the Markov property still holds, see Proposition \ref{prop:generalizedCondExp} and Proposition \ref{prop:markovPropGen}. The proof of Proposition \ref{prop:FtuMeasurable} is inspired by the proof of Proposition 4.5 in \cite{MartingaleAF} and requires several intermediate steps. In the following results, $M \in \shh_{loc}^2(\P)$ is such that $M_u - M_t$ is $\shf_{t, u}$-measurable for all $0 \le t \le u \le T$.
	\begin{lemma}
		\label{lemma:quadraticVariation}
		For all $0 \le t \le u \le T$, $[M]_u - [M]_t$ is $\shf_{t, u}^\P$-measurable.
	\end{lemma}
	\begin{proof}
		Let $t = t_1^n < t_2^n < \ldots < t_n^n = u$ be a sequence of subdivisions of the interval $[t, u]$ such that $\underset{i < n}{\max}~(t_{i + 1}^n - t_i^n) \underset{n \rightarrow + \infty}{\longrightarrow} 0$. By definition of the quadratic variation, see e.g. Theorem 4.47, Chapter I in \cite{JacodShiryaev}, we have
		\begin{equation}
			\label{eq:quadraticVarLim}
			\sum_{i < n}\left(M_{t_{i + 1}^n} - M_{t_{i}^n}\right)^2 \overset{\P}{\underset{n \rightarrow + \infty}{\longrightarrow}} [M]_u - [M]_t,
		\end{equation}
		and since the random variable $\sum_{i < n}\left(M_{t_{i + 1}^n} - M_{t_{i}^n}\right)^2$ is $\shf_{t, u}$-measurable, $[M]_u - [M]_t$ is $\shf_{t, u}^\P$-measurable.
	\end{proof}
        
	\begin{lemma} \label{lm:condExpFtXt}
		Let $0 \le t \le u \le T$. For all $F \in \shf_{t, T}$,
		\begin{equation}
			\label{eq:condExpFtXt}
			\E^\P\left[\1_F\left(\langle M \rangle_{u} -\langle M \rangle_{t}\right) |  \shf_t\right] = \E^\P\left[\1_F\left(\langle M \rangle_{u} -\langle M \rangle_{t}\right) |  X_t\right] \quad \P\text{-a.s.}
		\end{equation}
	\end{lemma}
	\begin{proof}
		\begin{enumerate}
			\item Let for now $F \in \shf$ and $N^F$ be the c\`adl\`ag version of the martingale $r \mapsto \E^\P[\1_F|\F_r]$. Let us first prove that
			\begin{equation}
				\label{eq:condExpStep2}
				\E^\P\left[\1_F\left(\langle M \rangle_{u} -\langle M \rangle_{t}\right) | \shf_t\right] = \E^\P\left[\int_t^u N_{r-}^Fd[M]_r \middle | \shf_t\right] \quad \P\text{-a.s.}
			\end{equation}
			Recall that the process $\langle M \rangle$ is an increasing, locally integrable process. Then since $N^F$ is bounded and since $\langle M \rangle$ is predictable, by Proposition 3.14 item b), Chapter I in \cite{JacodShiryaev}, it holds that the process $\tilde M := N^F\langle M \rangle - \int_0^\cdot N_{r-}^Fd\langle M \rangle_r$ is a local martingale. Let then $(\tau_n)_{n \ge 0}$ be a localizing sequence for $\tilde M$ and $\langle M \rangle$. Then $\E^\P\left[\tilde M_{u \wedge \tau_n}\right] = 0$, which rewrites
			\begin{equation}
				\label{eq:firstEqDual}
				\E^\P\left[\1_F\langle M \rangle_{u\wedge \tau_n}\right] = \E^\P\left[\int_0^{u \wedge \tau_n}N_{r-}^Fd\langle M \rangle_r\right],
			\end{equation}
			where we have used the tower property of the conditional expectation for the left-hand side of equality \eqref{eq:firstEqDual}.
  Concerning  the right-hand side of \eqref{eq:firstEqDual},
 we remark that the r.v. inside the expectation is integrable
 since $N^F$ is bounded and $\langle M \rangle_{\cdot \wedge \tau_n}$ is
 integrable.
 We recall that, given the local martingale $M \in \shh_{loc}^2(\P)$,
 the oblique bracket $\langle M \rangle$ is
 the compensator of $[M]$,
 see for example 3.20, Chapter I in \cite{JacodShiryaev}.
       The non-negative process $r \mapsto N_{r-}^F$ being càglàd and
       and adapted, is predictable. It follows then
       from Theorem 3.17 item (iii), Chapter I in \cite{JacodShiryaev} that
			$$
                        \E^\P\left[\int_0^{u \wedge \tau_n}N_{r-}^Fd\langle M \rangle_r\right] = \E^\P\left[\int_0^{u \wedge \tau_n}N_{r-}^Fd[M]_r\right],
			$$
			which, by \eqref{eq:firstEqDual}, yields
			$$
			\E^\P\left[\1_F\langle M \rangle_{u\wedge \tau_n}\right] = \E^\P\left[\int_0^{u \wedge \tau_n}N_{r-}^Fd[M]_r\right].
			$$
			Similarly,
			$$
			\E^\P\left[\1_F\langle M \rangle_{t\wedge \tau_n}\right] = \E^\P\left[\int_0^{t \wedge \tau_n}N_{r-}^Fd[M]_r\right],
			$$
			and it follows from the two previous equalities that
%
%
			\begin{equation}
				\label{eq:characDualProjLocal}
				\E^\P\left[\1_F\left(\langle M \rangle_{u\wedge \tau_n} - \langle M \rangle_{t\wedge \tau_n}\right)\right] = \E^\P\left[\int_{t \wedge \tau_n}^{u \wedge \tau_n}N_{r-}^Fd[M]_r\right].
			\end{equation}
The sequences $\left(\langle M \rangle_{u\wedge \tau_n} - \langle M \rangle_{t\wedge \tau_n}\right)_{n \ge 1}$ and $\left(\int_{t \wedge \tau_n}^{u \wedge \tau_n}N_{r-}^Fd[M]_r\right)_{n \ge 1}$ are $\P$-a.s. increasing after a certain rank.
			Letting $n \rightarrow + \infty$ in \eqref{eq:characDualProjLocal}, by monotone convergence, yields
			\begin{equation}
				\label{eq:characDualProj}
				\E^\P\left[\1_F\left(\langle M \rangle_{u} -\langle M \rangle_{t}\right)\right] = \E^\P\left[\int_{t}^{u}N_{r-}^Fd[M]_r\right].
			\end{equation}
			Let then $G \in \shf_t$. Equality
                        \eqref{eq:characDualProj} applied replacing $F$ with
                        $F \cap G$ yields
			\begin{equation}
				\label{eq:condExpStep1}
				\E^\P\left[\1_G\1_F\left(\langle M \rangle_{u} -\langle M \rangle_{t}\right)\right] = E^\P\left[\int_{t}^{u}N_{r-}^{F \cap G}d[M]_r\right].
			\end{equation} 
			For all $r \in [t, u],$ $\E^\P[\1_{F\cap G} | \shf_r] = \1_G \E^\P[\1_F | \shf_r]$, therefore the c\`adl\`ag version of these processes are $\P$-indistinguishable and \eqref{eq:condExpStep1} rewrites
			\begin{equation*}
				\E^\P\left[\1_G\1_F\left(\langle M \rangle_{u} -\langle M \rangle_{t}\right)\right] = \E^\P\left[\1_G\E^\P\left[\int_t^u N_{r-}^Fd[M]_r \middle | \shf_t\right]\right].
\end{equation*}
			Since previous equality holds for any $G \in \shf_t$, we deduce that \eqref{eq:condExpStep2} is verified.
			
                      \item We now prove \eqref{eq:condExpFtXt}. Let $F \in \shf_{t, T}$. Then by Lemma \ref{lemma:measurability1F},
       $\E^\P\left[\1_F \middle | \shf_r\right]$ is $\shf_{t, r}^\P$-measurable for any $r \in [t, u]$.
 Consequently, by the tower property,
 $\E^\P\left[\1_F \middle | \shf_{t, r}\right] = \E^\P[\1_F | \shf_r]$ $\P$-a.s.,
               which implies that $ r \mapsto \E^\P[\1_F | \shf_r], r \in
                       [t,T] $ is also
                        an $\shf_{t, r}^\P$-martingale  and
                     $r \mapsto N_r^F, r \in [t,T]$ is also a càdlag version
                         of 
                         $r \mapsto \E^\P[\1_F | \shf_r]$,
                         as $\shf_{t, r}^\P$-martingale.
                  Consequently    $r \mapsto N_{r-}^F, r \in [t, u],$ is
               $\shf_{t, r}^\P$-adapted.
            Since by Lemma \ref{lemma:quadraticVariation}, $r \mapsto [M]_r - [M]_t$
is also $\shf_{t, r}^\P$-adapted,
then the random variable $\int_t^u N_{r-}^Fd[M]_r$ is $\shf_{t, u}^\P$-measurable.


By Proposition 3.12 in \cite{MartingaleAF} there exists an $\shf_{t, u}$-measurable random variable $Y$
such that $\int_t^u N_{r-}^Fd[M]_r = Y$ $\P$-a.s. Then by \eqref{eq:condExpStep2} and the Markov property given in Proposition \ref{prop:markovPropGen},
 $$
 \E^\P\left[\1_F\left(\langle M \rangle_{u} -\langle M \rangle_{t}\right) | \shf_t\right]
 = \E^\P[Y | \shf_t] = \E^\P[Y | X_t] \quad \P\text{-a.s.}
			 $$
 Since $\sigma(X_t) \subset \shf_t$,
the right-hand side of \eqref{eq:condExpFtXt}, equals the right-hand side of previous equality,
by the 
the tower property of the conditional expectation.
This concludes the proof of   \eqref{eq:condExpFtXt}.
		\end{enumerate}
              \end{proof}

	We are now ready to prove Proposition \ref{prop:FtuMeasurable}.
	\begin{proof}[Proof of Proposition \ref{prop:FtuMeasurable}.]
          Let $0 \le t \le u \le T$. By definition, the compensator of process is adapted, hence $\langle M \rangle_{u} -\langle M \rangle_{t}$ is $\shf_u^\P$-measurable. Since $\shf_{t, u+}^\P = \shf_u^\P \cap \shf_{t, T}^\P$,
          we are going to prove that $\langle M \rangle_{u} -\langle M \rangle_{t}$ is also $\shf_{t, T}^\P$-measurable and
          the result will follow.
          This will be a consequence of the property
          \begin{equation} \label{eq:F}
  \E^\P\left[\1_F\left(\langle M \rangle_{u} -\langle M \rangle_{t}\right)\right]
          = \E^\P\left[\1_F\E^\P\left[\left(\langle M \rangle_{u} -\langle M \rangle_{t} \right) \middle | \shf_{t, T}\right] \right],
        \end{equation}
        for all $F \in \shf$, that we prove below.
        By Lemma \ref{lemma:piSystem}, the set $\Pi := \{F_t \cap F_{t, T} ~:~F_t \in \shf_t, ~F_{t, T} \in \shf_{t, T}\}$ is a $\pi$-system generating $\shf$, hence the previous equality holds if we prove it for 
$F$ of the form
$F = F_t \cap F_{t, T}$ where $F_t \in \shf_t$ and $F_{t, T} \in \shf_{t, T}$.
By Lemma \ref{lm:condExpFtXt} we have
		\begin{equation*}
			\begin{aligned}
				\E^\P\left[\1_F\left(\langle M \rangle_{u} -\langle M \rangle_{t}\right)\right] & = \E^\P\left[\1_{F_t}\E^\P\left[\1_{F_{t, T}}\left(\langle M \rangle_{u} -\langle M \rangle_{t} \right) \middle | \shf_t\right]\right]\\
				& = \E^\P\left[\1_{F_t}\E^\P\left[\1_{F_{t, T}}\left(\langle M \rangle_{u} -\langle M \rangle_{t} \right) \middle | X_t\right]\right]\\
				& = \E^\P\left[\1_{F_t}\E^\P\left[\E^\P\left[\1_{F_{t, T}}\left(\langle M \rangle_{u} -\langle M \rangle_{t} \right) \middle | \shf_{t, T}\right] \middle | X_t\right]\right],
			\end{aligned}
		\end{equation*}
		where we have used the fact that $\sigma(X_t) \subset \shf_{t, T}$ and the tower property of the conditional expectation for the latter equality. Since the random variable $\E^\P\left[\1_{F_{t, T}}\left(\langle M \rangle_{u} -\langle M \rangle_{t} \right) \middle | \shf_{t, T}\right]$ is $\shf_{t, T}$-measurable, we can apply the Markov property \eqref{eq:markovPropGen}
  and we get
		\begin{equation*}
			\begin{aligned}
				\E^\P\left[\1_F\left(\langle M \rangle_{u} -\langle M \rangle_{t}\right)\right] & = \E^\P\left[\1_{F_t}\E^\P\left[\E^\P\left[\1_{F_{t, T}}\left(\langle M \rangle_{u} -\langle M \rangle_{t} \right) \middle | \shf_{t, T}\right] \middle | \shf_t\right]\right] \\
				& = \E^\P\left[\1_{F_t}\E^\P\left[\1_{F_{t, T}}\left(\langle M \rangle_{u} -\langle M \rangle_{t} \right) \middle | \shf_{t, T}\right] \right]\\
				& = \E^\P\left[\1_{F_t}\1_{F_{t, T}}\E^\P\left[\left(\langle M \rangle_{u} -\langle M \rangle_{t} \right) \middle | \shf_{t, T}\right] \right]\\
				& = \E^\P\left[\1_F\E^\P\left[\left(\langle M \rangle_{u} -\langle M \rangle_{t} \right) \middle | \shf_{t, T}\right] \right].
			\end{aligned}
                      \end{equation*}
                      This finally implies \eqref{eq:F}
                and concludes the proof.
	\end{proof}
	We will need the following technical result.
	\begin{lemma}
		\label{lemma:txVersion}
		Let $\gamma : [0, T] \times \Omega \rightarrow \R$ be a progressively measurable process.
                Assume that $\gamma(t, X)$ is $\sigma^\P(X_t)$-measurable
for a.e. $t \in [0, T]$.
                There exists a Borel function $\Gamma \in \shb([0, T] \times \R^d, \R)$ such that $\gamma(t, X) = \Gamma(t, X_t)$ $dt \otimes d\P$-a.e.
	\end{lemma}
	\begin{proof}
		\begin{enumerate}
                \item Assume first that $\E^{\P}\left[\int_0^T |\gamma(t, X)|dt\right] < + \infty$. By Proposition 5.1 in \cite{MimickingItoGeneral}, there exists a measurable function $\Gamma : [0, T] \times \R^d \rightarrow \R$ such that $\E^\P\left[\gamma(t, X)|X_t\right] = \Gamma(t, X_t)$ $dt \otimes d\P$-a.e.
                  Since, for almost all $t$,  $\gamma(t, X)$ is $\sigma^\P(X_t)$-measurable, by Proposition 3.12 in \cite{MartingaleAF} there exists a $\sigma(X_t)$-measurable random variable $Z_t$ such that $\gamma(t, X) = Z_t$ $\P$-a.s. and we have
			\begin{equation*}
				\Gamma(t, X_t) = \E^\P[\gamma(t, X) | X_t] = \E^\P[Z_t | X_t] = Z_t = \gamma(t, X) \quad \P\text{-a.s.}
			\end{equation*}
			Therefore,
			\begin{equation*}
				\begin{aligned}
					\E^\P\left[\left|\int_0^T(\Gamma(r, X_r) - \gamma(r, X))dr\right|\right] \le \int_0^T \E^\P\left[|\Gamma(r, X_r) - \gamma(r, X)|\right]dr = 0,
				\end{aligned}
			\end{equation*}
			and we conclude that $\gamma(t, X) = \Gamma(t, X_t)$ $dt \otimes d\P$-a.e.
			
			\item Now for a generic progressively measurable process $\gamma$, set $\gamma_n := (-n) \vee \gamma \wedge n$. Then $\E^{\P}\left[\int_0^T |\gamma_n(t, X)|dt\right] \le n < + \infty$ and by item 1. there exists $\Gamma_n \in \shb([0, T] \times \R^d, \R)$ such that $\gamma_n(t, X) = \Gamma_n(t, X_t)$ $dt \otimes d\P$-a.e. We set $\Gamma := \underset{n \rightarrow + \infty}{\limsup}~\Gamma_n$. Then $\Gamma \in \shb([0, T] \times \R^d, \R)$ and
			$$
			\gamma(t, X) = \lim_{n \rightarrow + \infty} \gamma_n(t, X) = \limsup_{n \rightarrow + \infty} \gamma_n(t, X) = \limsup_{n \rightarrow + \infty} \Gamma_n(t, X_t) = \Gamma(t, X_t) \quad dt \otimes d\P\text{-a.e.}
			$$
		\end{enumerate}
              \end{proof}

	\begin{prop}
		\label{prop:existenceGamma}
		Let $M \in \shh_{loc}^2$ such that $M_u - M_t$ is $\shf_{t, u}$-measurable for all
                $0 \le t \le u \le T$. Assume that $d\langle M \rangle \ll dt$. Then there exists a Borel function $\Gamma : [0, T] \times \R^d \rightarrow \R$ such that
		\begin{equation}
			\label{eq:angleBracketGamma}
			\langle M \rangle = \int_0^\cdot \Gamma(r, X_r)dr \quad \P\text{-a.s.}
		\end{equation}
	\end{prop}
        
        \begin{proof}
          
          By Proposition 3.5, Chapter I in \cite{JacodShiryaev}, there exists a non-negative progressively measurable process $A$ such that, $\P$-a.s., $\langle M\rangle = \int_0^\cdot A_rdr$.  In view of Lemma \ref{lemma:txVersion} we are going to prove that $A_t$ is $\sigma^\P(X_t)$-measurable
for almost all $t \in [0, T]$.


We fix $t \in [0, T[$. For all $n \in \N^*$, it holds that
		\begin{equation}
			\label{eq:approxIntegral}
			n\left(\langle M \rangle_{t + \frac{1}{n}} - \langle M \rangle_t\right) = n \int_t^{t + \frac{1}{n}}A_rdr \quad \P\text{-a.s.}
                      \end{equation}
By Lemma \ref{lemma:measurablility} applied with $\shg = \shf_t$,
      there is a $\shb([0,T]) \otimes \shf_t$-measurable version $\eta^t$
                      of the 
                      process   $(\E^\P[A_r | \shf_t])_{r \in [0, T]}$.
Let us fix $\omega$ outside a suitable $\P$-null set.
                      By Lebesgue differentiation theorem, for almost all $r \in [0,T]$, we get
                    \begin{equation*}
          \eta^t(s) = 
          \lim_{n \rightarrow + \infty} n\int_s^{s + \frac{1}{n}} \eta^t(r)dr.
           \end{equation*}
Consequently we can write, for all $s \in [0,T]$,
    \begin{equation} \label{eq:E1}
          \eta^t(s) = 
          \liminf_{n \rightarrow + \infty} n\int_s^{s + \frac{1}{n}} \eta^t(r)dr.
        \end{equation}
        Recall that $A$ is non-negative and $(\shf_t)$-adapted.
        Then by Fubini's theorem for the conditional expectation,
$\P$-a.s.  and for all $t \in [0,T]$,    \eqref{eq:E1} implies
        \begin{eqnarray*}
          A_t &=& \eta^t (t) = \liminf_{n \rightarrow + \infty} n\int_t^{t + \frac{1}{n}} \eta^t(r)dr 
=  n\int_t^{t + \frac{1}{n}} \E(A_r \vert \shf_t) dr \\
     &=&             \liminf_{n \rightarrow + \infty} \E\left(n \int_t^{t + \frac{1}{n}} A_r dr
                  \vert \shf_t\right) 
          =
        \liminf_{n \rightarrow + \infty} \E\left(n \int_t^{t + \frac{1}{n}} A_r dr
                  \vert X_t\right).           
        \end{eqnarray*}
        In fact, last equality has used the fact that, by Proposition \ref{prop:FtuMeasurable},
        $\int_t^{t + \frac{1}{n}}A_rdr$ is $\shf_{t, t + \frac{2}{n}}$-measurable and  Markov property stated
        in Proposition \ref{lemma:markovPropGeneral}.

    
        This finally proves that $A_t$ is $\sigma(X_t)$-measurable
        so that Lemma \ref{lemma:txVersion} can be applied.
        This allows to conclude the proof.
          \end{proof}
        
	Below we state Proposition 4.16 in \cite{MartingaleAF}.
	\begin{prop}
		\label{prop:ortho}
		Let $\shh^{2, dt} := \{M \in \shh^2_0~:~d\langle M \rangle \ll dt\}$ and $\shh^{2, \perp dt} := \{M \in \shh^2_0~:~d\langle M \rangle \perp dt\}$. $\shh^{2, dt}$ and $\shh^{2, \perp dt}$ are orthogonal sub-Hilbert spaces of $\shh^2_0$, $\shh^2_0 = \shh^{2, dt}\oplus \shh^{2, \perp dt}$, and any element $M$ of $\shh^{2, dt}_{loc}$ is strongly orthogonal to any element $N$ of $\shh^{2, \perp dt}_{loc}$, i.e. $\langle M, N \rangle = 0$.
	\end{prop}
Finally we proceed to the proof of Proposition \ref{prop:exitenceGamma}.



\begin{proof} [Proof of Proposition \ref{prop:exitenceGamma}]
		Given a square integrable martingale $\shn$ we denote by $\shn^{dt}$ its orthogonal projection on $\shh^{2, dt}$.
		
		Let $(\tau_n)_{n \ge 1}$ be a sequence of stopping times such that $M^{\tau_n}, N^{\tau_n} \in \shh^2_0(\P)$. Since $\langle N^{\tau_n} \rangle = \langle N\rangle^{\tau_n}$, it is immediate that $d\langle N^{\tau_n} \rangle \ll dt$. Hence $N^{\tau_n} \in \shh^{2, dt}_0$ and by Proposition \ref{prop:ortho},
		\begin{equation*}
			\label{eq:posNegId}
			\begin{aligned}
		\left\langle M^{\tau_n}, N^{\tau_n}\right\rangle & = \left\langle (M^{\tau_n})^{dt}, N^{\tau_n}\right\rangle\\
				& = \frac{1}{4}\left\langle (M^{\tau_n})^{dt} + N^{\tau_n} \right\rangle - \frac{1}{4}\left\langle (M^{\tau_n})^{dt} - N^{\tau_n}\right\rangle.
			\end{aligned}
		\end{equation*}
		Consequently
		$$
		Pos\left(\left\langle M^{\tau_n}, N^{\tau_n}\right\rangle\right) = \frac{1}{4}\left\langle (M^{\tau_n})^{dt} + N^{\tau_n} \right\rangle
		$$
		and
		$$
		Neg\left(\left\langle M^{\tau_n}, N^{\tau_n}\right\rangle\right) = \frac{1}{4}\left\langle (M^{\tau_n})^{dt} - N^{\tau_n}\right\rangle.
		$$
		Recall that $N^{\tau_n} \in \shh^{2, dt}_0$ and that $\shh^{2, dt}_0$ is a linear space.
                Consequently, by additivity,
                we have
		$dPos\left(\left\langle M^{\tau_n}, N^{\tau_n}\right\rangle\right) \ll dt$ and $dNeg\left(\left\langle M^{\tau_n}, N^{\tau_n}\right\rangle\right) \ll dt$. Moreover, it holds
		\begin{equation}
			\label{eq:decompPos}
			Pos\left(\left\langle M, N\right\rangle\right) = \sum_{n \ge 1}Pos\left(\left\langle M^{\tau_n}, N^{\tau_n}\right\rangle\right)\1_{]\tau_n, \tau_{n + 1}]}
		\end{equation}
		and
		\begin{equation}
			\label{eq:decompNeg}
			Neg\left(\left\langle M, N\right\rangle\right) = \sum_{n \ge 1}Neg\left(\left\langle M^{\tau_n}, N^{\tau_n}\right\rangle\right)\1_{]\tau_n, \tau_{n + 1}]}.
		\end{equation}
		It follows from \eqref{eq:decompPos} and \eqref{eq:decompNeg} and the preceding that $dPos\left(\left\langle M, N\right\rangle\right) \ll dt$ as well as $dNeg\left(\left\langle M, N\right\rangle\right) \ll dt$. Proposition \ref{prop:existenceGamma}, applied with $M$
replaced by
$ \frac{1}{2}(M^{dt} + N)$ and $M = \frac{1}{2}(M^{dt} - N)$,
ensures the existence of two functions $\Gamma_+$ and $\Gamma_-$ in $\B([0, T] \times \R^d)$ such that
		$$
		Pos\left(\left\langle M, N\right\rangle\right) = \int_0^\cdot \Gamma_+(r, X_r)dr \quad \text{and} \quad Neg\left(\langle M, N\rangle\right) = \int_0^\cdot \Gamma_-(r, X_r)dr \quad \P\text{-a.s.}
		$$
		Setting $\Gamma = \Gamma_+ - \Gamma_-$, by additivity, we conclude the proof.
              \end{proof}

This finally concludes the proof of Proposition  \ref{prop:exitenceGamma}.

\section{Examples of applications}

\label{sec:applications}

In this section we will provide examples of probability measures
$\P$ which are solutions of martingale problem with respect to
$(\shd, a,\mu)$, for some integro-PDE operators $a$ and some algebra $\shd$.
In this case $(\P, \shd)$ will fulfill the Ideal Property \ref{cond:idealCondition}
since it is Markovian. 
$v$ will still denote an intrinsic value function introduced in Definition \ref{rmk:Function_v}.
 
 \begin{remark} \label{rmk:vtilde}
 $\Gamma^{v,c}: \shd \rightarrow L^0$ is the map defined
in the sense of
Proposition \ref{prop:GammaContinuous}; the same notation
will refer to its extension to the closure on $\scrd = C^{0,1}$ in
Proposition \ref{prop:extensionGammaClosure}.
\end{remark}
In the examples below, the Markov property, stated in Hypothesis
\ref{hyp:markovProp} for a given probability $\P$, is verified indirectly,
in particular proving that $\P$ is {\it Regularly Markovian},
see Definition \ref{def:regularlyMarkovian}.
In fact a Regularly Markovian probability 
fulfill the Markov property
mentioned above, see Proposition \ref{lemma:markovPropGeneral}.

\subsection{Markovian jump diffusions}

We focus in this section on the case when $\P$ is the law of a  Markovian diffusion with jumps, namely
when $\P$
fulfills a martingale problem in the framework of  Hypothesis \ref{hyp:existencePsxJumpDiffusion} item 1.
with respect to some truncation function $k$.

        \begin{remark} \label{rmk:jumps}
  We list below some possible assumptions on the coefficients
  for which there is a solution $\P$  which is Regularly Markovian
  fulfilling the martingale problem described in 
  item 1. Hypothesis \ref{hyp:existencePsxJumpDiffusion} item 1.,
  with respect to some truncation function $k$.
  	\begin{itemize}
        \item $b$
          has linear growth uniformly in $t$, $\Sigma$ is bounded continuous and non-degenerate.
          Moreover there exists a measure $L_*$ on $\shb(\R^d \backslash \{0\})$ such that $\int_{\R^d} \left(1 \wedge |q|^2\right) L_*(dq) < + \infty$ and for all $(t, x) \in [0, T] \times \R^d$, $L_*(\cdot) - L(t, x, \cdot)$ is a non-negative measure. See Theorem 5.2 in \cite{KomatsuJumpsDiffusion}.

		\item There is no diffusion component, $b$ is bounded continuous, $(t, x) \mapsto \int \frac{|q|^2}{1 + |q|^2}\vphi(q)L(t, x, dq)$ is
                  continuous for all $\vphi \in \shc_b(\R^d)$ and $\underset{(t, x) \in [0, T] \times \R^d}{\sup}\int_{\R^d}(1 \wedge |q|^2)L(t, x, dq) < + \infty$,
                  see Theorem 2.2 in \cite{StroockDiffusionLevy} for the existence and Theorem 3 in \cite{KomatsuPureJumps} for the uniqueness. This typically includes the case of $\alpha$-stable Lévy processes, for instance Cauchy processes. 
	\end{itemize}
\end{remark}

\begin{prop}
	\label{prop:markovianDriftJumps}
	Let $\P$ in the framework of Hypothesis \ref{hyp:existencePsxJumpDiffusion} item 1.
        with respect to some truncation function $k$,
     which fulfills
Hypothesis \ref{hyp:markovProp}. 
        Set $\mu := \shl^\P(X_0)$.
Let $\nu$ be defined in \eqref{eq:nudx}   and 
$v$ an intrinsic value function, see Definition \ref{definitionV}.

Then the exponential twist $\Q$ given by (\ref{eq:Qstar})
is solution of a martingale problem $(\shd, a^{\Q}, \nu)$
and for all $\phi \in \shd$
	\begin{equation}
		\label{eq:newGeneratorJumpDiffusion}
		\begin{aligned}
                  	a^{\Q}(\phi)(t, x) & = a(\phi)(t, x) +  \left\langle \nabla_x \phi(t, x), \frac{\Gamma(v)(t, x)}{v(t, x)}\right\rangle \\
			& + \int_{\R^d}\left(\frac{v(t, x + q)}{v(t, x)} - 1\right)(\phi(t, x + q) - \phi(t, x))L(t, x, dq),
		\end{aligned}
              \end{equation}

	which rewrites

	\begin{equation}
		\label{eq:newGeneratorJumpDiffusion2}
		\begin{aligned}
             a^{\Q}(\phi)(t, x)
             & = \partial_t\phi(t, x) + \left\langle \nabla_x \phi(t, x), b(t, x) + \frac{\Gamma(v)(t, x)}{v(t, x)} + \int_{\R^d}k(q)\left(\frac{v(t, x + q)}{v(t, x)} - 1\right)L(t, x, dq)\right\rangle\\
			& + \frac{1}{2}Tr[\nabla_x^2\phi(t, x)\sigma\sigma^\top(t, x)]\\
			& + \int_{\R^d}\left(\phi(t, x + q) - \phi(t, x) - \langle\nabla_x\phi(t, x), k(q)\rangle\right)\frac{v(t, x + q)}{v(t, x)}L(t, x, dq),
		\end{aligned}
	\end{equation}
	where
        \begin{equation} \label{eq:GammavJumps}
          \Gamma(v)(s, x) := \left[\Gamma^{v,c}(id_i)(s, x)
          \right]_{1 \le i \le d},
	\end{equation}
       and $\Gamma^{v,c}$ is the map mentioned in Remark \ref{rmk:vtilde}. 
\end{prop}
\begin{remark} \label{rmk:intfv}
  \begin{enumerate}
  \item We insist on the fact, $\P$ of previous proposition
  fulfills  Hypothesis  \ref{hyp:compensator}. 
  \item  Note that by Lemma \ref{lemma:intCompensator}, taking into
    account 
Hypothesis \ref{hyp:compensator}, 
the function
\begin{equation} \label{eq:intfv}
  (t,x) \mapsto \int_{\R^d}k(q)\left(\frac{v(t, x + q)}{v(t, x)} - 1\right)L(t, x, dq),
\end{equation}
  is a well-defined
element of $L^0(\Q) (= L^0(\P))$. 
\end{enumerate}
\end{remark}
 \begin{proof}[Proof of Proposition \ref{prop:markovianDriftJumps}]
  Since $\shd$ is an algebra, Theorem \ref{thm:IdealCond} states that $(\P,\shd)$ fulfills the
   Ideal Property \ref{cond:idealCondition}. 

   The first equality \eqref{eq:newGeneratorJumpDiffusion} is then a direct application of Theorem \ref{th:markovDrift},
    taking into account Proposition \ref{prop:idGamma}. The second equality \eqref{eq:newGeneratorJumpDiffusion2} follows by noticing that
    \begin{equation}
    	\label{eq:vPhi}
	\begin{aligned}
    		& \int_{\R^d}\left(\frac{v(t, x + q)}{v(t, x)} - 1\right)(\phi(t, x + q) - \phi(t, x))L(t, x, dq) \\
          = & \int_{\R^d}\frac{v(t, x + q)}{v(t, x)}(\phi(t, x + q) - \phi(t, x) - \langle\nabla_x\phi(t, x), k(q)\rangle) L(t,x,dq) \\
          -& \int_{\R^d}(\phi(t, x + q) - \phi(t, x) - \langle\nabla_x\phi(t, x), k(q)\rangle) L(t,x,dq)  \\
    		+ & \left\langle \nabla_x \phi(t, x), \int_{\R^d}k(q)\left(\frac{v(t, x + q)}{v(t, x)} - 1\right)L(t, x, dq)\right\rangle,
    	\end{aligned}
      \end{equation}
      	and by injecting \eqref{eq:vPhi} in \eqref{eq:newGeneratorJumpDiffusion} taking into account \eqref{eq:generatorJumps}.
\end{proof}

\subsection{The case of Markovian diffusions}
\label{sec:markovDiff}

We consider here the particular case of Brownian diffusions, i.e.
when $\P$ verifies Hypothesis \ref{hyp:existencePsxJumpDiffusion} item 2.

\begin{remark} \label{rmk:NoAss}
  We emphasize that we do not make any assumption on the coefficients $\sigma, b$ of the martingale problem. In fact, we do not even require local boundedness of these coefficients.
  All the results of this paper are based on the
simple Markov property of
the probability measure $\P$
without any restriction
  on the generator $a$ of the underlying martingale problem.
\end{remark}

\begin{remark} \label{rmk:MarkovCC}
  Hypothesis \ref{hyp:markovProp}, via Definition
  \ref{def:regularlyMarkovian} and Proposition \ref{prop:markovGenCond},
    is verified for instance in the following cases.

	\begin{itemize}
		\item $\sigma, b$ have linear growth and $\sigma$ is continuous and non-degenerate, see e.g. \cite{stroock} Corollary 7.1.7 and Theorem 10.2.2.
		\item $d=1$ and $\sigma$ is lower bounded by a positive constant on each compact set, see \cite{stroock}, Exercise 7.3.3.
		\item $d =2$, $\sigma\sigma^\top$ is non-degenerate and $\sigma$  and $b$ are time-homogeneous and bounded, see \cite{stroock}, Exercise 7.3.4.
		\item $\sigma, b$ are Lipschitz with linear growth (with respect to the space variable, independently in time).
		\item $\sigma, b$ are bounded and continuous, see Chapter 12 in \cite{stroock} and the Markov selection therein.
	\end{itemize}
      \end{remark}

      \begin{coro}
	\label{coro:markovianDrift}
	Let $\P$ be solution of the martingale problem mentioned
        in Hypothesis \ref{hyp:existencePsxJumpDiffusion} item 2. 
        and Hypothesis \ref{hyp:markovProp}.
        Set $\mu := \shl^\P(X_0)$.
	Let $\nu$ be defined in \eqref{eq:nudx} and
	$v$ in Definition \ref{definitionV}.
        Then the exponential twist
        $\Q$  given by (\ref{eq:Qstar})
	is solution of a martingale problem
	$(\shd, a^{\Q}, \nu),$ where,
	for all $\phi \in \shd$
	\begin{equation}
		\label{eq:newGeneratorDiffusion}
		a^{\Q}(\phi)(t, x) = \partial_t \phi(t, x) + \left\langle \nabla_x \phi(t, x), b(t, x) + \frac{\Gamma(v)(t, x)}{v(t, x)}\right\rangle + \frac{1}{2}Tr[\sigma\sigma^\top(t, x)\nabla_x^2\phi(t, x)],
	\end{equation}
	where
	\begin{equation} \label{eq:Gammav}
		\Gamma(v)(s, x) := \left[\Gamma^{v,c}(id_i)(s, x)\right]_{1 \le i \le d},
	\end{equation}
 where $\Gamma^{v,c}$ is again the map mentioned in Remark \ref{rmk:vtilde} 2.
\end{coro}
\begin{proof}
  Since $\P$ verifies Hypothesis \ref{hyp:compensator}
trivially taking $L \equiv 0$,
  the result follows directly from Proposition \ref{prop:markovianDriftJumps}.
\end{proof}

\begin{remark} \label{rmk:GenGrad}
  When the function $v$ defined in Definition
  \ref{definitionV} is an element of $\shc^{0, 1},$  then Corollary
  \ref{cor:P1PDE} implies that 
  $\Gamma(v) = \sigma\sigma^\top \nabla_x v. $
    Indeed, by item 2. of Proposition \ref{prop:MarkovMart}, $a^\P(v) = fv$.
\end{remark}

We state now some consequences which will be used in the companion
paper \cite{BOROptimi2023}, when the coefficients $b, \sigma$ have linear growth.
To avoid more technical conditions we will suppose the initial condition
to be deterministic, i.e. $\mu = \delta_x$, for  some $x \in \R^d$.
\begin{hyp}
	\label{hyp:coefDiffusion}
	\begin{enumerate}
		\item  The coefficients $b, \sigma$ in \eqref{eq:generatorDiffBrownian} satisfy
		\begin{equation} \label{item:linGrowth} 
		|b(t, x)| + \|\sigma(t, x)\| \le C(1 + |x|),
		\end{equation}
		for some constant $C > 0$.
		
		\item \label{item:unifEllip} $\sigma$ is uniformly elliptic in the sense that, for all $(t, x) \in [0, T] \times \R^d$, $\xi \in \R^d$, $\xi^\top \sigma\sigma^\top(t, x)\xi \ge c_\sigma |\xi|^2,$ for some constant $c_\sigma > 0$.
	\end{enumerate}
\end{hyp}
\begin{lemma} 
	\label{lemma:integrabilityGamma}
	Assume Hypothesis \ref{hyp:coefDiffusion}. Let $\P$ be a solution of the martingale problem $(\shd, a, \delta_x)$ for some $x \in \R^d$ and operator $a$ defined by \eqref{eq:generatorDiffBrownian}.
	Let $v$ be the function defined in Definition \ref{definitionV}.
	and $\Gamma(v)$ defined in \eqref{eq:Gammav}. 
	Then for all $1 < p < 2$,
	$$
	\E^{\Q}\left[\int_0^T \left|\frac{\Gamma(v)(r, X_r)}{v(r, X_r)}\right|^pdr\right] < + \infty.
	$$
\end{lemma}
      
\begin{proof}
  Since Hypothesis \ref{hyp:coefDiffusion} holds, $(\P, \shc_b^{1, 2})$
  verifies
  Hypothesis \ref{hyp:markovProp},
  see Remark \ref{rmk:MarkovCC}. Then by Corollary \ref{coro:markovianDrift}, under $\Q$ the canonical process decomposes into
	\begin{equation}
		\label{eq:decompCoro}
		X_t = x + \int_0^t b(r, X_r)dr + \int_0^t \frac{\Gamma(v)(r, X_r)}{v(r, X_r)}dr + M^{\Q}_t,
	\end{equation}
	where $M^{\Q}$ is a $\Q$-local martingale such that $\langle M^{\Q}\rangle_t = \int_0^t \sigma\sigma^\top(r, X_r)dr$. This decomposition is a direct consequence of Proposition 5.4.6 in \cite{karatshreve}, noticing that the martingale problem verified by $\P$ extends to $\shd = C^{1,2}([0,T], \R^d)$.
        On the other hand, since $H(\Q| \P) < + \infty$, Theorem 2.1 in \cite{GirsanovEntropy} gives the existence of a progressively measurable process $\alpha$ such that
	\begin{equation}
		\label{eq:entropBound}
		\E^{\Q}\left[\int_0^T|\sigma^\top(r, X_r)\alpha_r|^2dr\right] < + \infty,
	\end{equation}
	and under $\Q$ the canonical process has decomposition
	\begin{equation}
		\label{eq:decompGirsa}
		X_t = x + \int_0^t b(r, X_r)dr + \int_0^t \sigma\sigma^\top(r, X_r)\alpha_rdr + \tilde M_t,
	\end{equation}
	where the $\Q$-local martingale $\tilde M$ verifies $\langle \tilde M\rangle_t = \int_0^t \sigma\sigma^\top(r, X_r)dr$. Identifying the bounded variation and the martingale components between \eqref{eq:decompCoro} and \eqref{eq:decompGirsa}, we get $\tilde M = M^{\Q}$ and
\begin{equation} \label{eq:leonard}
    \sigma\sigma^\top(\cdot, X_\cdot)\alpha_\cdot = \Gamma(v)(\cdot, X_\cdot)/v(\cdot, X_\cdot), \ dt\otimes d\Q{\rm-a.e}.
\end{equation}
Besides, since $\|d\Q/d\P\|_\infty < + \infty$, the linear growth
of $\sigma$ and classical moments estimates under $\P$ yield
	\begin{equation}
		\label{eq:sigmaMoment}
		\E^{\Q}\left[\int_0^T \|\sigma(r, X_r)\|^qdr\right] < + \infty,
	\end{equation}
for all $q \ge 1$. We then fix $1 < p < 2$. By H\"older's inequality applied on the measure space $([0, T] \times \Omega, \B([0, T])\otimes \F, dt\otimes d\Q)$, it holds that
	\begin{equation*}
		\begin{aligned}
			\E^{\Q}\left[\int_0^T |\sigma\sigma^\top(r, X_r)\alpha_r|^pdr\right] & \le 	\E^{\Q}\left[\int_0^T \|\sigma(r, X_r)\|^p|\sigma^\top(r, X_r)\alpha_r|^pdr\right]\\
			& \le \left(\E^{\Q}\left[\int_0^T \|\sigma(r, X_r)\|^{2p/(2 - p)}dr\right]\right)^{1 - p/2}\left(\E^{\Q}\left[\int_0^T |\sigma^\top(r, X_r)\alpha_r|^2dr\right]\right)^{p/2}.
		\end{aligned}
              \end{equation*}
      By   \eqref{eq:leonard}  \eqref{eq:entropBound} and \eqref{eq:sigmaMoment}, we get 
	$$
	\E^{\Q}\left[\int_0^T \left|\frac{\Gamma(v)(r, X_r)}{v(r, X_r)}\right|^pdr\right] = \E^{\Q}\left[\int_0^T |\sigma\sigma^\top(r, X_r)\alpha_r|^pdr\right] < + \infty.
	$$
\end{proof}
The corollary below  constitutes a key tool in \cite{BOROptimi2023},
see Proposition 4.6.

\begin{coro}
	\label{coro:markovianDriftSDE}
	Let $X$ be a solution in law of the SDE
       	\begin{equation*}
		\left\{
		\begin{aligned}
			& dX_t = b(t, X_t)dt + \sigma(t, X_t)dW_t\\
			& X_0 = x,
		\end{aligned}
		\right.
	\end{equation*}
	where $b, \sigma$ verify Hypothesis \ref{hyp:coefDiffusion}. Then
	there exists a function $\lambda \in \B([0, T] \times \R^d, \R^d)$ such that exponential twist $\Q$ given by (\ref{eq:Qstar}) 
	is the law of a weak solution of the SDE
	\begin{equation*}
		\left\{
                  \begin{aligned}
			& dX_t = \left(b(t, X_t) + \lambda(t, X_t)\right)dt + \sigma(t, X_t)dW_t\\
			& X_0 = x.
		\end{aligned}
		\right.
	\end{equation*}
	Moreover, $(t,\omega) \mapsto \lambda(t,X_t(\omega)) \in L^p([0, T] \times \R^d, dt \otimes d\Q)$ for all $1 \le p < 2$.
\end{coro}
\begin{proof}
  Since Hypothesis \ref{hyp:coefDiffusion} holds, $\P$
  verifies  Hypothesis \ref{hyp:markovProp},
 see Remark
  \ref{rmk:MarkovCC}, so that we can apply
  Corollary \ref{coro:markovianDrift} and Lemma \ref{lemma:integrabilityGamma}.
  The result follows from the equivalence between weak solution of SDEs and solution of martingale problems associated to $(\shd, a, \delta_x)$, where $a$ is given by \eqref{eq:generatorDiffBrownian}, see e.g. Proposition 5.4.6 in \cite{karatshreve}.

\end{proof}

\subsection{SDEs with distributional drift}
\label{sec:distriContinuous}

We apply in this section our result to a more irregular framework, where the reference probability measure $\P$ is solution of a martingale problem with
parabolic generator $a(\phi) = \partial_t \phi + \langle \nabla_x \phi, b \rangle + \frac{1}{2}\Delta \phi$, where the drift $b = (b^1, \ldots,b^d)$  is a
(vector valued Schwartz) distribution. We will use throughout this section the formalism and some of the results from \cite{IssoglioRussoDistDriftBesov}.
For $\gamma \in \R$ we denote $\shc^\gamma(\R^d)$ the Besov space $\B^{\gamma}_{\infty, \infty}$. For details on Besov spaces we refer to Section 2.7 in \cite{BesovSpaces}. In particular, for any $\phi \in \shc^{\alpha}(\R^d)$, $\psi \in \shc^{-\beta}(\R^d)$ for $\alpha, \beta > 0$ such that $\alpha - \beta > 0$, one can define the
pointwise product $\phi\psi \in \shc^{-\beta}(\R^d)$.
We also define $\shc^{\gamma +}(\R^d) := \underset{\alpha > \gamma}{\bigcup}\shc^\alpha(\R^d).$
$\shc_c^\gamma(\R^d)$ will denote the set of elements of $\shc^\gamma(\R^d)$ with compact support. Finally we denote $\bar \shc_c^{\gamma}(\R^d)$ the space
$$
\bar \shc_c^{\gamma}(\R^d) := \{\phi \in \shc^\gamma(\R^d)~:~\exists (\phi_n) \subset \shc_c^\gamma(\R^d) ~\textit{such that}~(\phi_n) \rightarrow \phi ~\textit{in}~\shc^{\gamma}(\R^d)\}
$$
and we define the spaces $\bar \shc_c^{\gamma +}(\R^d)$
as $\shc^{\gamma +}(\R^d).$

Let $0 < \beta < \frac{1}{2}$. According
to Theorem 4.5 in \cite{IssoglioRussoDistDriftBesov},
given a Borel probability law on $\R^d$,
there is a unique probability measure $\P$ being solution
to the martingale problem (with distributional
drift)  with respect to
 $(\shd, a,\mu)$,
where
	 	\begin{equation}
			\label{eq:defDaDistri}
			\begin{aligned}
				\shd := \Big\{& \phi \in C\left([0, T], \shc^{(1 + \beta) +}(\R^d)\right) ~:~\exists \vphi \in C\left([0, T], \bar \shc_c^{0 +}(\R^d)\right)~\text{such that}\\
& \phi~\text{is a weak solution of}~a(\phi) = \vphi~\text{and}~\phi(T, .) \in \bar \shc_c^{(1 + \beta)+}(\R^d)\Big\}
            \end{aligned}
		\end{equation}
		and $a(\phi) = \partial_t \phi + \langle \nabla_x \phi, b \rangle + \frac{1}{2}\Delta \phi$ for a drift $b \in C\left([0, T], \shc^{(-\beta) +}(\R^d)\right)$.                
   We remark that $ \langle \nabla_x \phi, b\rangle:= \sum_i \partial_{x_i} \phi  b^i$
and the products $\partial_{x_i} \phi  b^i$ are pointwise products.
\begin{remark} \label{rmk:RegMark}
	\begin{enumerate}
        \item The aforementioned probability measure $\P$ is
          Markovian since it is the Zvonkin transform of a probability measure fulfilling a martingale problem of the same type as the one in the first bullet point in Remark \ref{rmk:MarkovCC}, see Theorem 3.9 in \cite{IssoglioRussoDistDriftBesov}.
		                  
		\item The martingale problem in \cite{IssoglioRussoDistDriftBesov} is stated on the canonical space of the continuous functions on $C([0,T], \R^d)$ instead on $D([0,T], \R^d)$. However, using similar arguments as in the discussion following Remark \ref{rmk:MarkovCC} at the level of the Zvonkin transformed process, one can show that the jump measure is necessarily zero, whenever the martingale problem is formulated in the space of c\`adl\`ag functions.
      \item The canonical process $X$ under the probability measure $\P$
        is a weakly finite quadratic variation process.
                \end{enumerate}
\end{remark}

We are now ready to characterize the solution $\Q$ to Problem
\eqref{eq:variationalFormulation} in this framework. 
\begin{prop}
	\label{prop:markovianDriftDistri}
	Let $(\P, \shd)$ introduced above and
set  $\mu := \shl^\P(X_0)$.
	Let $\nu$ be defined in \eqref{eq:nudx}   and
	$v$  defined in Definition \ref{definitionV}.
	Then the exponential twist $\Q$ given by (\ref{eq:Qstar})
	is solution of a martingale problem $(\shd, a^{\Q}, \nu),$ where
        $\shd$ is given by \eqref{eq:defDaDistri} and
	\begin{equation*}
		a^{\Q}(\phi) = a(\phi) + \left\langle \nabla_x \phi, \frac{\Gamma(v)}{v}\right\rangle,
	\end{equation*}
	where $\Gamma(v) := \left(\Gamma^{v,c}(id_i)\right)_{1 \le i \le d}$ is provided by Proposition \ref{prop:extensionGammaClosure}.
\end{prop}
\begin{proof}  
  Since $\shd$ is an algebra,   Theorem \ref{thm:IdealCond}
  together with Remark \ref{rmk:RegMark} imply that $(\P, \shd)$ fulfills the Ideal Property \ref{cond:idealCondition}. Moreover under $\P$ the canonical process $X$ is a continuous weak Dirichlet process by Proposition 5.11 in \cite{IssoglioRussoDistDriftBesov} applied with $f = id$. In particular, $\P$ verifies Hypothesis \ref{hyp:compensator}
  with $L = 0$.
   Since $\shd$ is dense in $\shc^{0, 1}$ by Lemma 5.7 in \cite{IssoglioRussoDistDriftBesov},
we can apply Proposition \ref{prop:idGamma}
which says that $\scrd = C^{0,1}$ is a closure of $\shd$,
$$ \Gamma^v(\phi) = \langle \nabla_x \phi, \Gamma^{v,c}(id)  \rangle, \  \forall \phi \in \scrd $$ 
and $\Gamma^{v,c}(id)$  is provided by  Proposition \ref{prop:extensionGammaClosure}.
The result then follows from Theorem \ref{th:markovDrift}.
\end{proof}

\appendix
\section*{Appendices}

\section{Measurability and generalized conditional expectation
}
\setcounter{equation}{0}
\renewcommand\theequation{A.\arabic{equation}}




	We need the following extension of the conditional expectation to the case of non-negative, not necessarily integrable random variables. We refer to Remark 39, Chapter I in \cite{DellacherieMeyer_I_IV}.

\begin{prop}
	\label{prop:generalizedCondExp}
	(Generalized conditional expectation). Let $(\Omega, \F, \P)$ be a probability space. Let $Y$ be a \textbf{non-negative} random variable on $(\Omega, \F)$. Let $\shg \subset \F$ be a sub-$\sigma$-algebra of $\F$. There exists a unique non-negative $\shg$-measurable
        random variable with values in $[0, + \infty]$, denoted $\E[Y | \shg]$, such that for $\E[\1_A Y] = \E[\1_A \E[Y | \shg]]$ for all $A \in \shg$.
\end{prop}

\medskip
	\begin{prop}
		\label{prop:markovPropGen}
		Let $\P \in \shp(\Omega)$ be a probability measure satisfying Hypothesis \ref{hyp:markovProp} (Markov property). For all $t \in [0, T]$, $F \in \shb(D([t, T], \R^d), [0, + \infty])$, we have
		\begin{equation}
			\label{eq:markovPropGen}
			\E^{\P}[F((X_r)_{r \in [t, T]}) | \shf_t] = \E^{\P}[F((X_r)_{r \in [t, T]}) | X_t] \quad \P\text{-a.s.}
		\end{equation}
	\end{prop}
	\begin{proof}
		Let $t \in [0, T]$, $F \in \shb(D([t, T], \R^d), [0, + \infty])$. Let $n \in \N^*$. Applying \eqref{eq:markovPropDef} to $F = F \wedge n$ yields
		$$
		\E^{\P}[F((X_r)_{r \in [t, T]}) \wedge n | \shf_t] =
                \E^{\P}[F((X_r)_{r \in [t, T]}) \wedge n | X_t] \quad \P\text{-a.s.}
		$$
and letting $n \rightarrow + \infty$ in the previous equality, we get \eqref{eq:markovPropGen} by the monotone convergence theorem for the conditional expectation.
	\end{proof}

        \medskip

We conclude the section  with two lemmas concerning the measurability of the conditional
expectation.

\begin{lemma}
	\label{lemma:measurability1F}
	Let $t \in [0, T]$ and let $F \in \shf_{t, T}$. Then the random variable $\E^\P\left[\1_F \middle | \shf_r\right]$ is $\shf_{t, r}^\P$-measurable for any $r \in [t, T]$.
\end{lemma}
\begin{proof}
	Let $r \in [t, T]$. Let $F \in \shf_{t, T}$. Assume first that $F = F_{t, r} \cap F_{r, T}$ for some $F_{t, r} \in \shf_{t, r}$ and $F_{r, T} \in \shf_{r, T}$. Then since $\1_{F_{t, r}}$ is $\shf_r$-measurable, we have
		$$
		\E^\P\left[\1_F\middle | \shf_r\right] = \1_{F_{t, r}}\E^\P\left[\1_{F_{r, T}} \middle | \shf_r\right] = \1_{F_{t, r}}\E^\P\left[\1_{F_{r, T}} \middle | X_r\right]~\P\text{-a.s.},
		$$
		where we used the Markov property \ref{hyp:markovProp} to get to the last equality. Since $\1_{F_{t, r}}$ is $\shf_{t, r}$-measurable and $\E^\P\left[\1_{F_{r, T}} \middle | X_r\right]$ is $\sigma(X_r)$-measurable (hence $\shf_{t, r}$-measurable), we get that $\E^\P\left[\1_F\middle | \shf_r\right]$ is $\shf_{t, r}^\P$-measurable. Moreover, the set
		$$
		\Lambda := \left\{F \in \shf_{t, T}~\middle | \E^\P\left[\1_F\middle | \shf_r\right] ~\text{is}~\shf_{t, r}^\P\text{-measurable}\right\}
		$$
		is clearly a $\lambda$-system, see Section 3. of
Chapter 1, in \cite{billingsley}. By Lemma \ref{lemma:piSystem} together with Remark \ref{rmk:piSystem}, it follows from what precedes that $\Lambda$ contains a $\pi$-system generating $\shf_{t, T}$. By the Dynkin monotone class theorem, see e.g. Theorem 3.2, Chapter 1, in \cite{billingsley}, we conclude that $\Lambda = \shf_{t, T}$.
\end{proof}

	\begin{lemma}
		\label{lemma:measurablility}
		Let $(A_t)_{t \in [0, T]}$ be a measurable non-negative process and let $\shg \subset \shf$ be a sub-$\sigma$-field. Then there is
          a $[0,T]\times \shg$-measurable process $(\eta_t)$ such that
\begin{equation} \label{eq:measurability}
  \eta_t = E(A_t \vert \shg), \ {\rm a.s.} \  \forall t \in [0,T].
  \end{equation}
	\end{lemma}
	\begin{proof}
          We will make use of  functional monotone class arguments.
          \begin{itemize}
          \item Suppose that $A_t = \sum_i g_i(t) A^i(\omega), t \in [0,T]$,
            where the $g_i$ (resp. the  $A^i$) are Borel real functions on $[0,T]$
            (resp. a $\shg$-measurable r.v.).
            In this case we set
            $\eta(t) := \sum_i g_i(t) \E^\P(A^i \vert \shg)$. 
          \item Suppose that $A$ is bounded. Then the
            result follows by the functional monotone class theorem,
            see e.g. Theorem 21 of Chapter 14-I in \cite{meyer}, with
            $\shc$ is the set of simple functions of previous item,
            $\shh$ is the space of $\shb([0,T]) \otimes \shg$
            functions fulfilling \eqref{eq:measurability}.
       \item For the general case we set 
         $\eta(t) := \liminf_{n\rightarrow +\infty} \eta^n(t),$
       where
          $ \eta^n(t) = E^\P(A_t \wedge n \vert \shg),$
          for all $t \in [0,T]$.
            \eqref{eq:measurability} can be established via the
                monotone convergence theorem for the conditional expectation.
              \end{itemize}

       \end{proof}

        \section{Markov canonical classes and
     Markov property   }
        
\setcounter{equation}{0}
\renewcommand\theequation{B.\arabic{equation}}

In this section we introduce the notion of Markov canonical class
and we prove that whenever the reference probability $\P$
is associated with a Markov canonical class,
then it is Markovian, i.e. it fulfills
Hypothesis \ref{hyp:markovProp}
In the literature the notion of Markov canonical class often appears as  the suitable tool for describing solutions of
stochastic differential equations in law.

\begin{definition}
	\label{def:markovCanonicalClass}
	(Markov canonical class).
	Let $(\P^{s, x})_{(s, x) \in [0, T] \times \R^d}$ be a set of probability measures on $(\Omega, \F)$
	with corresponding expectation operator maps $(\E^{s, x})_{(s, x) \in [0, T] \times \R^d}$.
	$(\P^{s, x})_{(s, x) \in [0, T] \times \R^d}$ is called a \textit{Markov canonical class} if $\P^{s, x}(X_r = x, 0 \le r \le s) = 1$ for all $(s, x) \in [0, T] \times \R^d$ and for any $s \le t \le u \le T$, $A \in \B(\R^d)$, $x \rightarrow \P^{s, x}(A)$ is Borel and
	\begin{equation}
		\label{eq:markovPropCondProba}
		\P^{s, x}(X_u \in A~|~\F_t) = \P^{t, X_t}(A) \quad \P^{s, x}\text{-a.s.}
	\end{equation}
        We will say that $(\P^{s, x})_{(s, x) \in [0, T] \times \R^d}$ is \textit{measurable in time} if $(s, x) \mapsto \P^{s, x}(A)$ is Borel for all $A \in \F$.
\end{definition}

  \begin{definition}
	\label{def:regularlyMarkovian}
	(Regularly Markovian). A probability measure $\P \in \Pma(\Omega)$ is said to be \textit{Regularly Markovian}
        if there exists a  measurable in time Markov canonical class $(\P^{s, x})_{(s, x) \in [0, T] \times \R^d}$ and a probability measure $\mu \in \Pma(\R^d)$ such that $\P = \int_{\R^d}\P^{0, x}\mu(dx)$.
        The mentioned Markov canonical class will be said  associated with $\P$ and
        $\mu$ will be the \textit{initial law}.
      \end{definition}

      \begin{remark}
	\label{rmk:measurabilityMarkovExpect}
        Let $(\P^{s, x})_{(s, x) \in [0, T] \times \R^d}$ be a Markov canonical class measurable in time. Let $Z$ be a random variable such that
        $\E^{s, x}[Z]$ is well-defined for all $(s, x) \in [0, T] \times \R^d$.
Then we have the following.
        \begin{enumerate}
\item Then $(s, x) \mapsto \E^{s, x}[Z]$ is Borel.
   The proof of that fact can be found in \cite{MartingaleAF}, Proposition 3.10.
 \item  For any $s \le t \le u \le T$, $f: \R^d \rightarrow \R$ bounded or non-negative Borel
   function,  we have
\begin{equation}
		\label{eq:markovPropCondProbaExp}
		\E^{s, x}(f(X_u)|~\F_t) = \E^{t, X_t}(f(X_u)) \quad \P^{s, x}\text{-a.s.}
	\end{equation}
Indeed, when 
    $f = \1_A$, $A \in \B(\R^d)$
 this follows by  the definition of a Markov canonical class, see \eqref{eq:markovPropCondProba}.
        Then by pointwise approximation of any positive function $f \in \B_b(\R^d, \R^+)$ by an increasing sequence
        of simple functions and the monotone convergence theorem for the conditional expectation,
        the property is also true for any $f \in \B_b(\R^d, \R^+)$. This extends to any $f \in \shb_b(\R^d, \R)$ by setting $f = f_+ - f_-$.
\end{enumerate}
      \end{remark}

The objective of the rest of the  section is to prove that a reference probability $\P$,
which is Regularly Markovian, verifies the Markov
property.

\begin{prop}
	\label{prop:PsxSatisfyMarkov}
	Let $(\P^{s, x})_{(s, x) \in [0, T] \times \R^d}$ be a Markov canonical class in the sense of Definition \ref{def:markovCanonicalClass}. Then for all $0 \le s \le t \le T$, $F \in \B_b(D([t, T], \R^d), \R)$ it holds that
	$$
	\E^{s, x}\left[F((X_u)_{u \in [0, T]})~|~\F_t\right] = \E^{t, X_t}[F((X_u)_{u \in [0, T]})].
	$$
\end{prop}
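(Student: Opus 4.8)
The plan is to promote the one-dimensional marginal Markov property \eqref{eq:markovPropCondProba} to a statement about functionals of the whole path by a functional monotone class argument, with the induction carried out on cylindrical functionals. Fix $s \in [0,T]$; I will in fact prove the identity for every $t \in [s, T]$ simultaneously, which is needed because each application of \eqref{eq:markovPropCondProba} shifts the starting time. For $t \in [s,T]$ and $F$ a bounded $\sigma(X_u, u \in [t,T])$-measurable random variable, the map $y \mapsto g_F(y) := \E^{t,y}[F]$ is a well-defined bounded Borel function by Remark \ref{rmk:measurabilityMarkovExpect} item 1 (applied with starting time $t$); let $\mathcal{H}_t$ be the set of such $F$ satisfying $\E^{s,x}[F \mid \F_t] = g_F(X_t)$, $\P^{s,x}$-a.s., for every $x \in \R^d$. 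Each $\mathcal{H}_t$ is a linear space containing the constants and closed under nonnegative bounded monotone limits (if $0 \le F_n \uparrow F$ then $g_{F_n} \uparrow g_F$ by monotone convergence and $\E^{s,x}[F_n \mid \F_t] \uparrow \E^{s,x}[F \mid \F_t]$), so by the functional monotone class theorem it suffices to show that $\mathcal{H}_t$ contains the multiplicative class of cylindrical functionals $F = \prod_{i=1}^n f_i(X_{t_i})$, $t \le t_1 < \cdots < t_n \le T$, $f_1, \dots, f_n \in \B_b(\R^d, \R)$, which generates $\sigma(X_u, u \in [t,T])$.

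The verification proceeds by induction on $n$. For $n = 1$, \eqref{eq:markovPropCondProba} taken at $u = t_1$ gives $\P^{s,x}(X_{t_1} \in A \mid \F_t) = \P^{t, X_t}(X_{t_1} \in A) = \E^{t, X_t}[\1_A(X_{t_1})]$ for all $A \in \B(\R^d)$, hence $\E^{s,x}[f_1(X_{t_1}) \mid \F_t] = \E^{t, X_t}[f_1(X_{t_1})]$ first for indicators and then, by linearity and a monotone class argument, for all $f_1 \in \B_b(\R^d, \R)$; since this is exactly $g_F(X_t)$, the case $n = 1$ holds for every $t \in [s,T]$. For the inductive step, write $F = \big(\prod_{i=1}^{n-1} f_i(X_{t_i})\big) f_n(X_{t_n})$ and condition first on $\F_{t_{n-1}} \supseteq \F_t$. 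The factor $\prod_{i=1}^{n-1} f_i(X_{t_i})$ is $\F_{t_{n-1}}$-measurable, while the case $n = 1$ (with starting time $t_{n-1} \in [s,T]$) gives $\E^{s,x}[f_n(X_{t_n}) \mid \F_{t_{n-1}}] = h(X_{t_{n-1}})$, where $h(y) := \E^{t_{n-1}, y}[f_n(X_{t_n})]$ is bounded Borel. Hence, by the tower property, $\E^{s,x}[F \mid \F_t] = \E^{s,x}[\widetilde F \mid \F_t]$ with $\widetilde F := \big(\prod_{i=1}^{n-2} f_i(X_{t_i})\big) \widetilde f(X_{t_{n-1}})$ and $\widetilde f := f_{n-1}\, h \in \B_b(\R^d, \R)$, a cylindrical functional with $n - 1$ time points. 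The induction hypothesis yields $\E^{s,x}[\widetilde F \mid \F_t] = g_{\widetilde F}(X_t)$, and running the identical peeling computation under $\P^{t, y}$ shows $g_{\widetilde F}(y) = \E^{t,y}[\widetilde F] = \E^{t,y}[F] = g_F(y)$. This closes the induction, and the monotone class theorem concludes the proof.

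I do not expect any genuine obstruction here; the only delicate point is the measurability bookkeeping, namely that at every stage the partial expectations $h$, $\widetilde f$ and $g_{\widetilde F}$ are Borel functions of the current position — which is precisely what Remark \ref{rmk:measurabilityMarkovExpect} (Proposition 3.10 in \cite{MartingaleAF}) guarantees — together with the need to keep track of which base measure ($\P^{s,x}$ or $\P^{t_{n-1},\cdot}$) each conditional expectation is computed under.
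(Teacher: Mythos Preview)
Your proof is correct and follows essentially the same route as the paper: an induction on product cylindrical functionals by peeling off the last factor via the tower property and the one-step Markov property \eqref{eq:markovPropCondProba}, followed by a functional monotone class argument. The paper separates this into an intermediate lemma (for finite-dimensional marginals $f(X_{t_1},\dots,X_{t_n})$) before the monotone class step, whereas you fold both into a single argument, but the substance is identical.
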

We first prove a weaker version of this proposition in order to apply a functional version of the monotone class
lemma to prove Proposition \ref{prop:PsxSatisfyMarkov}.

\begin{lemma}
	\label{lemma:inductionMarkov}
	For all $n \ge 1$, $t \le t_1 \le \ldots \le t_n \le T$, $f \in \B_b((\R^d)^n, \R)$,
        $$\E^{s, x}[f(X_{t_1}, \ldots, X_{t_n})~|~\F_t] = \E^{t, X_t}\left[f(X_{t_1}, \ldots, X_{t_n})\right] \ \P^{s, x}{\text -a.s.}$$
\end{lemma}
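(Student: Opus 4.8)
The plan is to prove the statement by induction on $n$, using essentially only two inputs: the one‑dimensional transition identity \eqref{eq:markovPropCondProba} from Definition \ref{def:markovCanonicalClass}, and the Borel measurability of $(s,x)\mapsto\E^{s,x}[Z]$ provided by Remark \ref{rmk:measurabilityMarkovExpect}; the passage from indicators and product functions to arbitrary bounded Borel functions is handled by a functional monotone class theorem.

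\emph{Base case $n=1$.} For $f=\1_A$, $A\in\B(\R^d)$, and $t\le t_1\le T$, one has $\E^{s,x}[\1_A(X_{t_1})\mid\F_t]=\P^{s,x}(X_{t_1}\in A\mid\F_t)=\P^{t,X_t}(X_{t_1}\in A)=\E^{t,X_t}[\1_A(X_{t_1})]$ $\P^{s,x}$-a.s., which is exactly \eqref{eq:markovPropCondProba}. The family of $f\in\B_b(\R^d,\R)$ for which $\E^{s,x}[f(X_{t_1})\mid\F_t]=\E^{t,X_t}[f(X_{t_1})]$ holds is a vector space, stable under bounded monotone limits (conditional and ordinary monotone convergence) and containing all indicators; since $y\mapsto\E^{t,y}[f(X_{t_1})]$ is Borel by Remark \ref{rmk:measurabilityMarkovExpect}(1), a monotone class argument extends the identity to all $f\in\B_b(\R^d,\R)$.

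\emph{Inductive step.} Assuming the statement for $n$, fix $f\in\B_b((\R^d)^{n+1},\R)$ and $t\le t_1\le\cdots\le t_{n+1}\le T$. First I would establish the conditional identity
$$\E^{s,x}[f(X_{t_1},\ldots,X_{t_{n+1}})\mid\F_{t_1}]=\Psi(X_{t_1})\ \ \P^{s,x}\text{-a.s.},\qquad \Psi(y):=\E^{t_1,y}[f(X_{t_1},\ldots,X_{t_{n+1}})],$$
with $\Psi\in\B_b(\R^d,\R)$ by Remark \ref{rmk:measurabilityMarkovExpect}(1). For product functions $f=h\otimes g$, $h\in\B_b(\R^d,\R)$, $g\in\B_b((\R^d)^n,\R)$, one pulls out the $\F_{t_1}$-measurable factor $h(X_{t_1})$ and applies the induction hypothesis (conditioning time $t_1$, times $t_1\le t_2\le\cdots\le t_{n+1}$) to rewrite $\E^{s,x}[g(X_{t_2},\ldots,X_{t_{n+1}})\mid\F_{t_1}]$ as $\E^{t_1,X_{t_1}}[g(X_{t_2},\ldots,X_{t_{n+1}})]$; using $\P^{t_1,y}(X_r=y,\,0\le r\le t_1)=1$ the resulting expression $h(X_{t_1})\E^{t_1,X_{t_1}}[g(\cdots)]$ is identified with $\Psi(X_{t_1})$. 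The functional monotone class theorem, applied to the multiplicative class of such product functions, which generates $\B((\R^d)^{n+1})$, then extends the conditional identity to all $f\in\B_b((\R^d)^{n+1},\R)$.

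Finally, by the tower property and this conditional identity, $\E^{s,x}[f(X_{t_1},\ldots,X_{t_{n+1}})\mid\F_t]=\E^{s,x}[\Psi(X_{t_1})\mid\F_t]$, and the base case applied to the bounded Borel function $\Psi$ and times $t\le t_1$ gives $\E^{s,x}[\Psi(X_{t_1})\mid\F_t]=\E^{t,X_t}[\Psi(X_{t_1})]$ $\P^{s,x}$-a.s. Since the conditional identity above was derived using only the Markov canonical class structure and the induction hypothesis, both valid for every $\P^{t,y}$, taking $\P^{t,y}$-expectations there yields $\E^{t,y}[\Psi(X_{t_1})]=\E^{t,y}[f(X_{t_1},\ldots,X_{t_{n+1}})]$ for all $(t,y)$; hence $\E^{t,X_t}[\Psi(X_{t_1})]=\E^{t,X_t}[f(X_{t_1},\ldots,X_{t_{n+1}})]$, which closes the induction. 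The computations are routine; the points requiring care are the Borel measurability of the auxiliary maps $y\mapsto\E^{t,y}[\,\cdot\,]$ (granted by Remark \ref{rmk:measurabilityMarkovExpect}) and the two monotone class extensions, and the one mildly non‑obvious move is the reduction of the $(n+1)$-dimensional functional to \eqref{eq:markovPropCondProba}: condition on $\F_{t_1}$ first, apply the inductive hypothesis to the remaining $n$ coordinates, and only then invoke the one‑dimensional transition property once more for the outer expectation over $X_{t_1}$.
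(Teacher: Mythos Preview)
Your proof is correct and follows the same inductive/monotone-class strategy as the paper, but with the induction organized differently: the paper peels off the \emph{last} coordinate by conditioning on $\F_{t_{n-1}}$ (working throughout with test sets $B\in\F_t$ and the unconditional identities $\E^{s,x}[\1_B\cdots]=\E^{s,x}[\1_B\E^{t,X_t}[\cdots]]$), whereas you peel off the \emph{first} coordinate by conditioning on $\F_{t_1}$ and then apply the base case once more. Both routes rest on the same ingredients (the one-step transition identity \eqref{eq:markovPropCondProba}, Borel measurability from Remark \ref{rmk:measurabilityMarkovExpect}, and a monotone-class extension), and neither gains materially over the other; your formulation is slightly more direct since it manipulates conditional expectations rather than testing against $B\in\F_t$, while the paper's version keeps the induction hypothesis in product form throughout, which makes the intermediate function $\tilde f_{n-1}(x)=f_{n-1}(x)\E^{t_{n-1},x}[f_n(X_{t_n})]$ manifestly a single factor.
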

\begin{proof}
  Let first $f_1, \ldots, f_n$ belong to $ \B_b(\R^d, \R^+)$,
  $t \le t_1 \le \ldots \le t_n \le T$. Let $B \in \F_t$.
We first prove by induction that
\begin{equation}		
	\label{eq:intermediateStepMonoClassFunctions}
        \E^{s, x}[\1_B f_1(X_{t_1}) \ldots  f_n(X_{t_n})] =
\E^{s, x}\left[\1_B\E^{t, X_t}\left[f_1(X_{t_1}) \ldots f_n(X_{t_n})\right]\right].
	\end{equation}
        For $n = 1$, the property holds by Remark \ref{rmk:measurabilityMarkovExpect}
        item 2.

      Let now $n \ge 2$ and assume next that the property
\eqref{eq:intermediateStepMonoClassFunctions}
holds for $n - 1$.
By the tower property of the conditional expectation as well as
by Remark \ref{rmk:measurabilityMarkovExpect}
        item 2.
\begin{equation*}
		\begin{aligned}
                  \E^{s, x}[\1_B f_1(X_{t_1}) \ldots f_n(X_{t_n})]
  & = \E^{s, x}\left[\1_B
f_1(X_{t_1}) \ldots f_{n-1}(X_{t_{n-1}}) \E^{s, x}[f_n(X_{t_n})~|~\F_{t_{n - 1}}]\right]\\
         & = \E^{s, x}\left[\1_B
f_1(X_{t_1}) \ldots f_{n-1}(X_{t_{n-1}})
           \E^{t_{n-1},X_{t_{n-1}}}[f_n(X_{t_{n}})]\right].
	\end{aligned}
	\end{equation*}
        Now the function 
	$$
	f : (x_1, \ldots, x_{n - 1}) \in (\R^{d})^{n - 1} \mapsto f_1(x_1) \ldots f_n(x_{n - 1})
 \E^{t_{n-1}, x_{n-1}}[f_n(X_{t_n})]
        $$
	belongs to $\B_b((\R^{d})^{n - 1}, \R)$. By 
 the tower property and the induction step $n-1$
 we get \eqref{eq:intermediateStepMonoClassFunctions} for the integer $n$.


 From the linearity and the monotone convergence theorem of the conditional expectation, we see that the class
 $\Lambda := \{A \in \B(\R^d)^{\otimes n}~|~\E^{s, x}[\1_B\1_A] = \E^{s, x}\left[\1_B\E^{t, X_t}[\1_A]\right]\}$ is a monotone class ($\lambda$-system).
 From
\eqref{eq:intermediateStepMonoClassFunctions},
 applied with $f_k = \1_{A_k}$ for some $A_k \in \B(\R^d)$, $1 \le k \le n$, we see that $\Lambda$ contains the $\pi$-system $\mathcal{P} := \{A \in \B(\R^d)^{\otimes n}~|~A = A_1 \times \ldots \times A_n, A_1, \ldots, A_n \in \B(\R^d)\}$.
 Since $\sigma(\mathcal{P}) = \mathcal{B}(\R^d)^{\otimes n}$ it follows from
  Theorem 3.2, Chapter 1, in \cite{billingsley}
 that
 $\Lambda  = \B(\R^d)^{\otimes n}.$ Finally by approximation of any positive function $f \in \B_b((\R^d)^n, \R^+)$ by an increasing sequence of simple functions and the monotone convergence theorem for the conditional expectation, it holds for any $f \in \B_b((\R^d)^n, \R^+)$ that
	\begin{equation} \label{eq:induc}
	\E^{s, x}\left[\1_Bf(X_{t_1}, \ldots, X_{t_n}))\right] = \E^{s, x}\left[\1_B\E^{t, X_t}[f(X_{t_1}, \ldots, X_{t_n})]\right],
      \end{equation}
  and \eqref{eq:induc} can be extended to any $f \in \B_b((\R^d)^{n}, \R)$, by setting $f = f_+ - f_-$.
        Finally the induction property is verified and the conclusion follows.
\end{proof}
\begin{proof}
	[Proof of Proposition \ref{prop:PsxSatisfyMarkov}.] Let 
	$$
	\shh := \left\{F \in \B_b(C([t, T]), \R)~|~\E^{s, x}\left[F((X_u)_{u \in [0, T]})~|~\F_t\right] = \E^{t, X_t}[F((X_u)_{u \in [0, T]})]\right\}.
	$$
	By linearity of the conditional expectation, $\shh$ is a vector space. By monotone convergence of the conditional expectation, if $(F_n)_{n \ge 1}$ is a
       non-negative increasing  sequence of elements of $\mathcal{H}$ such that $0 \le F_n \le F_{n + 1}$ for all $n \ge 1$, then $\sup_{n \ge 1} F_n \in \mathcal{H}$. Finally let $\mathcal{C}$ be the class of all cylindrical sets on $D([t, T], \R^d)$, that is
	$$
	\mathcal{C} = \left\{\{X_{t_1} \in A_1, \ldots, X_{t_n} \in A_n\}~|~n \in \N, t \le t_1 \le \ldots \le t_n, A_1, \ldots, A_n \in \B(\R^d)\right\}.
	$$
	Then we get from Lemma \ref{lemma:inductionMarkov} that $\1_C \in \mathcal{H}$ for all $C \in \mathcal{C}$ and by Theorem 21, Chapter I in \cite{DellacherieMeyer_I_IV} $\mathcal{H} = \B_b(D[t, T], \R)$.
      \end{proof}
      We generalize slightly Proposition \ref{prop:PsxSatisfyMarkov} in the case when $F$ is a non-negative measurable function, not necessarily bounded.
      We recall to this aim the existence of a generalized version of the conditional expectation for non-negative random variable, see Proposition
      \ref{prop:generalizedCondExp}. 

\begin{prop}
	\label{prop:markovGenCond}
	Let $(\P^{s, x})_{(s, x) \in [0, T] \times \R^d}$ be a Markov canonical class in the sense of Definition \ref{def:markovCanonicalClass}. Let $F \in \B(D([t, T], \R^d), [0, + \infty])$. Then
	\begin{equation} \label{eq:GenCond}
	\E^{s, x}\left[F((X_u)_{u \in [t, T]})\middle| \F_t\right] = \E^{t, X_t}\left[F((X_u)_{u \in [t, T]})\right]~\P^{s, x}\text{-a.s.}
	\end{equation}
\end{prop}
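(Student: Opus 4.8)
The plan is to reduce to the bounded case, which is exactly Proposition \ref{prop:PsxSatisfyMarkov}, by a standard truncation argument, and then to pass to the limit using a monotone convergence theorem for the generalized conditional expectation of Proposition \ref{prop:generalizedCondExp}. Concretely, for each $n \ge 1$ I would set $F_n := F \wedge n$, so that $F_n \in \B_b(D([t, T], \R^d), [0,+\infty[)$ and $F_n \uparrow F$ pointwise on $D([t,T],\R^d)$ as $n \to +\infty$. Proposition \ref{prop:PsxSatisfyMarkov} applied to $F_n$ then gives, for every $(s,x)$ with $s \le t$,
\[
\E^{s, x}\left[F_n((X_u)_{u \in [t, T]})\,\middle|\, \F_t\right] = \E^{t, X_t}\left[F_n((X_u)_{u \in [t, T]})\right] \qquad \P^{s, x}\text{-a.s.}
\]

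Next I would let $n \to +\infty$ on both sides. On the right-hand side, for each fixed $y \in \R^d$ the ordinary monotone convergence theorem yields $\E^{t, y}[F_n((X_u)_{u \in [t,T]})] \uparrow \E^{t, y}[F((X_u)_{u \in [t,T]})]$; since $F_n$ is bounded, Remark \ref{rmk:measurabilityMarkovExpect} item 1 shows that $y \mapsto \E^{t, y}[F_n((X_u)_{u \in [t,T]})]$ is Borel, hence so is its increasing pointwise supremum $y \mapsto \E^{t, y}[F((X_u)_{u \in [t,T]})]$, which therefore defines a well-posed $[0,+\infty]$-valued $\F_t$-measurable random variable $\E^{t, X_t}[F((X_u)_{u \in [t,T]})]$, and $\E^{t, X_t}[F_n] \uparrow \E^{t, X_t}[F]$ pointwise on $\Omega$. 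On the left-hand side, one invokes monotone convergence for the generalized conditional expectation: if $0 \le Y_n \uparrow Y$ are nonnegative random variables, then $\E^{\P}[Y_n \mid \shg] \uparrow \E^{\P}[Y \mid \shg]$ $\P$-a.s.; this follows from the defining property $\E^{\P}[\1_A Y_n] = \E^{\P}[\1_A \E^{\P}[Y_n \mid \shg]]$, ordinary monotone convergence on both sides, and the fact that an increasing limit of $\shg$-measurable functions is $\shg$-measurable (see also Remark 39, Chapter I in \cite{DellacherieMeyer_I_IV}). Applying this with $Y_n = F_n((X_u)_{u \in [t,T]})$, $\shg = \F_t$, $\P = \P^{s,x}$ gives $\E^{s,x}[F_n((X_u)_{u\in[t,T]}) \mid \F_t] \uparrow \E^{s,x}[F((X_u)_{u\in[t,T]}) \mid \F_t]$ $\P^{s,x}$-a.s.

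Passing to the limit in the displayed identity for $F_n$ then yields \eqref{eq:GenCond}. The only genuinely delicate points are the justification of the monotone convergence theorem for the generalized conditional expectation and the accompanying measurability bookkeeping (in particular that the a.s.\ limits on the two sides can be matched on a common full-measure set); the rest is a routine truncation.
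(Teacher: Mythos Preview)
Your proof is correct and follows essentially the same approach as the paper: truncate $F$ by $F\wedge n$, invoke Proposition \ref{prop:PsxSatisfyMarkov} for the bounded case, and pass to the limit via monotone convergence on both sides (ordinary on the right, for the generalized conditional expectation on the left). You in fact supply slightly more detail than the paper does, notably the Borel measurability of $y\mapsto \E^{t,y}[F((X_u)_{u\in[t,T]})]$ via Remark \ref{rmk:measurabilityMarkovExpect} and the supremum of Borel functions, which the paper leaves implicit.
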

\begin{proof}
  Let $n \in \N$. By Proposition \ref{prop:PsxSatisfyMarkov},
\eqref{eq:GenCond} holds for $F$ replaced by $F\wedge n$, and we have
	\begin{equation}
		\label{eq:markovPropN}
		\E^{s, x}\left[F((X_u)_{u \in [t, T]})\wedge n\middle| \F_t\right] = \E^{t, X_t}\left[F((X_u)_{u \in [t, T]})\wedge n \right]~\P^{s, x}\text{-a.s.}
	\end{equation}
	On the one hand, by the  monotone convergence theorem for the conditional expectation,
$\P^{s, x}\text{-a.s.}$ we have
	\begin{equation}
		\label{eq:monotoneConvExp}
		\E^{s, x}\left[F((X_u)_{u \in [t, T]})\wedge n\middle| \F_t\right] \underset{n \rightarrow + \infty}{\longrightarrow} \E^{s, x}\left[F((X_u)_{u \in [t, T]})\middle| \F_t\right].
	\end{equation}
	On the other hand, for all $y \in \R^d$, by monotone convergence
	$$
	\E^{t, y}\left[F((X_u)_{u \in [t, T]})\wedge n\right] \underset{n \rightarrow + \infty}{\longrightarrow} \E^{t, y}\left[F((X_u)_{u \in [t, T]})\right],
	$$
	hence
	\begin{equation}
		\label{eq:monotoneConvExp2}
		\E^{t, X_t}\left[F((X_u)_{u \in [t, T]})\wedge n \right] \underset{n \rightarrow + \infty}{\longrightarrow} \E^{t, X_t}\left[F((X_u)_{u \in [t, T]})\right] ~\P^{s, x}\text{-a.s.}
	\end{equation}
	We emphasize that the conditional expectation in the right-hand side of \eqref{eq:monotoneConvExp} and \eqref{eq:monotoneConvExp2} are to be understood in the sense of Proposition \ref{prop:generalizedCondExp}.
	This shows the validity of \eqref{eq:GenCond}.
\end{proof}
\begin{prop}
	\label{lemma:markovPropGeneral}
	Let $\P \in \Pma(\Omega)$ be a Regularly Markovian probability measure in the sense of Definition \ref{def:regularlyMarkovian} with associated Markov
        canonical class $(\P^{s, x})_{(s, x) \in [0, T] \times \R^d}$ and initial law $\mu$. Then for all $t \in [0, T]$, $F \in \B_b(D([t, T], \R^d), \R)$,
	\begin{equation*}
		\E^{\P}[F((X_u)_{u \in [t, T]})|\F_t] = \E^{\P}[F((X_u)_{u \in [t, T]})|X_t]
		= \E^{t, X_t}[F((X_u)_{u \in [t, T]})] \quad \P\text{-a.s.}
	\end{equation*}
In particular $\P$ verifies the Markov Property Hypothesis \ref{hyp:markovProp}.
      \end{prop}
\begin{proof}
  We set $Z := F((X_u)_{u \in [t, T]})$. Let $A \in \F_t$. By definition of $\P$ as well as by Proposition \ref{prop:PsxSatisfyMarkov} 
we have
	\begin{equation*}
		\begin{aligned}
			\E^\P\left[Z\1_A\right] & = \int_{\R^d}\E^{0, x}[Z\1_A]\mu(dx)\\
			& = \int_{\R^d}\E^{0, x}\left[\E^{0, x}[Z|\F_t]\1_A\right]\mu(dx)\\
			& = \int_{\R^d}\E^{0, x}[\E^{t, X_t}[Z]\1_A]\mu(dx)\\
			& = \E^\P\left[\E^{t, X_t}[Z]\1_A\right]
		\end{aligned}
	\end{equation*}
	and the conclusion follows immediately from the last equality by
     the definition of the conditional expectation.
   \end{proof}
When $\P$ is Regularly Markovian, then a ''pointwise'' version of
  the intrinsic value function can be naturally provided.  

  \begin{prop}
    \label{lemma:MvH2}
    For every $(s,x) \in [0,T]  \times \R^d$, the function
\begin{equation}
	\label{eq:definitionV}
	v(s,x) := \E^{s, x}\left[\exp\left(-\int_s^T f(r, X_r)dr - g(X_T)\right)\right],
      \end{equation}
is an intrinsic value function, i.e. it     is a representative of the ''class'' in $L^0$ defined in
     Definition \ref{definitionV}.
     In particular it belongs to $ \B_b([0, T] \times \R^d, \R)$ and fulfills \eqref{eq:representationV}.
 \end{prop}
\begin{proof}
  The function $v$ defined in \eqref{eq:definitionV}
	belongs to
$\B_b([0, T] \times \R^d, \R)$ by
  Remark \ref{rmk:measurabilityMarkovExpect}. 
  By the Markov property, provided by Proposition
  \ref{lemma:markovPropGeneral}, the result follows.
\end{proof}

      \medskip

      \section{Technical proofs of Section \ref{sec:extension}}

\setcounter{equation}{0}
\renewcommand\theequation{C.\arabic{equation}}
      \subsection{Proof of Lemma \ref{lemma:equivalenceProduct}}
\label{sec:proofsEquivalence}
      
	Let $\phi, \psi \in \shd(\P)$. By integration by parts, for all $t \in [0, T]$,
	\begin{equation}
		\label{eq:ippMPsiMPhi}
		\begin{aligned}
                  (\phi \psi)(t, X_t) & = (\phi \psi)(0, X_0) + \int_0^t \phi_{-}(r, X_{r})d\psi(r, X_r) +
                     \int_0^t \psi_{-}(r, X_{r}) d\phi(r, X_r) + [M[\phi], M[\psi]]_t  \\ 
			&= (\phi \psi)(0, X_0) +  \int_0^t \phi_{-}(r, X_{r})dM[\psi]_r + \int_0^t \phi_{-}(r, X_{r}) a^\P(\psi)(r, X_{r})dr \\
                                      & + \int_0^t \psi_{-}(r, X_{r})dM[\phi]_r + \int_0^t \psi_{-}(r, X_{r})a^\P(\phi)(r, X_{r}) dr + [M[\phi], M[\psi]]_t \\
                  &=  (\phi \psi)(0, X_0) + \int_0^t \phi(r, X_{r}) a^\P(\psi)(r, X_{r})dr 
                    + \int_0^t \psi(r, X_{r})a^\P(\phi)(r, X_{r}) dr \\
                  &+ \langle M[\phi], M[\psi] \rangle_t  + N_t, 
      \end{aligned}
    \end{equation}
    where $N$ is local martingale. This happens because
    $[M[\phi], M[\psi]]$ is a special semimartingale with bounded variation $\langle M[\phi], M[\psi] \rangle$.
    
We now prove the first implication $1. \Rightarrow 2.$ As $\phi \psi \in \shd(\P)$, we  have that
	\begin{equation}
		\label{eq:ippMPsiMPhiBis}
		(\phi \psi)(t, X_t)  = (\phi \psi)(0, X_0) + M_t[\phi \psi] + \int_0^t a^\P(\phi \psi)(r, X_r)dr, t \in [0,T].
       \end{equation}
       \eqref{eq:ippMPsiMPhi} and \eqref{eq:ippMPsiMPhiBis} provide
       two different decompositions of the special semimartingale
       $(\phi \psi)(t, X_t)$, which  allows to show  \eqref{eq:bracket} with \eqref{eq:gammaPsiPhiP}.
       On the other hand $(\phi \psi)(t,X_{t})$ is obviously locally square integrable
       because of \eqref{eq:ippMPsiMPhiBis} and the fact that $M[\phi \psi]$ is
       locally square integrable.
            
       We now prove the converse implication $2. \Rightarrow 1.$ By \eqref{eq:bracket}
       and \eqref{eq:ippMPsiMPhi}, taking into account \eqref{eq:gammaPsiPhiP}, we obviously get
       $$
	(\phi \psi)(t, X_t)  = (\phi \psi)(0, X_0) + N_t  + \int_0^t a^\P(\phi \psi)(r, X_r)dr, t \in [0,T],
        $$
        where $N$ is a local martingale.
       Since $(\phi \psi)(t, X_t)$ is locally square integrable then $N \in \shh^2_{loc}$ and
       so $N = M[\phi\psi)$ which shows that $\phi \psi \in \shd(\P)$.


  \subsection{Proof of Proposition \ref{prop:GammaContinuous}}
\label{prop:GammaContinuousApp}

We first recall the definition of a \textit{quasi-left continuous} process and predictable stopping time.
\begin{definition}
	\label{def:quasiLeftContinuous}
	\begin{enumerate}
		\item	A c\`adl\`ag process $X$ is called quasi-left continuous if $\Delta X_\tau = 0$ a.s. on $\{\tau < + \infty\}$ for all predictable stopping times $\tau$, see Definition 2.25 in Chapter I in \cite{JacodShiryaev}.
	\item      We recall that the notion of predictable stopping time
          is defined  for instance in Definition 2.7, Chapter I in \cite{JacodShiryaev}.
        \item We recall that given a stopping time $\tau$ the $\sigma$-field $\shf_{\tau-}$ is the $\sigma$-field
          generated by $\shf_0$ and the events of the form $A \cup \{t < \tau\}$, where $t \in \R$ and $A \in \shf_t$,
          see Definitions 1.11, Chapter I in \cite{JacodShiryaev}.
          \end{enumerate}
\end{definition}

      We continue with a lemma related to the indistinguishability of stochastic processes.
\begin{lemma}
	\label{lemma:predIndistinguable}
	Assume that $\P$ verifies Hypothesis \ref{hyp:compensator}. Let $\phi \in \shd(\P)$. Let $\Phi$ be the c\`adl\`ag modification of $\phi(\cdot, X_\cdot)$. Then $(\Phi_{t-})_{t \in [0, T]}$ and $(\phi(t, X_{t-}))_{t \in [0, T]}$ are $\P$-indistinguishable.
\end{lemma}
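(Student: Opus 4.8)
The plan is to exploit the fact that, under Hypothesis \ref{hyp:compensator}, the compensator of the jump measure is absolutely continuous in time, which forces the process $X$ to be quasi-left-continuous: jumps cannot occur at predictable times. Concretely, if $\nu^{X,\P}(dt,dq) = dt\,L(t,X_{t-},dq)$, then for any predictable stopping time $\tau$ one has $\P(\Delta X_\tau \neq 0, \tau < \infty) = \E^\P[\nu^{X,\P}(\{\tau\}\times\R^d)] = 0$, since the section $\{\tau\}$ is $dt$-negligible. Hence $X$ is quasi-left-continuous, and consequently so is any c\`adl\`ag modification $\Phi$ of $\phi(\cdot,X_\cdot)$: at a predictable time $\tau$, on $\{\tau<\infty\}$, $\Delta X_\tau = 0$, so $X_{\tau-} = X_\tau$ and $\phi(\tau,X_{\tau-}) = \phi(\tau,X_\tau)$.

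First I would recall that, since $\Phi\in\shh^2_{loc}(\P) + (\text{process of the form }\int_0^\cdot a^\P(\phi)(r,X_r)dr)$ modulo the constant $\phi(0,X_0)$, the process $\Phi$ is a special semimartingale; in particular the processes $(\Phi_{t-})$ and $(\Delta\Phi_t)$ are well-defined, and $\Phi_{t-}$ is the left-continuous (hence predictable) modification. The key point is then that the two processes $(\Phi_{t-})$ and $(\phi(t,X_{t-}))$, both predictable, can only differ on the (countable, random) set of jump times of $X$ — away from jumps of $X$, continuity of neither process is an issue since $\phi$ need not be continuous, but at a time $t$ that is \emph{not} a jump time of $X$ we have $X_{t-}=X_t$ and, by the very definition of $\Phi$ as a modification of $\phi(\cdot,X_\cdot)$ up to an evanescent set, the identification will propagate. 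More carefully: I would argue that $\{t : \Phi_{t-} \neq \phi(t,X_{t-})\}$ is contained (up to a $\P$-null set) in the union of the graphs of the jump times of $X$. By quasi-left-continuity these jump times can be exhausted by a sequence of \emph{totally inaccessible} stopping times $(T_n)$ together with the empty predictable part; on each such graph $[\![T_n]\!]$, one compares $\Delta\Phi_{T_n}$ with $\phi(T_n,X_{T_n}) - \phi(T_n,X_{T_n-})$. Using that $\Phi$ is a modification of $\phi(\cdot,X_\cdot)$, we get $\Phi_{T_n} = \phi(T_n,X_{T_n})$ a.s. on $\{T_n<\infty\}$, and $\Phi_{T_n-}$ equals the predictable projection, which at a totally inaccessible time carries no information beyond left limits — so the jump contributions match and $\Phi_{T_n-} = \phi(T_n,X_{T_n-})$ a.s.

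The cleanest route, which I would actually follow, is via the predictable projection: both $({}^p(\Phi))$ and $({}^p(\phi(\cdot,X_\cdot)))$ coincide with $\Phi_{t-}$ and $\phi(t,X_{t-})$ respectively at all but countably many times, and since $\Phi = \phi(\cdot,X_\cdot)$ outside an evanescent set, their optional (hence predictable) projections agree, giving $\Phi_{t-} = \phi(t,X_{t-})$ outside an evanescent set — i.e. indistinguishability. Alternatively, and more elementarily: fix $t$; then $\P$-a.s. $\Phi_s = \phi(s,X_s)$ for all rationals $s$, and taking $s\uparrow t$ along rationals avoiding the a.s.-at-most-countably-many jump times of $X$ in a left-neighbourhood of $t$, one gets $\Phi_{t-} = \lim \Phi_s = \lim\phi(s,X_s)$; this limit equals $\phi(t,X_{t-})$ provided $t$ is not a jump time of $X$, which holds $\P$-a.s. by quasi-left-continuity (applied to the predictable — indeed deterministic — time $t$).

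I expect the \textbf{main obstacle} to be the rigorous passage at the jump times of $X$: one must be careful that $\phi$ is merely Borel and locally bounded, not continuous, so $\phi(t,X_{t-})$ is genuinely not the left limit of $\phi(s,X_s)$ in general — the argument crucially relies on quasi-left-continuity to kill exactly the bad times, and on the fact that a deterministic time is predictable so $\Delta X_t = 0$ $\P$-a.s. for each fixed $t$. Packaging ``$\P$-a.s. for each fixed $t$'' into ``indistinguishable'' then requires the standard argument that two predictable (or two c\`adl\`ag-in-an-appropriate-sense) processes that are modifications of each other are indistinguishable, which here applies to $(\Phi_{t-})$ and $(\phi(t,X_{t-}))$ since both are left-continuous along, and hence determined by, their values at rational times together with left limits.
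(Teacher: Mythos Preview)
Your proposal has the right starting point (quasi-left-continuity of $X$ from the absolutely continuous compensator) but the two concrete routes you sketch both have genuine gaps, and the paper's argument is different in a crucial way.

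\textbf{The ``elementary'' route fails.} Taking $s\uparrow t$ along rationals and writing $\Phi_{t-}=\lim\phi(s,X_s)$ is fine, but the claim that this limit equals $\phi(t,X_{t-})$ when $X_{t-}=X_t$ is simply false for Borel $\phi$: even if $X_s\to X_t$, $\phi(s,X_s)$ need not converge to $\phi(t,X_t)$ since $\phi$ is not continuous. You identify this obstacle yourself but then do not overcome it. Relatedly, your final step of upgrading ``a.s.\ for each fixed $t$'' to indistinguishability via ``left-continuous, hence determined by rationals'' does not work: the process $(\phi(t,X_{t-}))_t$ is \emph{not} left-continuous in $t$ (again because $\phi$ is only Borel). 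The correct tool here is the predictable section theorem, which the paper invokes.

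\textbf{The predictable projection route is missing the key step.} You assert that ${}^p\Phi$ coincides with $\Phi_{-}$, but this is not automatic for a c\`adl\`ag adapted process: in general ${}^p\Phi_\tau=\Phi_{\tau-}+\E[\Delta\Phi_\tau\mid\F_{\tau-}]$ at predictable $\tau$. What makes it work here is the \emph{martingale structure}: since $\Phi=M[\phi]+\phi(0,X_0)+\int_0^\cdot a^\P(\phi)(r,X_r)\,dr$ with the last two terms continuous and $\F_{\tau-}$-measurable, and since for a (localized, hence uniformly integrable) martingale one has $M[\phi]_{\tau-}=\E[M[\phi]_\tau\mid\F_{\tau-}]$ at predictable $\tau$ (Jacod--Shiryaev I.2.27), one gets $\Phi_{\tau-}=\E[\Phi_\tau\mid\F_{\tau-}]$. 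This is precisely the step the paper carries out explicitly (with a localizing sequence), and it is the heart of the proof. Once you have this, quasi-left-continuity gives $\Phi_\tau=\phi(\tau,X_\tau)=\phi(\tau,X_{\tau-})$ a.s., the latter is $\F_{\tau-}$-measurable, so $\Phi_{\tau-}=\phi(\tau,X_{\tau-})$ a.s.\ at every predictable $\tau$; the predictable section theorem then yields indistinguishability. Your detour through totally inaccessible times is not needed and does not help.
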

\begin{proof}

Let $\tau$ be a predictable stopping time.  
  By Theorem 86, Chapter IV in \cite{DellacherieMeyer_I_IV}, it will be enough to prove
	\begin{equation}
		\label{eq:equalityPredictableTime2}
		\Phi_{\tau-} = \phi(\tau, X_{\tau-}) \quad \P\text{-a.s.}
              \end{equation}
              on $\{\tau < +\infty \}$. 
We write
	\begin{equation} \label{eq:stopping}
          \Phi_t = M[\phi]_{t} +
     \Phi_0  +   \int_0^{t} a^\P(\phi)(r, X_r)dr, \ t \in [0,T].
	\end{equation}
        Let now $(\shs_n)_{n \ge 1}$ be a localizing sequence for $M[\phi]$ verifying $\int_0^{T \wedge \shs_n} |a^\P(\phi)(r, X_r)|dr \le n$.
        It will be sufficient to prove
        	\begin{equation}
		\label{eq:equalityPredictableTime}
		\Phi_{\tau-}\1_{\{\tau \le \shs_n\}}
                = \phi(\tau, X_{\tau-})\1_{\{\tau \le \shs_n\}} \quad \P\text{-a.s.}
	\end{equation}
        To prove what precedes,
     on the one hand, taking the limit $t \rightarrow (\tau \wedge \shs_n)-$ 
	 in \eqref{eq:stopping} yields, 
	\begin{equation}
		\label{eq:predIndis2}
		\Phi_{(\tau \wedge \shs_n)-} = M[\phi]_{(\tau \wedge \shs_n)-} + \Phi_0
                + \int_0^{\tau \wedge \shs_n} a^\P(\phi)(r, X_r)dr.
	\end{equation}
	On the other hand,
        by 1.17, Chapter I in \cite{JacodShiryaev},
        with $A = \Omega, S = \shs_n, T = \tau,$ for all $n \ge 1$, we get
$\{\shs_n < \tau\}  \in  \shf_{\tau-},$ and therefore
      $$ \{\tau \le \shs_n\} \in \shf_{\tau-}, n \ge 1.$$
        Setting $X = M[\phi]^{\shs_n}$ in  Lemma 2.27, Chapter I in \cite{JacodShiryaev}, we have
        that
\begin{equation} \label{eq:MPhiTau}
  M[\phi]_{(\tau \wedge \shs_n)-} = \E^\P\left[M[\phi]_{\tau \wedge \shs_n} | \shf_{\tau-}\right] \ \text{on} \ \{\tau \le \shs_n\}.
\end{equation}
As $\int_0^{T \wedge \shs_n} |a^\P(\phi)(r, X_r)|dr \le n  \in L^1(\P)$,
by \eqref{eq:MPhiTau} and \eqref{eq:stopping} evaluated at $t = \tau \wedge \shs_n$  we get
	\begin{eqnarray*}
 M[\phi]_{(\tau \wedge \shs_n)-} &=& \E^\P\left[M[\phi]_{\tau \wedge \shs_n} | \shf_{\tau-}\right]   \\
  &=&      \E^\P\left[\Phi_{\tau \wedge \shs_n} | \shf_{\tau-}\right] - \phi(0, X_0) - \int_0^{\tau \wedge \shs_n}a^\P(\phi)(r, X_r)dr \quad \text{on} \quad \{\tau \le \shs_n\}.
	\end{eqnarray*}
	Replacing  $M[\phi]_{(\tau \wedge \shs_n)-}$  in \eqref{eq:predIndis2} we get
	\begin{equation*}
		\Phi_{(\tau \wedge \shs_n)-} = \E^\P\left[\Phi_{\tau \wedge \shs_n}  | \shf_{\tau-}\right] \quad \text{on} \quad \{\tau \le \shs_n\},
	\end{equation*}    
	that is
	\begin{equation}
		\label{eq:predIndis2bis}
		\Phi_{\tau-}\1_{\{\tau \le \shs_n\}} = \E^\P\left[\phi(\tau, X_\tau) | \shf_{\tau-}\right]\1_{\{\tau \le \shs_n\}} \quad \P\text{-a.s.}
	\end{equation}
        Now 	Hypothesis \ref{hyp:compensator} implies that $\nu^{X, \P}(X, \{t\} \times \R^d) = 0$ identically, so by   Corollary 1.19, Chapter II in \cite{JacodShiryaev}, the process $X$ is quasi-left continuous under $\P$ in the sense of Definition \ref{def:quasiLeftContinuous} and we have $\Delta X_\tau = 0$ 
on $\{\tau < +\infty\}$ $\P$-a.s.
 Hence $\phi(\tau, X_\tau) = \phi(\tau, X_{\tau-})$ $\P$-a.s. Moreover, since $\tau$ is $\shf_{\tau-}$-measurable by 1.14, Chapter I of \cite{JacodShiryaev}, then $\phi(\tau, X_{\tau-})$ is $\shf_{\tau-}$-measurable. Consequently  \eqref{eq:predIndis2bis} then yields
	\begin{equation}
		\label{eq:equalityPredictableTimeBis}
		\Phi_{\tau-}\1_{\{\tau \le \shs_n\}} = \phi(\tau, X_{\tau-}) \1_{\{\tau \le \shs_n\}}  \quad \P\text{-a.s.}
	\end{equation}
and therefore
\eqref{eq:equalityPredictableTime}.
This concludes the proof.
\end{proof}

\medskip

We continue with the proof of the aforementioned proposition.

\begin{proof}[Proof of Proposition \ref{prop:GammaContinuous}]
  Let $\phi \in \shd$.  Using the notation \eqref{eq:defMPhiP}, let
  $M[\phi]$ and $M[v]$ be the càdlàg local martingales, which belong to $\shh_{loc}^2(\P)$, also taking into account Proposition \ref{prop:MarkovMart}.
  Hence $[M[v], M[\phi]] \in \sha_{loc}(\P)$ by Proposition 4.51, Chapter I in \cite{JacodShiryaev}, which, taking into account
  \begin{equation} \label{eq:vpĥi}
  [M[v], M[\phi]] = [M[v]^c, M[\phi]^c] + [M[v]^d, M[\phi]^d],
\end{equation}
 yields $[M[v]^d, M[\phi]^d] \in \sha_{loc}(\P)$.
  
	By Theorem 4.52, Chapter I in \cite{JacodShiryaev}, we have
	\begin{equation}
		\label{eq:bracketMPhid}
		[M[v]^d, M[\phi]^d] = \sum_{0 < r \le \cdot}\Delta M[v]_r\Delta M[\phi]_r = \sum_{0 < r \le \cdot}\Delta V_r \Delta \Phi_r,
	\end{equation}
	where $V$ and $\Phi$ are c\`adl\`ag versions of $v(\cdot, X_\cdot)$ and $\phi(\cdot, X_\cdot)$ respectively. By Lemma \ref{lemma:predIndistinguable}, $\P$-a.s., for all $r \in [0, T]$, 
	$$
	\Delta V_r = V_r - V_{r-} = v(r, X_r) - v(r, X_{r-}) = v(r, X_{r-} + \Delta X_r) - v(r, X_{r-})
	$$
	and
	$$
	\Delta \Phi_r = \Phi_r - \Phi_{r-} = \phi(r, X_r) - \phi(r, X_{r-}) = \phi(r, X_{r-} + \Delta X_r) - \phi(r, X_{r-}).
	$$
        
        


        	Therefore equality \eqref{eq:bracketMPhid}
        gives
	\begin{eqnarray}
		\label{eq:bracketMPhid2}
		[M[v]^d, M[\phi]^d] &=& \sum_{0 < r \le \cdot}(v(r, X_{r-} + \Delta X_r) - v(r, X_{r-}))(\phi(r, X_{r-} + \Delta X_r) - \phi(r, X_{r-}))\\
                                     &=&      \int_{]0, \cdot] \times \R^d} W(r, X_{r-}, q)
                                         \mu^X(dr, dq)    
                                \nonumber
        \end{eqnarray}
        where the equality holds up to indistinguishability and $W$ was defined in \eqref{eq:martcomTer}.
	Since $[M[v]^d, M[\phi]^d] \in \sha_{loc}(\P)$, 
        $ \int_{]0, \cdot] \times \R^d}W(r, X_{r-}, q)\nu^{X, \P}(dr, dq) $ is well-defined and it also belongs to $\sha_{loc}(\P)$.
           This establishes the first item of the proposition.

           Moreover
	\begin{equation} \label{eq:martcomBis}
		\int_{]0, \cdot] \times \R^d}W(r, X_{r-}, q)\mu^X(dr, dq) - \int_{]0, \cdot] \times \R^d}W(r, X_{r-}, q)\nu^{X, \P}(dr, dq)
	\end{equation}
	is a local martingale. 
        Consequently  $[M[v]^d, M[\phi]^d] - \int_{]0, \cdot] \times \R^d}W(r, X_{r-}, q)\nu^{X, \P}(dr, dq)$ is a local martingale.
        Since $[M[v]^d, M[\phi]^d] - \int_{]0, \cdot] \times \R^d}W(r, X_{r-}, q)\nu^{X, \P}(dr, dq)$ is a local martingale
        and the process  $\int_{]0, \cdot] \times \R^d}W(r, X_{r-}, q)\nu^{X, \P}(dr, dq)$ is predictable, we have that
	\begin{equation}
		\label{eq:compensatorDiscontinuous}
		\langle M[v]^d, M[\phi]^d\rangle = \int_{]0, \cdot] \times \R^d}W(r, X_{r-}, q)\nu^{X, \P}(dr, dq) \quad \P\text{-a.s.}
              \end{equation}
Now \eqref{eq:vpĥi}
        implies
        $\langle M[v], M[\phi] \rangle = [M[v]^c, M[\phi]^c] + \langle M[v]^d, M[\phi]^d \rangle$.
        By \eqref{eq:bracketGamma}
        and \eqref{eq:compensatorDiscontinuous}, we have that
	$$
	\int_0^\cdot \Gamma^v(\phi)(r, X_r)dr = [ M[v]^c, M[\phi]^c ] + \int_{]0, \cdot] \times \R^d}W(r, X_{r-}, q)\nu^{X, \P}(dr, dq) \quad \P\text{-a.s.},
	$$
	which immediately yields
	$$
	\int_0^\cdot \Gamma^{v, c}(\phi)(r, X_r)dr = [ M[v]^c, M[\phi]^c ] \quad \P\text{-a.s.},
	$$
        where $\Gamma^{v,c}$ is the linear operator defined in \eqref{eq:decompGamma}.
	This concludes the proof of the second item.
\end{proof}

\subsection{Proof of Proposition \ref{prop:closureWeakDirichlet}}

\label{prop:closureWeakDirichletApp}

  Let $(\phi_n)_{n \ge 1}$ be a sequence of elements of $\shd$ such that $d_\scrd(\phi_n, \phi) \underset{n \rightarrow + \infty}{\longrightarrow} 0$. In particular, $d_c(\phi_n, \phi_m) \underset{n, m \rightarrow + \infty}{\longrightarrow} 0$, that is $[ M[\phi_n]^c - M[\phi_m]^c ]_T \overset{\P}{\underset{n, m \rightarrow + \infty}{\longrightarrow}} 0$.
We consider the unique special semimartingale decomposition 
	$$
	\phi_n(\cdot, X_\cdot) = M[\phi_n]^c + M[\phi_n]^d + \int_0^\cdot a^\P(\phi_n)(r, X_r)dr =: M[\phi_n]^c + A(\phi_n),
	$$
        where $M[\phi_n]^c$ (resp. $M[\phi_n]^d$) is a continuous (resp. purely discontinuous) local martingale.
        By Problem 5.25, Chapter 1 in \cite{karatshreve},
        the sequence $(M[\phi_n]^c)_{n \ge 1}$ is a Cauchy sequence in $\D^{ucp}$. Consequently there exists a continuous process $M$ such that $M[\phi_n]^c
        \underset{n \rightarrow + \infty}{\longrightarrow} M$ $\P$-u.c.p. Since the space of continuous $\P$-local martingales vanishing at $0$ is closed under u.c.p. convergence, $M$ is a continuous $\P$-local martingale.
Clearly $\phi_n(\cdot, X) \rightarrow \phi(\cdot, X_\cdot)$ $dt \otimes d\P$-a.e.

We set $A(\phi) := \phi(\cdot, X_\cdot) - M$.
Let $N$ be a continuous $\P$-local martingale.
We first observe that $[\phi(\cdot, X_\cdot),N]$ exists by item 3. of Definition \ref{def:closureDomain}
so that  $[A(\phi), N]$ exists. It remains to prove 
\begin{equation} \label{eq:APhiN}
  [A(\phi), N] = 0,
\end{equation}
so that
$\phi(\cdot, X_\cdot)$ is a weak Dirichlet process with decomposition $ M + A(\phi)$.
   For this purpose, we first observe that \eqref{eq:APhiN} holds for $\phi$ replaced by $\phi_n \in \shd$ since
    $A(\phi_n)$ has bounded variation.
    \eqref{eq:APhiN} is then a consequence of the continuity of the map
$\psi \mapsto [A(\psi),N]$,  from $\scrd$ to $\D^{ucp}$.
Indeed this holds because  item 1. of Remark \ref{rmk:equivalence}
implies that $\phi \mapsto [\phi(\cdot, X_\cdot),N]$ is continuous
and $\phi \mapsto [M[\phi]^c, N]$ is also continuous by Kunita-Watanabe inequality,
    taking into account that $d_c < d_\scrd$.

  \subsection{Proof of Proposition \ref{prop:extensionGammaClosure}}

\label{prop:extensionGammaClosureApp}
  
  Since $d_\scrd$ is homogeneous, in order to prove the continuity extension property,
  it is enough to check the continuity of $\Gamma^{v, c}$ in $0$. Let then $(\phi_n)_{n \ge 1}$ be a sequence of elements of $\shd$ such that $d_\scrd(\phi_n, 0) \underset{n \rightarrow + \infty}{\longrightarrow} 0$. Previous convergence implies that $d_c(\phi_n, 0) \underset{n \rightarrow + \infty}{\longrightarrow} 0$, hence $[M[\phi_n]^c]_T \overset{\P}{\underset{n \rightarrow + \infty}{\longrightarrow}} 0$ and up to a subsequence we can assume that

	\begin{equation}
		\label{eq:extensionGammaConvAs}
		[M[\phi_n]^c]_T \underset{n \rightarrow + \infty}{\longrightarrow} 0 \quad \P\text{-a.s.}
	\end{equation}
	By \eqref{eq:defGammaContinuous} in Proposition \ref{prop:GammaContinuous},
	\begin{equation}
		\label{eq:gammaVPhin}
		\int_0^\cdot\Gamma^{v, c}(\phi_n)(r, X_r)dr = [ M[v]^c, M[\phi_n]^c].
	\end{equation}
        By Kunita-Watanabe inequality, for all $0 \le s < t$,
	\begin{equation}
		\label{eq:kunitaWatanabe}
			|d[M[v]^c, M[\phi_n]^c]|(]s, t])  \le \sqrt{[M[v]^c]_t - [M[v]^c]_s }\sqrt{[M[\phi_n]^c]_t - [M[\phi_n]^c]_s},
	\end{equation}
	hence, by Cauchy-Schwarz inequality,
	\begin{equation}
		\label{eq:convTotalVariation}
		\begin{aligned}
			& \sup \left\{\sum_{i = 1}^p |d[M[v]^c, M[\phi_n]^c]|(]t_{i - 1}, t_i])\right\}\\
			\le &  \sqrt{\left \{\sum_{i = 1}^p ([M[v]^c]_{t_{i+1}} - [M[v]^c]_{t_i}) \right\}} \sqrt{\left\{\sum_{i = 1}^p ([M[\phi_n]^c]_{t_{i+1}} - [M[\phi_n]^c]_{t_i})\right \}}\\
			= & \sqrt{[M[v]^c]_T}\sqrt{[ M[\phi_n]^c]_T},
		\end{aligned}
	\end{equation}
	where the supremum is taken over all subdivisions $0 = t_0 < t_1 < \ldots < t_p = T$ of $[0, T]$. Inequality \eqref{eq:convTotalVariation} and convergence \eqref{eq:extensionGammaConvAs} then imply that $d[ M[v]^c, M[\phi_n]^c] \rightarrow 0$ in the total variation norm for signed measures on $[0, T]$. This immediately yields by \eqref{eq:gammaVPhin} that
	$$
	\int_0^T |\Gamma^{v, c}(\phi_n)|(r, X_r) \underset{n \rightarrow + \infty}{\longrightarrow} 0 \quad \P\text{-a.s.},
	$$
	hence $d_{L^0}(\phi_n, 0) \underset{n \rightarrow + \infty}{\longrightarrow} 0$ so that  $\Gamma^{v, c}$
       indeed extends continuously to $\scrd$

        Concerning \eqref{eq:defGammaContinuousExt}, let $\phi \in \scrd$ and
        $(\phi_n)$ converging to $\phi$ in $d_\scrd$.
For $t \in [0,T]$
        we write 
        \begin{eqnarray} \label{eq:SubsIneq}
          \left\vert  [ M[v]^c, M[\phi]^c]_t - \int_0^t\Gamma^{v, c}(\phi)(r, X_r)dr \right\vert 
          &\le & [ M[v]^c, M[\phi - \phi_n]^c]_t +   \left\vert [ M[v]^c, M[\phi_n]]^c]_t -   \int_0^t\Gamma^{v, c}(\phi_n)(r, X_r)dr\right\vert \nonumber \\
&+&    \int_0^t \vert \Gamma^{v, c}(\phi_n-\phi)\vert(r, X_r)dr.
\end{eqnarray}
Using Yamada-Watanabe inequality, the fact that $d_c + d_{L^0} < d_\scrd$,
taking into account \eqref{eq:defGammaContinuous} in Proposition \ref{prop:GammaContinuous}
and the continuity of $\Gamma^{v, c}$, 
we can take the limit on the right-hand side of \eqref{eq:SubsIneq},
which converges to  zero in probability.
       This concludes the proof of \eqref{eq:defGammaContinuousExt}.

\section{An integrability property of the intrinsic value
  function $v$
  defined in Definition \ref{definitionV}}

\setcounter{equation}{0}
\renewcommand\theequation{D.\arabic{equation}}
In this short section we state and prove a result regarding an integrability property of the functional $v(\cdot, \cdot + q)/v$ under $\Q$ with respect to the compensator $\nu^{X, \P}(X, dt, dq) = dtL(t, X_{t-}, dq)$.

\begin{lemma}
  \label{lemma:intCompensator}
  Let $k$ be a truncation function.
   Assume Hypothesis \ref{hyp:compensator}.
Then the function defined in \eqref{eq:intfv} belongs to
        $L^0(\P)$.

      \end{lemma}

\begin{proof}
We set $Y(t, X_{t-}, q) := \frac{v(t, X_{t-} + q)}{v(t, X_{t-})}$.
It is enough to prove
\begin{equation}\label{eq:finite-nu}
  \left(k\left\vert Y - 1 \right\vert\right)*\nu_T^{X, \P} < +\infty \  \P{\rm -a.s.}
  \end{equation}
  We will make use of the density process $D$
between $\Q $ and $\P$,
which was defined in Notation \ref{not:MartD}.
Without restriction of generality we can suppose here $k$ to be non-negative.
By definition of the compensator $\nu^{X, \P}$ of $\mu^X$ under $\P$, see Theorem 1.8, Chapter I,
in \cite{JacodShiryaev}.
Let $\tau$ be a fixed stopping time. We have
\begin{equation}
		\label{eq:intComp1}
		\begin{aligned}
                  \E^{\P}\left[\left(k D_{-}\left\vert Y - 1\right\vert\right)*\nu^{X, \P}_\tau\right] &  =
                  \E^{\P}\left[ \left(k D_{-}\left\vert Y - 1\right\vert\right)*\mu^X_\tau\right]\\
& = \E^{\P}\left[\sum_{0 < r \le \tau} D_{r-} k(\Delta X_r)\left\vert \frac{v(r, X_{r-} + \Delta X_r)}{v(r, X_{r-})} - 1 \right\vert\right].
		\end{aligned}
	\end{equation}
	Recall that $v \in \shd(\P)$ by Proposition \ref{prop:MarkovMart} item 2.,
keeping in mind Definition \ref{def:domainP}.
Then by Lemma \ref{lemma:predIndistinguable}, the processes $(v(t, X_{t-}))_{t \in [0, T]}$ and $(V_{t-})_{t \in [0, T]}$ are $\P$-indistinguishable, where $V$ is given by \eqref{eq:VD} in Notation \ref{not:MartD}, also taking into account
 Proposition \ref{prop:MarkovMart} item 1.
 Setting $C :=  1/V_0$, \eqref{eq:VD} yields
 the following two statements.
	\begin{enumerate}
        \item The processes $(D_{t-})_{t \in [0, T]}$ and
$\left(C\exp\left(-U^0_t\right)V_{t-})\right)_{t \in [0, T]}$
are $\P$-indistinguishable, where $U^0$ is defined by \eqref{eq:definitionU}.
	        \item          We have
                  $$ \Delta D_t  = \left(C\exp(-U^0_t)(V_t - V_{t-}) = C\exp(-U^0_t)(v(t, X_{t-} + \Delta X_t) - v(t, X_{t-}))\right),$$
   $ t \in [0,T]$,   where previous equalities hold in the sense of $\P$-indistinguishability.

	\end{enumerate} 
	Equality \eqref{eq:intComp1} then rewrites
	\begin{equation}
		\label{eq:intComp2}
		\begin{aligned}
	\E^{\P}\left[\left(D_{-}k\left\vert Y - 1\right\vert\right)*\nu^{X, \P}_\tau\right] & = \E^{\P}\left[\sum_{0 < r \le \tau} C\exp(-U^0_{r})v(r, X_{r-})k(\Delta X_r)\left\vert \frac{v(r, X_{r-} + \Delta X_r)}{v(r, X_{r-})} - 1 \right\vert\right]\\
			& = \E^{\P}\left[\sum_{0 < r \le \tau}k(\Delta X_r)\left\vert C\exp(-U^0_{r})\left(v(r, X_{r-} + \Delta X_r) - v(r, X_{r-})\right)\right\vert\right]\\
			& = \E^\P\left[\sum_{0 < r \le \tau}k(\Delta X_r)|\Delta D_r|\right],
		\end{aligned}
	\end{equation} 
	where we have used item 1. above for the first equality, and item 2. above for the third equality. We now consider the local martingale $M := k*(\mu^X - \nu^{X, \P})$, which is well-defined since $k \in \shg_{\rm loc}(\nu^{X, \P})$
        (that notation is borrowed from Definition 1.27, Chapter II
        of \cite{JacodShiryaev}) 
        taking into account Hypothesis \ref{hyp:compensator}.
	By definition the jumps of the local martingale $M$ are bounded by $\Vert k\Vert_\infty$. Hence by Lemma 2.6  in \cite{BandiniRusso_equivalence} and  Lemma 3.14, Chapter III
        in \cite{JacodShiryaev}, the process
        $[M, D] = \sum_{t \le \cdot} (\Delta M)_t (\Delta D)_t   = \sum_{t \le \cdot} k(\Delta X_t) \Delta D_t $ belongs to
        $\sha_{loc}(\P)$, which yields $\left(D_{-}k\left\vert Y - 1\right\vert\right)*\nu^{X, \P} \in \sha_{loc}(\P)$ by \eqref{eq:intComp2}. In particular, this implies that
        \begin{equation}
        	\label{eq:boundP}
        	\left(D_{-}k\left\vert Y - 1\right\vert\right)*\nu^{X, \P}_T = \int_0^TD_{t-}\left(\int_{\R^d}k(q)|Y(t, X_{t-}, q) - 1|L(t, X_{t-}, dq)\right)dt < + \infty,~\P\text{-a.s.}
        \end{equation}
        Since $\Q \ll \P$, previous inequality also holds $\Q$-a.s.
         By Proposition 3.5, Chapter III in \cite{JacodShiryaev} applied with $P = \P$ and $P' = \Q$, $D > 0$ $\Q$-a.s.
        we know that $\inf_{t \in [0,T]} D_{t-} > 0$, $\Q$-a.s.
        and therefore $\P$-a.s. since $\Q$ and $\P$ are equivalent.
       Consequently
        \begin{equation*}
        	\begin{aligned}
                  \left(k\left\vert Y - 1\right\vert\right)*\nu^{X, \P}_{T} &=
 \int_0^{T} \left(\int_{\R^d}k(q)|Y(t, X_{t-}, q) - 1|L(t, X_{t-}, dq)\right)dt\\
        		& = \int_0^{T} \frac{1}{D_{t-}}D_{t-}\left(\int_{\R^d}k(q)|Y(t, X_{t-}, q) - 1|L(t, X_{t-}, dq)\right)dt\\
    & \le \frac{1}{\inf_{t \in [0,T]} D_{t-}}  \int_0^TD_{t-}\left(\int_{\R^d}k(q)|Y(t, X_{t-}, q) - 1|L(t, X_{t-}, dq)\right)dt < +\infty, \ \P-{\rm a.s.}
        	\end{aligned}
        \end{equation*}
because of \eqref{eq:boundP}. This concludes the proof of \eqref{eq:finite-nu}.
\end{proof}

\section{Extension to mean-field optimization}
\setcounter{equation}{0}
\renewcommand\theequation{E.\arabic{equation}}
This short section is devoted to the proof of the equivalence between the optimization problems \eqref{eq:generalizedCharacterization} and \eqref{eq:linearizedProblem}.
\begin{lemma}
	\label{lemma:equivalenceOpti}
	Let $F : \R \rightarrow \R$ be a convex differentiable function. Let $\tilde \Q \in \shp(\Omega)$ be the solution to Problem \eqref{eq:generalizedCharacterization}. Then $\tilde \Q$ is solution to the linearized Problem \eqref{eq:linearizedProblem} with  $\tilde\vphi(X) := F'\left(\E^{\tilde \Q}[\vphi(X)]\right)\vphi(X)$.
\end{lemma}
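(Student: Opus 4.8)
The plan is to compare directly the two objective functionals, writing $J(\Q) := F(\E^{\Q}[\vphi(X)]) + H(\Q|\P)$ for the cost in Problem \eqref{eq:generalizedCharacterization} and $\tilde J(\Q) := \E^{\Q}[\tilde\vphi(X)] + H(\Q|\P) = F'(c)\,\E^{\Q}[\vphi(X)] + H(\Q|\P)$ for the cost in Problem \eqref{eq:linearizedProblem}, where $c := \E^{\Q^*}[\vphi(X)]$ is a \emph{fixed} finite constant (its finiteness is built into the definition of $\tilde\vphi$). Both implications will be pure convex-analysis manipulations using only that $\vphi \ge 0$ by Hypothesis \ref{hyp:costFunctions} (so $\E^{\Q}[\vphi(X)] \in [0,+\infty]$ is always meaningful), that $F$ is convex and differentiable, and that $\Q \mapsto H(\Q|\P)$ is convex (the case $\P_1 = \P_2 = \P$ of the joint convexity recalled in Remark \ref{rmk:relativeEntropy}). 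I will restrict to competitors with $\E^{\Q}[\vphi(X)] < +\infty$; the degenerate case $\E^{\Q}[\vphi(X)] = +\infty$ is handled by the convention that such $\Q$ are non-optimal for both problems (consistent, for instance, when $F$ is nondecreasing, which is the situation in the applications).

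First I would treat the easy direction ``$\Q^*$ solves \eqref{eq:linearizedProblem} $\Rightarrow$ $\Q^*$ solves \eqref{eq:generalizedCharacterization}''. Fixing $\Q$ and setting $m := \E^{\Q}[\vphi(X)]$, the tangent-line inequality for the convex differentiable function $F$ at $c$ gives $F(m) \ge F(c) + F'(c)(m-c)$, hence
\[
J(\Q) = F(m) + H(\Q|\P) \ge (F(c) - F'(c)c) + (F'(c)m + H(\Q|\P)) = (F(c) - F'(c)c) + \tilde J(\Q) \ge (F(c) - F'(c)c) + \tilde J(\Q^*) = J(\Q^*),
\]
where the last equality uses $\tilde J(\Q^*) = F'(c)c + H(\Q^*|\P)$. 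Thus $\Q^*$ minimises $J$.

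The converse ``$\Q^*$ solves \eqref{eq:generalizedCharacterization} $\Rightarrow$ $\Q^*$ solves \eqref{eq:linearizedProblem}'' is the part that needs an actual idea, because the single inequality $J(\Q) \ge J(\Q^*)$ does not suffice. The trick I would use is to test optimality of $\Q^*$ against the whole segment $\Q_\lambda := (1-\lambda)\Q^* + \lambda\Q$, $\lambda \in (0,1]$. Linearity of $\Q \mapsto \E^{\Q}[\vphi(X)]$ yields $\E^{\Q_\lambda}[\vphi(X)] = c + \lambda(m-c)$, and convexity of $H(\cdot|\P)$ yields $H(\Q_\lambda|\P) \le (1-\lambda)H(\Q^*|\P) + \lambda H(\Q|\P)$; inserting both into $J(\Q^*) \le J(\Q_\lambda)$ and simplifying leaves, for every $\lambda \in (0,1]$,
\[
H(\Q^*|\P) - H(\Q|\P) \le \frac{F(c + \lambda(m-c)) - F(c)}{\lambda}.
\]
Letting $\lambda \downarrow 0$, the right-hand side converges to $F'(c)(m-c)$ by differentiability of $F$ at $c$, and the resulting inequality rearranges exactly to $\tilde J(\Q^*) \le \tilde J(\Q)$. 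Since $\Q$ is arbitrary, $\Q^*$ solves \eqref{eq:linearizedProblem}.

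I expect the limiting step in this converse direction to be the only real obstacle: one must upgrade ``$\Q^*$ beats every $\Q_\lambda$'' to ``$\Q^*$ beats $\Q$ for the linearised cost'', and this is where the convexity of the relative entropy and the differentiability of $F$ are jointly indispensable — everything else is bookkeeping. A small point to dispatch cleanly is that $F(c)$ and $H(\Q^*|\P)$ are finite (the latter because $\Q^*$ is optimal with finite value, the former because $F$ is real-valued), so that the rearrangements above involve no $\infty - \infty$.
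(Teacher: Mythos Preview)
Your argument is correct and follows essentially the same route as the paper's proof: the forward direction via the tangent-line inequality $F(m)\ge F(c)+F'(c)(m-c)$, and the converse by testing optimality along the segment $\Q_\lambda=(1-\lambda)\Q^*+\lambda\Q$, using convexity of $H(\cdot|\P)$, dividing by $\lambda$, and letting $\lambda\downarrow 0$. Your write-up is in fact slightly more explicit than the paper's about the finiteness issues (the paper does not discuss the case $\E^{\Q}[\vphi(X)]=+\infty$ at all).
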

\begin{proof}

  Let $\lambda \in [0, 1]$. By definition, for all $\Q \in \shp(\Omega)$,
	$$
	F\left(\E^{\lambda \Q + (1 - \lambda)\tilde \Q}[\vphi(X)]\right) + H(\lambda \Q + (1 - \lambda)\tilde \Q | \P) - F\left(\E^{\tilde \Q}[\vphi(X)]\right) - H(\tilde \Q | \P) \ge 0,
	$$
	that is
	\begin{equation}
		\label{eq:linInter1}
		F\left(\lambda \E^\Q[\vphi(X)] + (1 - \lambda)\E^{\tilde \Q}[\vphi(X)]\right) - F\left(\E^{\tilde \Q}[\vphi(X)]\right) + H(\lambda \Q + (1 - \lambda)\tilde \Q | \P) - H(\tilde \Q | \P) \ge 0.
	\end{equation}
	By the convexity of the relative entropy, see Remark \ref{rmk:relativeEntropy} item 1., we have
	\begin{equation}
		\label{eq:convexH}
		H(\lambda \Q + (1 - \lambda)\tilde \Q | \P) - H(\tilde \Q | \P) \le \lambda(H(\Q | \P) - H(\tilde \Q | \P)).
	\end{equation}
	Combining \eqref{eq:linInter1} and \eqref{eq:convexH} and dividing by $\lambda$ we get
	\begin{equation*}
		\frac{1}{\lambda}\left(F\left(\lambda \E^\Q[\vphi(X)] + (1 - \lambda)\E^{\tilde \Q}[\vphi(X)]\right) - F\left(\E^{\tilde \Q}[\vphi(X)]\right) \right) + H(\Q | \P) - H(\tilde \Q | \P) \ge 0,
	\end{equation*}
	and letting $\lambda \rightarrow 0$ yields
	\begin{equation*}
		F'\left(\E^{\tilde \Q}[\vphi(X)]\right)(\E^\Q[\vphi(X)] - \E^{\tilde \Q}[\vphi(X)]) + H(\Q | \P) - H(\tilde \Q | \P) \ge 0,
	\end{equation*}
	which rewrites
	\begin{equation*}
		F'\left(\E^{\tilde \Q}[\vphi(X)]\right)\E^\Q[\vphi(X)] + H(\Q | \P) \ge F'\left(\E^{\tilde \Q}[\vphi(X)]\right)\E^{\tilde \Q}[\vphi(X)] + H(\tilde \Q | \P).
	\end{equation*}
	We conclude from the previous inequality that $\tilde \Q$ is a solution of Problem \eqref{eq:linearizedProblem}.
\end{proof}

\section*{Acknowledgments}

The research of the first named author is supported by a doctoral fellowship
PRPhD 2021 of the Région Île-de-France.
The research of the second and third named authors was partially
supported by the  ANR-22-CE40-0015-01 project (SDAIM).
The authors are grateful to the anonymous Referee for the careful reading
which has stimulated them to considerably improve the first submitted
version.

\bibliographystyle{plain}
\bibliography{../../../../../../BIBLIO_FILE/ThesisBourdais}


%
\end{document}